\newtheorem{theorem}{Theorem}[section]
\newtheorem{corollary}[theorem]{Corollary}
\newtheorem{lemma}[theorem]{Lemma}
\newtheorem{proposition}[theorem]{Proposition}
\newtheorem{remark}[theorem]{Remark}
\numberwithin{equation}{section}
\begin{document}
\title{Mass-conserving self-similar solutions to coagulation-fragmentation equations} 
\author{Philippe Lauren\c{c}ot}
\address{Institut de Math\'ematiques de Toulouse, UMR~5219, Universit\'e de Toulouse, CNRS \\ F--31062 Toulouse Cedex 9, France}
\email{laurenco@math.univ-toulouse.fr}

\keywords{coagulation, fragmentation, self-similarity, mass conservation}
\subjclass{45K05}

\date{\today}

\begin{abstract}
Existence of mass-conserving self-similar solutions with a sufficiently small total mass is proved for a specific class of homogeneous coagulation and fragmentation coefficients. The proof combines a dynamical approach to construct such solutions for a regularised coagulation-fragmentation equation in scaling variables and a compactness method.
\end{abstract}

\maketitle

%
%
\pagestyle{myheadings}
\markboth{\sc{Philippe Lauren\c cot}}{\sc{Mass-conserving self-similar solutions to C-F equations}}

\section{Introduction}\label{sec1}

Coagulation-fragmentation equations are mean-field models describing the time evolution of the size distribution function $f$ of a system of particles varying their sizes due to the combined effect of binary coalescence and multiple breakage. The dynamics of the size distribution function $f(t,x)$ of particles of size $x\in (0,\infty)$ at time $t>0$ is governed by the nonlinear integral equation
\begin{subequations}\label{a1}
\begin{align}
\partial_t f(t,x) & = \mathcal{C}f(t,x) + \mathcal{F}f(t,x)\ , \qquad (t,x) \in (0,\infty)^2\ , \label{a1a} \\
f(0,x) & = f^{in}(x)\ , \qquad x\in (0,\infty)\ , \label{a1d}
\end{align}
where
\begin{equation}
\mathcal{C}f(x) := \frac{1}{2} \int_0^x K(y,x-y) f(x-y) f(y)\ \mathrm{d}y - \int_0^\infty K(x,y) f(x) f(y)\ \mathrm{d}y\ , \qquad x\in (0,\infty)\ , \label{a1b}
\end{equation}
and 
\begin{equation}
\mathcal{F}f(x) := - a(x) f(x) + \int_x^\infty a(y) b(x,y) f(y)\ \mathrm{d}y\ , \qquad x\in (0,\infty)\ , \label{a1c}
\end{equation}
\end{subequations}
account for the coagulation and fragmentation processes, respectively. In \eqref{a1b}, the coagulation kernel $K$ is a non-negative and symmetric function defined on $(0,\infty)^2$ and $K(x,y)=K(y,x)$ is the rate at which two particles of respective sizes $x$ and $y$ collide and merge. In \eqref{a1c}, $a(x)$ is the overall fragmentation rate of particles of size $x$ and the distribution of the sizes of fragments resulting from the splitting of a particle of size $y$ is the daughter distribution function $x\mapsto b(x,y)$. Since we discard the possibility of loss of matter during breakup, $b$ is assumed to satisfy
\begin{equation}
\int_0^y x b(x,y)\ \mathrm{d}x = y\ , \qquad y>0\ , \;\;\text{ and }\;\; b(x,y)= 0 \ , \qquad x>y>0\ ; \label{a2}
\end{equation}
that is, the fragmentation of a particle of size $y$ only produces particles of smaller sizes and no matter is lost. Coagulation being also a mass-conserving process, we expect that matter is conserved throughout time evolution; that is,
\begin{equation}
M_1(f(t)) := \int_0^\infty x f(t,x)\ \mathrm{d}x = \varrho = M_1(f^{in}) := \int_0^\infty x f^{in}(x)\ \mathrm{d}x\ , \qquad t\ge 0\ . \label{a3}
\end{equation}
Breakdown in finite time of the identity \eqref{a3} may actually occur; that is, there is $T_l\in [0,\infty)$ such that
\begin{equation*}
M_1(f(t)) < M_1(f^{in})\ , \qquad t>T_l\ . 
\end{equation*} 
This feature is due, either to a runaway growth generated by a coagulation kernel increasing rapidly for large sizes, a phenomenon known as \textit{gelation} \cite{Leyv83, Leyv03, LeTs81}, or to the appearance of dust resulting from an overall fragmentation rate $a$ which is unbounded as $x\to 0$, a phenomenon referred to as \textit{shattering} \cite{Fili61, McZi87}. Loosely speaking, for the coagulation and fragmentation coefficients given by
\begin{subequations}\label{a4}
\begin{equation}
K(x,y) = K_0 \left( x^\alpha y^{\lambda-\alpha} + x^{\lambda-\alpha} y^\alpha \right)\ , \qquad (x,y)\in (0,\infty)^2\ , \label{a4a}
\end{equation}
with $\alpha\in [0,1]$, $\lambda\in [2\alpha,1+\alpha]$, and $K_0 > 0$, and
\begin{equation}
a(x) = a_0 x^\gamma\ , \quad b(x,y) = b_\nu(x,y) := (\nu+2) x^\nu y^{-\nu-1}\ , \qquad 0<x<y\ , \label{a4b}
\end{equation}
\end{subequations}
with $\gamma\in\mathbb{R}$, $\nu\in (-2,\infty)$, and $a_0> 0$, gelation after a finite time occurs when $\alpha>1/2$ in \eqref{a4a} and $\gamma\in (0,\lambda-1)$ in \eqref{a4b} \cite{EMP02, ELMP03, Jeon98, Laur00, Leyv83, LeTs81}, while shattering is observed when $\gamma<0$ in \eqref{a4b} and there is no coagulation $(K_0=0)$ \cite{Bana06, Fili61, McZi87}. In contrast, mass-conserving solutions to \eqref{a1} satisfying \eqref{a3} for all $t\ge 0$ exist when, either $\lambda\in [0,1]$ and $\gamma\ge 0$, or $\lambda\in (1,2]$ and $\gamma>\lambda-1$ \cite{BaCa90, BaLa11, BaLa12a, BLL13, BLLxx, daCo95, ELMP03, EMRR05, Laur18, LaMi02b, Stew89, Stew91, Whit80}. The previous discussion reveals that the value $\gamma=\lambda-1>0$ is a borderline case with respect to the occurrence of the gelation phenomenon. Indeed, on the one hand, when $\lambda\in (1,2]$, $\gamma=\lambda-1$, and $\alpha>-\nu-1$ in \eqref{a4}, mass-conserving solutions to \eqref{a1} on $[0,\infty)$ exist when $M_1(f^{in})$ is sufficiently small \cite{LaurXX}, which is in accordance with numerical simulations performed in \cite{Pisk12b} for the particular choice 
\begin{equation}
\alpha=1\ , \qquad \lambda=2\ , \qquad \gamma=1\ , \qquad \nu=0\ . \label{ex1}
\end{equation}
On the other hand, gelation (in finite time) takes place when $\alpha=1$, $\lambda=2$, $\gamma=1$, $\nu>-1$,  and $M_1(f^{in})$ is large enough \cite{BLLxx, Pisk12b, ViZi89}.

Besides, the choice $\gamma=\lambda-1>0$ in \eqref{a4} has another interesting feature. Indeed, in this case, equation~\eqref{a1a} satisfies a scale invariance which complies with the conservation of matter \eqref{a3}. More precisely, if $f$ is a solution to \eqref{a1a} and $r>0$, then the function $f_r$ defined by
\begin{equation}
f_r(t,x) := r^2 f(r^{1-\lambda}t,r x)\ , \qquad (t,x)\in [0,\infty) \times (0,\infty)\ , \label{a5}
\end{equation}
is also a solution to \eqref{a1a} and $M_1(f_r(t))=M_1(f(r^{1-\lambda}t))$ for $t\ge 0$. We then look for particular solutions to \eqref{a1a} which are left invariant by the transformation \eqref{a5}, that is, $f_r=f$ for all $r>0$; that is, according to \eqref{a5}, $r^2 f(r^{1-\lambda}t,rx) = f(t,x)$ for all $(r,t,x)\in (0,\infty)^3$. The choice $r=t^{1/(\lambda-1)}$ in the previous identity gives 
\begin{equation*}
f(t,x) = t^{2/(\lambda-1)} f\left( 1,x t^{1/(\lambda-1)} \right)\ , \qquad (t,x)\in (0,\infty)^2\ ,
\end{equation*} 
and raises the question of the existence of mass-conserving self-similar solutions of the form
\begin{equation}
(t,x) \longmapsto t^{2/(\lambda-1)} \psi\left( x t^{1/(\lambda-1)} \right)\ , \qquad (t,x)\in (0,\infty)^2\ . \label{a6}
\end{equation}
In \eqref{a6}, the profile $\psi$ is yet to be determined and is requested to have a finite total mass $M_1(\psi)=\varrho\in (0,\infty)$. According to the numerical simulations performed in \cite{Pisk12b}, such solutions exist for sufficiently small values of $\varrho$ and are expected to describe the long term dynamics of mass-conserving solutions to \eqref{a1} with the same total mass $\varrho$. Thus,  the existence, uniqueness, and properties of mass-conserving self-similar solutions to \eqref{a1a} of the form \eqref{a6} are of high interest. 

The purpose of this paper is to provide one step in that direction and figure out whether self-similar solutions to \eqref{a1a} of the form \eqref{a5} do exist when $\gamma=\lambda-1>0$ in \eqref{a4}. Such a quest is not hopeless. Indeed, on the one hand, when the parameters in \eqref{a4} are given by \eqref{ex1}, their existence is supported by numerical simulations performed in \cite{Pisk12b}, which indicate that there exist mass-conserving self-similar solutions to \eqref{a1a} of the form \eqref{a6} with $M_1(\psi)=\varrho$, provided the ratio $a_0/(\varrho K_0)$ is large enough. On the other hand, if
\begin{equation}
\alpha=1\ , \qquad \lambda=2\ , \qquad \gamma=1\ , \qquad \nu=-1\ , \label{ex2}
\end{equation}
then, for any $\varrho>0$, the existence of a unique mass-conserving self-similar solution to \eqref{a1a} of the form \eqref{a6} with $M_1(\psi)=\varrho$ is shown in \cite{LavR15} and this particular solution is a global attractor for the dynamics of \eqref{a1} when the initial condition $f^{in}$ satisfies $M_1(f^{in})=\varrho$. The approach developed in \cite{LavR15} heavily relies on the specific structure of \eqref{a1a} for the choice of parameters \eqref{ex2}, which allows us to use the Laplace transform, and is thus not likely to be adapted to the more general setting considered herein. Instead, we first construct mass-conserving self-similar solution to \eqref{a1a} of the form \eqref{a6} for a restricted class of daughter distribution functions $b$ by a dynamical approach and carefully keep track of the dependence of the estimates on the various parameters involved in $K$, $a$, and $b$. We next use a compactness method to extend the existence result to a broader class of $b$. 

Specifically, we consider
\begin{subequations}\label{aaCFC}
\begin{equation}
\lambda \in (1,2]\ , \quad \gamma:= \lambda-1 \in (0,1]\ , \quad \alpha\in \left[ \max\left\{ \frac{1}{2} , \lambda-1 \right\} , \frac{\lambda}{2} \right]\ , \label{aa1}
\end{equation}
and assume that the overall fragmentation rate $a$ and the coagulation kernel $K$ are given by 
\begin{eqnarray}
a(x) & = & a_0 x^{\lambda-1}\ , \qquad x\in (0,\infty)\ , \label{aa2} \\
K(x,y) & = & K_0 \left( x^\alpha y^{\lambda-\alpha} + x^{\lambda-\alpha} y^\alpha \right)\ , \qquad (x,y)\in (0,\infty)^2\ , \label{aa3}
\end{eqnarray}
for some positive constants $a_0$ and $K_0$. We assume further that the daughter distribution function $b$ has the scaling form
\begin{equation}
b(x,y) = \frac{1}{y} B\left( \frac{x}{y} \right)\ , \qquad 0<x<y\ , \label{aa4}
\end{equation}
where
\begin{equation}
B\ge 0 \;\text{ a.e. in }\; (0,1)\ , \quad B\in L^1((0,1),z\mathrm{d}z)\ , \quad \int_0^1 zB(z)\ \mathrm{d}z =1\ , \label{aa5}
\end{equation}
and there is $\nu\in (-2,0]$ such that
\begin{equation}
\mathfrak{b}_{m,p} := \int_0^1 z^m B(z)^p\ \mathrm{d}z < \infty   \label{aa6}
\end{equation}
for all $(m,p)\in\mathcal{A}_\nu$, the set $\mathcal{A}_\nu$ being defined by
\begin{equation}
\mathcal{A}_\nu :=\{ (m,p)\in (-1,\infty)\times [1,\infty)\ :\ m+p\nu>-1\}\ . \label{aa7}
\end{equation}
\end{subequations}
Observe that $\mathcal{A}_\nu$ is non-empty since 
\begin{subequations}\label{aa12}
\begin{equation}
(m,1)\in \mathcal{A}_\nu \;\;\text{ for all }\;\; m>-\nu-1\ . \label{aa12a}
\end{equation}
Also, if $(m,1)\in \mathcal{A}_\nu$, then
\begin{equation}
(m,p)\in \mathcal{A}_\nu \;\;\text{ for all }\;\; p\in \left[ 1 , \frac{m+1}{|\nu|} \right)\ . \label{aa12b}
\end{equation}
\end{subequations}
We finally assume that the small size behaviour of the coagulation kernel $K$ is related to the possible singularity of $B$ for small sizes and require
\begin{equation}
-\nu-1 < \alpha\ . \label{aa8}
\end{equation}
Since $(-\nu/2,1]\in \mathcal{A}_\nu$ by \eqref{aa12}, we infer from \eqref{aa6} and the inequality
\begin{equation*}
\int_0^1 z |\ln z| B(z)\ \mathrm{d}z \le \sup_{z\in (0,1)}\left\{ z^{(2+\nu)/2} |\ln{z}| \right\} \int_0^1 z^{-\nu/2} B(z)\ \mathrm{d}z = \frac{2\mathfrak{b}_{-\nu/2,1}}{e(\nu+2)}\ ,
\end{equation*} 
that
\begin{equation}
\mathfrak{b}_{\ln} := \int_0^1 z |\ln z| B(z)\ \mathrm{d}z < \infty\ . \label{aa9}
\end{equation}
We then set
\begin{equation}
\varrho_\star := \frac{a_0 \mathfrak{b}_{\ln}}{2 K_0 \ln{2}}\ . \label{aa10}
\end{equation}

For $m\in\mathbb{R}$, we define the weighted $L^1$-space $X_m$ and the moment $M_m(h)$ of order $m$ of $h\in X_m$ by
\begin{equation*}
X_m := L^1((0,\infty),x^m\mathrm{d}x)\ , \qquad M_m(h) := \int_0^\infty x^m h(x)\ \mathrm{d}x\ .
\end{equation*}
We also denote the positive cone of $X_m$ by $X_m^+$, while $X_{m,w}$ denotes the space $X_m$ endowed with its weak topology.

For the above described class of coagulation and fragmentation coefficients, the main result of this paper guarantees the existence of at least one mass-conserving self-similar solution to \eqref{a1a} of the form \eqref{a6} (up to a rescaling, see Remark~\ref{rema2} below) with a sufficiently small total mass $\varrho$.

\begin{theorem}\label{ThmP2}
Consider coagulation and fragmentation coefficients $K$, $a$, and $b$ satisfying \eqref{aaCFC} and fix two auxiliary parameters
\begin{equation}
m_0 \in (-\nu-1,\alpha) \cap [0,1)\ , \qquad m_1 := \max\{m_0,2-\lambda\}\in (0,1)\ . \label{aa11}
\end{equation}
Let $\varrho\in (0,\varrho_\star)$.
\begin{itemize}
\item[(a)] There are $q_1\in (1,2)$ (defined in \eqref{p1} below) and a non-negative profile 
\begin{equation}
\varphi\in X_1^+\cap L^{q_1}((0,\infty),x^{m_1}\mathrm{d}x) \cap \bigcap_{m\ge m_0} X_m\ , \qquad M_1(\varphi) = \varrho\ , \label{a6a}
\end{equation}
such that $(m_1,q_1)\in\mathcal{A}_\nu$ and
\begin{align}
\int_0^\infty \left[ \vartheta(x) - x \partial_x\vartheta(x) \right] \varphi(x)\ \mathrm{d}x & = \frac{1}{2} \int_0^\infty \int_0^\infty K(x,y) \chi_\vartheta(x,y) \varphi(x) \varphi(y)\ \mathrm{d}y\mathrm{d}x \nonumber \\
& \qquad - \int_0^\infty a(y) N_{\vartheta}(y) \varphi(y)\ \mathrm{d}y \label{x8}
\end{align}
for all $\vartheta\in \Theta_1$, where
\begin{equation}
\Theta_1 := \left\{ \vartheta\in W^{1,\infty}(0,\infty)\ :\ \vartheta(0)=0 \right\} \ , \label{a6c}
\end{equation}
and
\begin{align}
\chi_\vartheta(x,y) & := \vartheta(x+y) -\vartheta(x) - \vartheta(y)\ , \qquad (x,y)\in (0,\infty)^2\ , \label{bchi}\\
N_\vartheta(y) & := \vartheta(y) - \int_0^y \vartheta(x) b(x,y)\ \mathrm{d}x\ , \qquad y\in (0,\infty)\ . \label{bN}
\end{align}
\item[(b)] The function $F_S$ defined by
\begin{equation}
F_S(t,x) := s_\lambda(t)^2 \varphi\left( x s_\lambda(t) \right)\ , \qquad (t,x)\in [0,\infty)\times (0,\infty)\ , \label{a6b}
\end{equation}
with $s_\lambda(t) := (1+(\lambda-1)t)^{1/(\lambda-1)}$, $t\ge 0$, is a mass-conserving weak solution to \eqref{a1} on $[0,\infty)$ with initial condition $f^{in}=\varphi$ in the following sense: for any $T>0$,
\begin{equation*}
F_S\in C([0,T],X_{m_1,w}) \cap C([0,T],X_{1,w}) \cap L^\infty((0,T),X_{m_0}) 
\end{equation*} 
and satisfies
\begin{align}
\int_0^\infty (F_S(t,x)-\varphi(x)) \vartheta(x)\ \mathrm{d}x & = \frac{1}{2} \int_0^t \int_0^\infty \int_0^\infty K(x,y) \chi_\vartheta(x,y) F_S(s,x) F_S(s,y)\ \mathrm{d}y\mathrm{d}x\mathrm{d}s \nonumber\\
& \qquad - \int_0^t \int_0^\infty a(x) N_\vartheta(x) F_S(s,x)\ \mathrm{d}x\mathrm{d}s\ , \label{b0}
\end{align}
 for all $t\in (0,\infty)$ and $\vartheta\in\Theta_{m_1}$, where $\Theta_0:= L^\infty(0,\infty)$ and
\begin{equation*}
\Theta_m := \left\{ \vartheta\in C^m([0,\infty))\cap L^\infty(0,\infty)\ :\ \vartheta(0) = 0 \right\}\ , \qquad m\in (0,1)\ .
\end{equation*} 
\end{itemize}
\end{theorem}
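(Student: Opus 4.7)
The plan is to work in self-similar variables and construct the profile $\varphi$ as a stationary solution of the rescaled coagulation-fragmentation equation. Setting $\sigma := \ln s_\lambda(t)$ and $g(\sigma,y) := e^{-2\sigma} f(t, y e^{-\sigma})$, a direct computation based on the homogeneity of $K$ and $a$ transforms \eqref{a1a} into
\[
\partial_\sigma g + \partial_y(yg) + g = \mathcal{C}g + \mathcal{F}g,
\]
whose stationary solutions are exactly the profiles for which \eqref{x8} holds (after integrating the drift term by parts against $\vartheta$). Following the two-step outline in the introduction, I would first handle a restricted class of daughter distributions --- for instance, $B$ bounded, corresponding in particular to $\nu=0$ --- by a dynamical approach while carefully tracking the dependence of the estimates on $a_0$, $K_0$ and the moments of $B$, and then extend to the full class \eqref{aa5}--\eqref{aa6} by an approximation-compactness argument.

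For the restricted problem I would use a Tykhonov--Schauder fixed point argument. First, build a Cauchy theory for the rescaled equation in $X_1^+$: given initial data $g^{in}\in X_1^+$ with $M_1(g^{in})=\varrho$, produce a unique global mass-conserving weak solution $\sigma\mapsto g(\sigma)$ that depends continuously on $g^{in}$ in the weak topology. Next, assemble a non-empty convex subset $\mathcal{Z}\subset X_1^+\cap L^{q_1}((0,\infty),x^{m_1}\mathrm{d}x)$ of functions with mass $\varrho$, a uniform $L^{q_1}$-bound with weight $x^{m_1}$, and uniform bounds on the moments $M_m$ for $m\in\{m_0\}\cup[2,\infty)$; show that $\mathcal{Z}$ is sequentially compact in $X_{1,w}$ via de la Vall\'ee Poussin/Dunford--Pettis and is left invariant by the time-one map $g^{in}\mapsto g(1)$ of the rescaled semi-flow. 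Tykhonov's fixed point theorem applied on $\mathcal{Z}$ then yields a stationary profile $\varphi\in\mathcal{Z}$ satisfying \eqref{x8}. The smallness assumption $\varrho<\varrho_\star$ is critical in proving invariance of $\mathcal{Z}$: testing the rescaled equation against the weight $x\ln x$ produces a coercive fragmentation contribution of size $a_0\mathfrak{b}_{\ln}M_1(g)$ that dominates a coagulation contribution controlled by $2K_0(\ln 2)M_1(g)^2$ exactly when $M_1(g)<\varrho_\star$; this is what allows the moment and $L^{q_1}$ bounds to close uniformly in $\sigma$.

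To pass to a general $B$ satisfying \eqref{aa5}--\eqref{aa6}, I would approximate $B$ by a sequence $(B_k)$ in the restricted class, apply the previous step to obtain profiles $\varphi_k$ satisfying \eqref{x8} with $B_k$ in place of $B$ together with $k$-uniform moment and $L^{q_1}$ estimates, extract a subsequence converging weakly in $X_{1,w}\cap L^{q_1}((0,\infty),x^{m_1}\mathrm{d}x)$ by Dunford--Pettis, and pass to the limit in \eqref{x8}. The tail control coming from the $M_m$ estimates with $m\ge 2$, and the origin control coming from $M_{m_0}$, prevent loss of mass, so that $M_1(\varphi)=\varrho$. Part (b) is then essentially automatic: defining $F_S$ by \eqref{a6b}, the reverse of the scaling computation that produced the rescaled equation combined with \eqref{x8} yields \eqref{b0}, and the weak continuity statements for $F_S$ follow from the smoothness of $s_\lambda$ together with the integrability properties of $\varphi$ recorded in \eqref{a6a}.

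The main technical obstacle is closing the weighted $L^{q_1}$ estimate with weight $x^{m_1}$, which is what controls the coagulation bilinear term at small sizes and rules out concentration of mass at the origin, both in the fixed point step and when passing to the weak limit. The compatibility condition \eqref{aa8} between the small-size singularities of $K$ (via $\alpha$) and of $B$ (via $\nu$) is precisely what makes it possible to find an admissible pair $(m_1,q_1)\in\mathcal{A}_\nu$, and it is in closing this $L^{q_1}$-estimate uniformly in the approximation --- using in an essential way the precise algebraic role of $\varrho_\star$ to absorb the quadratic coagulation contribution into the linear fragmentation one --- that the heart of the argument lies.
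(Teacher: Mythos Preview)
Your overall strategy matches the paper's: pass to self-similar variables, show that the rescaled equation with a regularised daughter distribution leaves a closed convex set invariant, extract a stationary profile by a fixed-point argument, and then let the regularisation go. You have also correctly located the origin of $\varrho_\star$ in the $x\ln x$ moment computation. Three points deserve comment.

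\textbf{Time-one map versus steady state.} A fixed point of the time-one map $g^{in}\mapsto g(1)$ is a priori only a periodic point of period~$1$, not a stationary solution. The paper sidesteps this by invoking a dynamical-systems consequence of Schauder's theorem (Amann, Proposition~22.13, or the argument in Gamba--Panferov--Villani), which guarantees that a continuous semi-flow leaving a compact convex set invariant possesses a genuine rest point. Your formulation glosses over this; the usual repair is either to quote that result directly or to apply Schauder to each map $g^{in}\mapsto g(1/n)$ and extract a cluster point.

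\textbf{Strong versus weak compactness.} You propose to work in $X_{1,w}$ and use Dunford--Pettis. The paper instead works in the \emph{strong} topology of $X_1$: the regularisation is a mollification producing $B_\varepsilon\in W^{1,1}(0,1)$ with $B_\varepsilon\equiv 0$ near $0$, and the authors then derive an $\varepsilon$-dependent $W^{1,1}(0,\infty)$ bound on $g_\varepsilon(s,\cdot)$ (together with $\varepsilon$-dependent moments $M_{\lambda-2}$). This extra work buys norm compactness of the invariant set $\mathcal{Z}_\varepsilon$ and norm continuity of the flow, so that the fixed-point argument runs in a Banach space without having to verify continuity of the semi-flow in the weak topology. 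Your weak-topology route is in principle viable (the paper even cites EMRR05 for the weak-topology variant of the abstract result), but you would then owe a proof that the flow is sequentially continuous in $X_{1,w}$ on the invariant set.

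\textbf{Where $\varrho_\star$ actually enters.} The weighted $L^{q_1}$ estimate is not where the smallness condition is used. In the paper the coagulation contribution to $\tfrac{\mathrm{d}}{\mathrm{d}s}\int_0^\infty x^{m_1} g^{q_1}\,\mathrm{d}x$ is shown to be \emph{non-positive} by a monotonicity trick (using that $x\mapsto x^{m_1}K(x,y)$ is non-decreasing, together with Young's inequality), so no smallness is needed there. The condition $\varrho<\varrho_\star$ is used only in the combined $x\ln x$ / $M_{m_1}$ estimate, exactly as you describe in your second paragraph; the $L^{q_1}$ bound then closes because it feeds on moment bounds already available.
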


\begin{remark}\label{rema2}
The self-similar ansatz \eqref{a6} differs slightly from that of $F_S$ in Theorem~\ref{ThmP2}, see \eqref{a6b}. However, they can both be mapped to each other, up to an $X_1$-invariant dilation of the profile. Indeed, if $F_S(t,x) = s_\lambda(t)^2 \varphi\left( x s_\lambda(t) \right)$, $(t,x)\in (0,\infty)^2$, is a mass-conserving self-similar solution to \eqref{a1a} of the form \eqref{a6b}, then it is actually well-defined for $(t,x)\in (-1/(\lambda-1),\infty)\times (0,\infty)$. Combining this property with the autonomous character of the coagulation-fragmentation equation \eqref{a1a} implies that $\tilde{F}_S(t,x):= F_S(t-(\lambda-1)^{-1},x)$, $(t,x)\in (0,\infty)^2$, is also a solution to \eqref{a1a} and satisfies
\begin{equation*}
\tilde{F}_S(t,x) :=  t^{2/(\lambda-1)} \psi\left( x t^{1/(\lambda-1)} \right) \ , \qquad (t,x)\in [0,\infty)\times (0,\infty)\ ,
\end{equation*} 
with $\psi(y) = (\lambda-1)^{-2/(\lambda-1)} \varphi\left( y (\lambda-1)^{-1/(\lambda-1)} \right)$, $y>0$. In other words, $\tilde{F}_S$ is a mass-conserving self-similar solution to \eqref{a1a} of the form \eqref{a6} and it has total mass $\varrho$, since $M_1(\varphi)=M_1(\psi)=\varrho$ by \eqref{a6a}.
\end{remark}

On the one hand, Theorem~\ref{ThmP2} and Remark~\ref{rema2} provide the existence of mass-conserving self-similar solutions to \eqref{a1} of the form \eqref{a6} with a sufficiently small total mass for the parameters given by \eqref{ex1}, which is in perfect agreement with the numerical simulations performed in \cite{Pisk12b}. It is yet unclear whether $\varrho_\star$ is the largest value of $\varrho$ for which a mass-conserving self-similar solution to \eqref{a1} of the form \eqref{a6} with total mass $\varrho$ exists. However, Theorem~\ref{ThmP2} cannot be valid for any $\varrho>0$ in general. Indeed, when the parameters in \eqref{aaCFC} are given by \eqref{ex1}, gelation occurs for sufficiently large mass, as indicated by explicit computations performed in \cite{Pisk12b, ViZi89} and proved in \cite{BLLxx} when $a_0/(\varrho K_0)<1$. On the other hand, Theorem~\ref{ThmP2} provides the existence of mass-conserving self-similar solutions to \eqref{a1} of the form \eqref{a6} with a sufficiently small total mass for the parameters given by \eqref{ex2}, a result which is far from optimal, since such a  solution exists for any value of the total mass, according to \cite{LavR15}. A possible explanation for this discrepancy is that the absence of a threshold mass is due to the non-integrability as $x\to 0$ of the daughter distribution function $b_{-1}$, which is not really exploited in the proof of Theorem~\ref{ThmP2} below.

Let us now describe the approach we use in this paper to prove Theorem~\ref{ThmP2}. Owing to the homogeneity of $K$, $a$, and $B$, inserting the ansatz \eqref{a6b} in \eqref{a1a} implies that $\varphi$ solves the integro-differential equation
\begin{equation}
y \frac{\mathrm{d}\varphi}{\mathrm{d}y}(y) + 2 \varphi(y) = \mathcal{C}\varphi(y) + \mathcal{F}\varphi(y)\ , \qquad y \in (0,\infty)\ . \label{EqProfile1}
\end{equation}
Unfortunately, the equation~\eqref{EqProfile1} seems hardly tractable as an initial value problem with initial condition at $y=0$. Indeed, on the one hand, the right hand side of \eqref{EqProfile1} depends not only on the past $(0,y)$ of $y$ but also on its future $(y,\infty)$. On the other hand, the left hand side is degenerate, as the factor $y$ in front of $\mathrm{d}\varphi/\mathrm{d}y$ vanishes at $y=0$. Assuming further that $y^2\varphi(y)\to 0$ as $y\to 0$, one can get rid of the derivative in \eqref{EqProfile1} and show that $\varphi$ also satisfies the nonlinear integral equation
\begin{equation}
y^2 \varphi(y) = \int_y^\infty a(x) \varphi(x) \int_0^y x_* b(x_*,x)\ \mathrm{d}x_* \mathrm{d}x - \int_0^y \int_{y-x}^\infty x K(x,x_*) \varphi(x) \varphi(x_*)\ \mathrm{d}x_* \mathrm{d}x \label{EqProfile2}
\end{equation}
for $y\in (0,\infty)$, see \cite{Fili61, vDEr88}. It is however unclear whether this alternative formulation is more helpful than \eqref{EqProfile1} to investigate the existence issue, though it has been extensively used to determine the behaviour for small and large sizes of the profile of mass-conserving self-similar solutions to the coagulation equation \cite{FoLa06a, Leyv03, NiVe11a, NiVe14a, vDEr88}. We thus employ a different approach here, which has already proved successful for the coagulation equation \cite{EMRR05, FoLa05, NiVe14a} and the fragmentation equation \cite{EMRR05, MMP05}. It relies on the construction of a convex and compact subset of $X_1$ which is left invariant by the evolution equation associated to \eqref{EqProfile1}. This evolution equation is actually obtained from \eqref{a1} by using the so-called scaling or self-similar variables. More precisely, recalling that $s_\lambda(t) = (1+(\lambda-1)t)^{1/(\lambda-1)}$, $t\ge 0$, we introduce the scaling variables
\begin{equation*}
s := \ln{s_\lambda(t)}\ , \qquad y := x s_\lambda(t)\ , \qquad (t,x)\in [0,\infty)\times (0,\infty)\ ,
\end{equation*} 
and the rescaled size distribution function 
\begin{equation}
g(s,y) := e^{-2s} f\left( \frac{e^{(\lambda-1)s}-1}{\lambda-1} , y e^{-s} \right)\ , \qquad (s,y)\in  [0,\infty) \times (0,\infty)\ .  \label{a7}
\end{equation}
Equivalently,
\begin{equation}
f(t,x) = s_\lambda(t)^2 g\left( \ln{s_\lambda(t)} , x s_\lambda(t) \right)\ , \qquad (t,x)\in [0,\infty) \times (0,\infty)\ . \label{a8}
\end{equation}
Now, if $f$ is a solution to \eqref{a1}, then $g$ solves
\begin{subequations}\label{a9}
\begin{align}
\partial_s g(s,y) & = - y \partial_y g(s,y) - 2 g(s,y) + \mathcal{C}g(s,y) + \mathcal{F}g(s,y)\ , \qquad (s,y) \in (0,\infty)^2\ , \label{a9a} \\
g(0,y) & = f^{in}(y)\ , \qquad y\in (0,\infty)\ , \label{a9d}
\end{align}
\end{subequations}
Comparing \eqref{EqProfile1} and \eqref{a9a}, we readily see that $\varphi$ is a stationary solution to \eqref{a9a}, so that proving Theorem~\ref{ThmP2} amounts to find a steady-state solution to \eqref{a9a}. To this end, we shall use a consequence of Schauder's fixed point theorem which guarantees the existence of a steady state for a dynamical system defined in a closed subset $Y$ of a Banach space $X$ which leaves invariant a convex and compact subset of $Y$, see \cite[Proposition~22.13]{Aman90} and \cite[Proof of Theorem~5.2]{GPV04} (see also \cite[Theorem~1.2]{EMRR05} for the extension of this result to a Banach space endowed with its weak topology). Applying the just mentioned result requires identifying a suitable functional framework in which, not only \eqref{a9} is well-posed, but also leaves invariant a convex and compact subset of the chosen function space. To achieve this goal, the assumption \eqref{aa6} for any $(m,p)\in \mathcal{A}_\nu$ does not seem to be sufficient and we first construct a family $(b_\varepsilon,B_\varepsilon)_{\varepsilon\in (0,1)}$ of approximations of $(b,B)$, which satisfy not only \eqref{aa4} and \eqref{aa5}, but also \eqref{aa6} for any $(m,p)\in \mathcal{A}_0$ and $B_\varepsilon\in W^{1,1}(0,1)$. We then prove that the corresponding rescaled coagulation-fragmentation equation \eqref{a9} is well-posed in $X_1$ for initial conditions $f^{in}\in X_{m_0}^+\cap X_{1+\lambda}$ satisfying $M_1(f^{in})=\varrho\in (0,\varrho_\star)$. We also show the existence of an invariant convex and compact subset $\mathcal{Z}_\varepsilon$ of $X_1$ for the associated dynamical system. According to the above mentioned result, this analysis guarantees the existence of a stationary solution $\varphi_\varepsilon\in X_1^+$ to \eqref{a9a} satisfying $M_1(\varphi_\varepsilon)=\varrho$. Moreover, it turns out that there is a convex and sequentially weakly compact subset $\mathcal{Z}$ of $X_1$ such that $\mathcal{Z}_\varepsilon\subset \mathcal{Z}$ for all $\varepsilon\in (0,1)$. Consequently, $(\varphi_\varepsilon)_{\varepsilon\in (0,1)}$ is relatively sequentially weakly compact in $X_1$ and the information derived from $\mathcal{Z}$ allows us to prove that cluster points in $X_{1,w}$ of $(\varphi_\varepsilon)_{\varepsilon\in (0,1)}$ as $\varepsilon\to 0$ solve \eqref{EqProfile1}, thereby completing the proof of Theorem~\ref{ThmP2}.

\begin{remark}\label{remb}
In the companion paper \cite{LaurXX}, we prove that, given an initial condition $f^{in}\in X_{m_0}^+\cap X_{2\lambda-\alpha}$ satisfying $M_1(f^{in})=\varrho\in (0,\varrho_\star)$, the coagulation-fragmentation equation \eqref{a1} has a unique mass-conserving weak solution on $[0,\infty)$ under the same assumptions \eqref{aaCFC} on the coagulation and fragmentation coefficients. This result is perfectly consistent with the numerical simulations performed in \cite{Pisk12b}, as is Theorem~\ref{ThmP2}.
\end{remark}

\section{Self-similar solutions: a regularised problem} \label{sec2}

In this section, we assume that $K$, $a$, and $b$ are coagulation and fragmentation coefficients satisfying \eqref{aaCFC} and we fix $\varrho\in (0,\varrho_\star)$. 

As already mentioned, two steps are needed to prove Theorem~\ref{ThmP2} and this section is devoted to the first step; that is,  the proof of Theorem~\ref{ThmP2} for a family $(b_\varepsilon)_{\varepsilon>0}$ of approximations of the daughter distribution function $b$. We begin with the construction of a suitably regularised version of the daughter distribution function $b$. To this end, we fix a non-negative function $\zeta\in C_0^\infty(\mathbb{R})$ such that
\begin{equation*}
\int_{\mathbb{R}} \zeta(z)\ \mathrm{d}z = 1\ , \qquad \mathrm{supp}\; \zeta\subset (-1,1)\ ,
\end{equation*}
and set $\zeta_\varepsilon(z) := \varepsilon^{-2} \zeta(z \varepsilon^{-2})$ for $z\in\mathbb{R}$ and $\varepsilon\in (0,1)$. For $\varepsilon\in (0,1)$, we define
\begin{subequations}\label{be}
\begin{align}
\beta_\varepsilon & := \int_0^1 z \int_\varepsilon^1 \zeta_\varepsilon(z-z_*) B(z_*)\ \mathrm{d}z_*\mathrm{d}z\ , \label{be1}\\
B_\varepsilon(z) & :=  \frac{1}{\beta_\varepsilon} \int_\varepsilon^1 \zeta_\varepsilon(z-z_*) B(z_*)\ \mathrm{d}z_*\ , \qquad z\in (0,1)\ , \label{be2}
\end{align}
and
\begin{equation}
b_\varepsilon(x,y)  := \frac{1}{y} B_\varepsilon\left( \frac{x}{y} \right) \ , \qquad 0<x<y\ . \label{be3}
\end{equation}
\end{subequations}
As we shall see below, see \eqref{app1}, the parameter $\beta_\varepsilon$ is positive for $\varepsilon>0$ sufficiently small, so that $B_\varepsilon$ is well-defined for such values of $\varepsilon$. Indeed, thanks to \eqref{aa5}, \eqref{aa6}, and the properties of $\zeta$, 
\begin{subequations}\label{app}
\begin{equation}
B_\varepsilon\ge 0 \;\;\text{ a.e. in }\;\; (0,1)\ , \quad B_\varepsilon \in L^1((0,1),z\mathrm{d}z)\ , \quad \int_0^1 zB_\varepsilon(z)\ \mathrm{d}z =1\ , \label{app0}
\end{equation}
\begin{equation}
\lim_{\varepsilon\to 0} \beta_\varepsilon = 1\ , \label{app1}
\end{equation}
and $B_\varepsilon\in L^p((0,\infty),z^m\mathrm{d}z)$ for all $(m,p)\in \mathcal{A}_\nu$ with
\begin{equation}
\lim_{\varepsilon\to 0} \int_0^1 z^m |B_\varepsilon(z)-B(z)|^p\ \mathrm{d}z = \lim_{\varepsilon\to 0} \int_0^1 z|\ln{z}|  |B_\varepsilon(z)-B(z)|\ \mathrm{d}z = 0 \ . \label{app2}
\end{equation}
\end{subequations}
An obvious consequence of \eqref{app2} is that
\begin{equation}
\lim_{\varepsilon\to 0} \mathfrak{b}_{m,p,\varepsilon} = \mathfrak{b}_{m,p} \ , \qquad (m,p)\in \mathcal{A}_\nu\ , \qquad\qquad \lim_{\varepsilon\to 0} \mathfrak{b}_{\ln,\varepsilon} = \mathfrak{b}_{\ln}\ , \label{app3} 
\end{equation}
where
\begin{equation*}
\mathfrak{b}_{m,p,\varepsilon} := \int_0^1 z^m B_\varepsilon(z)^p\ \mathrm{d}z\ , \qquad (m,p)\in \mathbb{R}\times [1,\infty)\ ,  \qquad\qquad \mathfrak{b}_{\ln,\varepsilon} := \int_0^1 z |\ln{z}| B_\varepsilon(z)\ \mathrm{d}z\ .
\end{equation*}
Recalling that $1+ \mathfrak{b}_{1+\lambda-\alpha,1}>2\mathfrak{b}_{1+\lambda-\alpha,1}$ due to $1+\lambda-\alpha>1$, it follows from \eqref{app1} and \eqref{app3} that there is $\varepsilon_0\in (0,1)$ such that, for $\varepsilon\in (0,\varepsilon_0)$, 
\begin{equation}
\mathfrak{b}_{m_0,1,\varepsilon} \le 1 + \mathfrak{b}_{m_0,1} \ , \qquad \mathfrak{b}_{1+\lambda-\alpha,1,\varepsilon} \le \frac{1+\mathfrak{b}_{1+\lambda-\alpha,1}}{2} < 1 \ , \qquad \mathfrak{b}_{m_1,q_1,\varepsilon} \le 1 + \mathfrak{b}_{m_1,q_1}\ . \label{app4}
\end{equation} 
An immediate consequence of \eqref{app4} is that, for $\varepsilon\in (0,\varepsilon_0)$,
\begin{equation}
\sup_{m\ge m_0}\{ \mathfrak{b}_{m,1,\varepsilon} \} \le 1 + \mathfrak{b}_{m_0,1}\ , \qquad \sup_{m\ge 1+\lambda-\alpha}\{ \mathfrak{b}_{m,1,\varepsilon} \} \le \frac{1+\mathfrak{b}_{1+\lambda-\alpha,1}}{2} \ . \label{app4a}
\end{equation}
Morever,
\begin{equation}
B_\varepsilon(z)=0\ , \qquad z\in [0,\varepsilon-\varepsilon^2]\ , \qquad B_\varepsilon\in W^{1,1}(0,1)\ , \label{app5}
\end{equation}
and
\begin{equation}
\int_0^1 B_\varepsilon(z)\ \mathrm{d}z \le \frac{1}{\varepsilon \beta_\varepsilon}\ , \qquad \sup_{z\in [0,1]}\{ B_\varepsilon(z)\} \le \int_0^1 \left| \frac{\mathrm{d}B_\varepsilon}{\mathrm{d}z}(z) \right|\ \mathrm{d}z \le \frac{1}{\varepsilon^3 \beta_\varepsilon} \int_{\mathbb{R}} \left| \frac{\mathrm{d}\zeta}{\mathrm{d}z}(z) \right|\ \mathrm{d}z\ . \label{app6}
\end{equation}

\begin{remark}\label{remc}
In fact, if the function $B$ in \eqref{aa5} satisfies \eqref{aa6} for any $(m,p)\in\mathcal{A}_0$, as well as $B(0)=0$ and $B\in W^{1,1}(0,1)$, then we may take $B_\varepsilon=B$. This is true in particular for the parabolic daughter distribution function corresponding to $B(z) = 12 z(1-z)$, $z\in (0,1)$. 
\end{remark}

Next, since $\varrho\in (0,\varrho_\star)$, we infer from \eqref{app3} that there is $\varepsilon_\varrho\in (0,\varepsilon_0)$ such that
\begin{equation}
\varrho < \frac{\varrho+\varrho_\star}{2} \le \varrho_{\star,\varepsilon} := \frac{a_0 \mathfrak{b}_{\ln,\varepsilon}}{2 K_0 \ln{2}}\ , \qquad \varepsilon\in (0,\varepsilon_\varrho)\ . \label{app7}
\end{equation}

Finally, since $m_1+\lambda-1\in (m_0,\lambda)$ by \eqref{aa1} and $(m_1,1)\in \mathcal{A}_\nu$ by \eqref{aa12a}, we may fix 
\begin{equation}
q_1\in (1,2) \;\text{ such that }\; (m_1,q_1)\in \mathcal{A}_\nu \;\text{ and }\; \frac{m_1+1+q_1(\lambda-2)}{q_1} \in (m_0,\lambda)\ . \label{p1} 
\end{equation}

The main result of this section is then the following:

\begin{proposition}\label{PropApp}
Let $\varepsilon\in (0,\varepsilon_\varrho)$. There is 
\begin{equation*}
\varphi_\varepsilon\in X_1^+ \cap L^{q_1}((0,\infty),x^{m_1}\mathrm{d}x) \cap W^{1,1}(0,\infty) \cap \bigcap_{m\ge \lambda-2} X_m\ , 
\end{equation*}
such that $M_1(\varphi_\varepsilon)=\varrho$ and
\begin{align}
\int_0^\infty \left[ \vartheta(x) - x \partial_x\vartheta(x) \right] \varphi_\varepsilon(x)\ \mathrm{d}x & = \frac{1}{2} \int_0^\infty \int_0^\infty K(x,y) \chi_\vartheta(x,y) \varphi_\varepsilon(x) \varphi_\varepsilon(y)\ \mathrm{d}y\mathrm{d}x \nonumber \\
& \qquad - \int_0^\infty a(x) N_{\vartheta,\varepsilon}(x) \varphi_\varepsilon(x)\ \mathrm{d}x\ , \label{app8}
\end{align}
for all $\vartheta\in \Theta_1$, where $\Theta_1$ is defined in \eqref{a6c} and
\begin{equation*}
N_{\vartheta,\varepsilon}(y) := \vartheta(y) - \int_0^y \vartheta(x) b_\varepsilon(x,y)\ \mathrm{d}x\ , \qquad y>0\ .
\end{equation*}
Moreover, 
\begin{subequations}\label{x0}
\begin{itemize}
\item[(a)] There is $\ell>0$ depending only on $\lambda$, $\alpha$, $K_0$, $a_0$, $B$, $\nu$, $m_0$, $m_1$, $q_1$, and $\varrho$ such that
\begin{align}
\int_0^\infty x \ln{(x)}\; \varphi_\varepsilon(x)\ \mathrm{d}x + \frac{3}{e(1-m_1)} M_{m_1}(\varphi_\varepsilon) & \le \ell\ , \label{x1}\\
M_{m_0}(\varphi_\varepsilon) & \le \ell\ , \label{x2} \\
\int_0^\infty x^{m_1} \varphi_\varepsilon(x)^{q_1}\ \mathrm{d}x & \le \ell \ . \label{x3}
\end{align}
\item[(b)] For all $m\ge 1+\lambda$, there is $L(m)>0$ depending only on $\lambda$, $\alpha$, $K_0$, $a_0$, $B$, $\nu$, $m_0$, $m_1$, $q_1$, $\varrho$, and $m$ such that
\begin{equation}
M_m(\varphi_\varepsilon) \le L(m)\ . \label{x4}
\end{equation}
\end{itemize}
\end{subequations}
\end{proposition}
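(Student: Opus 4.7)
The plan is to apply a weak-topology Schauder fixed-point theorem to the semiflow associated with the rescaled, regularised coagulation-fragmentation equation \eqref{a9} (with $b$ replaced by $b_\varepsilon$), restricted to the affine subset $\{g\in X_1^+ : M_1(g)=\varrho\}$. The regularisation \eqref{be} yields, by \eqref{app5}--\eqref{app6}, a bounded and $W^{1,1}$ daughter distribution which vanishes near zero, and this makes the rescaled equation classically well-posed in $X_{m_0}^+\cap X_{1+\lambda}$; denote by $(S_\varepsilon(s))_{s\ge 0}$ the resulting semiflow, which preserves total mass by construction.

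The key step is to construct a non-empty convex subset $\mathcal{Z}_\varepsilon$ of the mass slice which is weakly compact in $X_1$ and invariant under $S_\varepsilon(s)$. I would take $\mathcal{Z}_\varepsilon$ to be the intersection of $\{M_1(g)=\varrho\}$ with the sub-level sets corresponding to the estimates \eqref{x1}--\eqref{x4}, each produced by testing \eqref{a9a} against a suitable weight. The decisive bound is the one on $\int x\ln(x)g\,\mathrm{d}x + \frac{3}{e(1-m_1)}M_{m_1}(g)$, since it is the only one where the smallness condition $\varrho<\varrho_{\star,\varepsilon}$ enters. Taking $\vartheta(x)=x\ln x$ in the rescaled identity, the scaling contribution is $\int(y\vartheta'-\vartheta)g\,\mathrm{d}y = M_1(g)=\varrho$ and the fragmentation contribution is exactly $-a_0\mathfrak{b}_{\ln,\varepsilon}M_\lambda(g)$; the coagulation contribution is handled via the binary-entropy bound $\chi_\vartheta(x,y)\le(x+y)\ln 2$, which combined with the symmetric expansion of $K(x,y)(x+y)$, the mass constraint, and the log-convexity of moments yields an upper bound of the form $2K_0\ln 2\,\varrho\,M_\lambda(g)$ plus lower-order terms. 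Since $2K_0\ln 2\,\varrho<a_0\mathfrak{b}_{\ln,\varepsilon}$ by \eqref{app7}, the net $M_\lambda$-coefficient is strictly negative, producing the dissipative inequality required for invariance. The auxiliary summand $M_{m_1}(g)$ absorbs the negative part of $x\ln x$ near the origin; its own differential inequality is dissipative because $\vartheta(x)=x^{m_1}$ with $m_1\in(0,1)$ is subadditive (so $\chi_\vartheta\le 0$) and the scaling drift $(m_1-1)M_{m_1}$ is negative.

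The remaining ingredients are technically simpler. The low-moment bound \eqref{x2} is obtained analogously from $\vartheta(x)=x^{m_0}$, using $-\nu-1<m_0<\alpha$ from \eqref{aa11} to ensure the fragmentation constant $\mathfrak{b}_{m_0,1,\varepsilon}$ is finite by \eqref{app4}. The $L^{q_1}$-estimate \eqref{x3} follows from a duality computation, testing the evolution equation against $g^{q_1-1}x^{m_1}$: the membership $(m_1,q_1)\in\mathcal{A}_\nu$ together with the algebraic condition \eqref{p1} is exactly what makes the fragmentation gain controllable via \eqref{aa6} and a Young inequality. High-moment bounds \eqref{x4} for $m\ge 1+\lambda$ are the standard estimates based on the concavity bound $\chi_{\vartheta}(x,y)\le C_m(x^{m-1}y+xy^{m-1})$ for $\vartheta(x)=x^m$ and the scaling dissipation $(1-m)M_m<0$. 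With $\mathcal{Z}_\varepsilon$ so defined, non-emptiness is witnessed by a suitable multiple of $xe^{-x}$, convexity is immediate, and weak compactness in $X_1$ follows from Dunford--Pettis (uniform integrability from the $L^{q_1}$-bound, tightness at infinity from the high-moment bounds). Continuity of $S_\varepsilon(s)$ in the weak topology of $X_1$ on $\mathcal{Z}_\varepsilon$ is inherited from the well-posedness theory, and an application of the weak-topology Schauder theorem in the form \cite[Theorem~1.2]{EMRR05} (cf.\ \cite[Proposition~22.13]{Aman90}) produces a stationary point $\varphi_\varepsilon\in\mathcal{Z}_\varepsilon$ of the semiflow, which satisfies \eqref{app8} by unwinding the weak formulation of the stationary equation.

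The additional regularity $\varphi_\varepsilon\in W^{1,1}(0,\infty)\cap\bigcap_{m\ge\lambda-2}X_m$ is a by-product of the regularised structure of $b_\varepsilon$: the gap property \eqref{app5} forces, via the integral representation \eqref{EqProfile2}, that $\varphi_\varepsilon\equiv 0$ on a neighbourhood of zero, which makes every negative-order moment finite; differentiability follows by solving \eqref{EqProfile1} for $y\partial_y\varphi_\varepsilon$ and using $B_\varepsilon\in W^{1,1}(0,1)$ to differentiate the fragmentation gain term. The principal obstacle throughout the argument is the invariance of the $\int x\ln x\,g$ functional: extending $\vartheta(x)=x\ln x$ to an admissible test function in $\Theta_1$ requires a truncation near zero compatible with the condition $\alpha>-\nu-1$ from \eqref{aa8}, and the sharpness of the coagulation estimate, which delivers the precise constant $2K_0\ln 2$ appearing in $\varrho_\star$, can only be recovered by combining the binary-entropy bound with the log-convex interpolation of moments at the prescribed mass value $M_1=\varrho$.
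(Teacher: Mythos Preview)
Your overall architecture---a dynamical-systems fixed-point argument on an invariant set for the rescaled equation---is precisely the paper's, but the decisive $x\ln x$ estimate does not close with your entropy bound. The inequality $\chi_{\bar\vartheta}(x,y)\le(x+y)\ln 2$ is correct, but after multiplying by $K(x,y)$ and integrating it yields $K_0\ln 2\bigl[M_{1+\alpha}M_{\lambda-\alpha}+M_{\alpha}M_{1+\lambda-\alpha}\bigr]$. Since $\alpha\ge\lambda-1$ by \eqref{aa1}, one has $\lambda-\alpha\le 1\le\lambda\le 1+\alpha$, and log-convexity of $m\mapsto\ln M_m$ then gives $M_{\lambda-\alpha}M_{1+\alpha}\ge M_1M_\lambda$ (and likewise for the second product): the interpolation goes the \emph{wrong} way. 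There is thus no route from your bound to a leading term $2K_0\ln 2\,\varrho\,M_\lambda$, and the comparison with $-a_0\mathfrak{b}_{\ln,\varepsilon}M_\lambda$ that fixes the threshold $\varrho_\star$ collapses. The paper uses instead the sharper inequality $\chi_{\bar\vartheta}(x,y)\le 2\ln 2\sqrt{xy}$ from \cite[Lemma~2.3]{LaurXX} together with $\sqrt{xy}\,K(x,y)\le K_0(x^\lambda y+xy^\lambda)$ (a Young inequality exploiting $\alpha\in[1/2,\lambda/2]$), which delivers exactly $2K_0\ln 2\,M_1M_\lambda$.

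Two further points. For $m\ge 1+\lambda$ the scaling contribution to $\tfrac{d}{ds}M_m$ is $(m-1)M_m>0$, not $(1-m)M_m$: the damping of high moments comes from \emph{fragmentation}, via $-a_0(1-\mathfrak{b}_{m,1,\varepsilon})M_{m+\lambda-1}$, and absorbing the coagulation growth requires the tail control already obtained from the $x\ln x$ bound (Corollary~\ref{Cord2} feeding into Lemma~\ref{Lemd3}), so these estimates are not independent. Your a~posteriori route to $\varphi_\varepsilon\in W^{1,1}\cap\bigcap_{m\ge\lambda-2}X_m$ is also unsupported: the gap \eqref{app5} restricts only the \emph{ratio} of fragment to parent size and does not force $\varphi_\varepsilon\equiv 0$ near the origin. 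The paper instead incorporates $\varepsilon$-dependent bounds on $M_{\lambda-2}$ and $\|\partial_x g\|_1$ (Lemmas~\ref{Lemd100} and~\ref{Lemd6}) into $\mathcal{Z}_\varepsilon$ itself, which makes $\mathcal{Z}_\varepsilon$ strongly compact in $X_1$ and permits the norm-topology fixed-point argument.
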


The main steps in the proof of Proposition~\ref{PropApp} are the derivation of \eqref{x1} and \eqref{x3}. The former is inspired from \cite[Lemma~4.2]{EMRR05} and combines a differential inequality for a superlinear moment, involving here the weight $x\mapsto x\ln{x}$, and a differential inequality for a sublinear moment. The validity of \eqref{x1} requires the smallness condition $\varrho\in (0,\varrho_\star)$, the value of $\varrho_\star$ being prescribed by an algebraic inequality established in \cite[Lemma~2.3]{LaurXX}, see \eqref{AIL} below. As for \eqref{x3}, it relies on the monotonicity of $x\mapsto x^{m_1} K(x,y)$ to handle the contribution of the coagulation term, similar arguments being used in \cite{BLLxx, Buro83, LaMi02c, MiRR03} to derive $L^p$-estimates for solutions to coagulation-fragmentation equations.

\subsection{Scaling variables and well-posedness} \label{sec2.2}

Let $\varepsilon\in (0,\varepsilon_\varrho)$. We begin with the existence and uniqueness of a mass-conserving weak solution to
\begin{subequations}\label{d3}
\begin{align}
\partial_s g_\varepsilon(s,x) & = - x \partial_x g_\varepsilon(s,x) - 2 g_\varepsilon(s,x) + \mathcal{C}g_\varepsilon(s,x) + \mathcal{F}_\varepsilon g_\varepsilon(s,x)\ , \qquad (s,x) \in (0,\infty)^2\ , \label{d3a} \\
g_\varepsilon(0,x) & = f^{in}(x)\ , \qquad x\in (0,\infty)\ , \label{d3d}
\end{align}
\end{subequations}
where $\mathcal{F}_\varepsilon$ denotes the fragmentation operator with $b$ replaced with $b_\varepsilon$.

\begin{proposition}\label{PropP1}
Consider an initial condition $f^{in}\in X_1^+\cap X_{m_0}\cap X_{2\lambda-\alpha}$ such that 
\begin{equation}
M_1(f^{in}) = \varrho\ . \label{a100}
\end{equation}
There is a unique mass-conserving weak solution $g_\varepsilon$ to \eqref{a1} on $[0,\infty)$ satisfying
\begin{equation*}
g_\varepsilon\in C([0,T),X_{m_1,w})\cap L^\infty((0,T),X_{m_0}) \cap L^\infty((0,T),X_{2\lambda-\alpha}) \;\text{ for any }\; T>0\ , 
\end{equation*} 
\begin{equation}
M_1(g_\varepsilon(s)) = \varrho\ , \qquad s\ge 0\ , \label{d9}
\end{equation}
and
\begin{align}
\int_0^\infty (g_\varepsilon(s,x)-f^{in}(x)) \vartheta(x)\ \mathrm{d}x & = \int_0^s \int_0^\infty \left[ x \partial_x\vartheta(x) - \vartheta(x) \right] g_\varepsilon(s_*,x)\ \mathrm{d}x\mathrm{d}s_* \nonumber\\
& \qquad + \frac{1}{2} \int_0^s \int_0^\infty \int_0^\infty K(x,y) \chi_\vartheta(x,y) g_\varepsilon(s_*,x) g_\varepsilon(s_*,y)\ \mathrm{d}y\mathrm{d}x\mathrm{d}s_* \nonumber \\
& \qquad - \int_0^s \int_0^\infty a(y) N_{\vartheta,\varepsilon}(y) g_\varepsilon(s_*,y)\ \mathrm{d}y\mathrm{d}s_*\ , \label{d3b}
\end{align}
for all $s\in (0,\infty)$ and $\vartheta\in\Theta_{m_1}$, where $\Theta_0:= L^\infty(0,\infty)$ and 
\begin{equation*}
\Theta_m := \left\{ \vartheta\in C^m([0,\infty))\cap L^\infty(0,\infty)\ :\ \vartheta(0) = 0 \right\}\ .
\end{equation*} 
We recall that $N_{\vartheta,\varepsilon}$ in \eqref{d3b} is defined in Proposition~\ref{PropApp}, 
\end{proposition}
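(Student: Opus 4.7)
The plan is to reduce Proposition~\ref{PropP1} to the corresponding well-posedness result for the original coagulation-fragmentation equation \eqref{a1} with coefficients $(K,a,b_\varepsilon)$ and initial datum $f^{in}$, and then transport everything to the scaling variables via \eqref{a7}--\eqref{a8}. Since the regularised daughter distribution $B_\varepsilon$ inherits the structural properties \eqref{aa5}, \eqref{aa6}, and \eqref{aa8} of $B$ (with the same exponent $\nu$) and since \eqref{app7} ensures $\varrho<\varrho_{\star,\varepsilon}$ for $\varepsilon\in (0,\varepsilon_\varrho)$, the companion paper \cite{LaurXX}, cf.~Remark~\ref{remb}, applies to the coefficients $(K,a,b_\varepsilon)$ and yields a unique mass-conserving weak solution
\begin{equation*}
f_\varepsilon \in C([0,\infty),X_{m_1,w})\cap L^\infty_{\mathrm{loc}}([0,\infty);X_{m_0}\cap X_{2\lambda-\alpha})
\end{equation*}
to \eqref{a1} satisfying $M_1(f_\varepsilon(t))=\varrho$ for all $t\ge 0$.

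I would then set
\begin{equation*}
g_\varepsilon(s,y) := e^{-2s} f_\varepsilon\!\left(t(s),\, y e^{-s}\right),\qquad t(s) := \tfrac{1}{\lambda-1}\left(e^{(\lambda-1)s}-1\right),
\end{equation*}
and verify the claimed properties. The change of variables $x=ye^{-s}$ provides the moment identity $M_m(g_\varepsilon(s))=e^{(m-1)s}M_m(f_\varepsilon(t(s)))$ for every $m\in\RR$; mass conservation and the stated regularity of $g_\varepsilon$ follow at once, using that $s\mapsto t(s)$ is a smooth bijection of $[0,\infty)$ onto itself which maps bounded $s$-intervals to bounded $t$-intervals. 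The weak formulation \eqref{d3b} is then obtained by testing the weak formulation of $f_\varepsilon$ on $[0,t(s)]$ against the time-dependent test function $\Phi(t,x):= s_\lambda(t)^{-1}\vartheta(xs_\lambda(t))$ with $\vartheta\in\Theta_{m_1}$. Performing the substitution $\sigma=\ln s_\lambda(t)$, $y=x s_\lambda(t)$ and using the homogeneities $K(\cdot s_\lambda^{-1},\cdot s_\lambda^{-1})=s_\lambda^{-\lambda}K$ and $a(\cdot s_\lambda^{-1})=s_\lambda^{-(\lambda-1)}a$ together with the scaling form \eqref{be3} of $b_\varepsilon$, all the Jacobian factors cancel exactly to produce the three integrals in \eqref{d3b}; in particular, the derivative $\partial_t\Phi$ generates the drift-like term $\int_0^s\int[x\partial_x\vartheta-\vartheta]g_\varepsilon\,\mathrm{d}x\,\mathrm{d}s_*$.

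The main work is hidden in the appeal to \cite{LaurXX}: existence and uniqueness of mass-conserving weak solutions to the original equation rely on delicate propagation of moments and an $L^1$-contraction estimate with a super-linear weight, both of which use crucially the smallness $\varrho<\varrho_{\star,\varepsilon}$. Once that input is available, the remaining steps are conceptually clean: the scaling variables are tailored to the homogeneity of $(K,a,b_\varepsilon)$, and uniqueness of $g_\varepsilon$ is inherited from that of $f_\varepsilon$ through the invertibility of the scaling map. A secondary technical point is justifying the use of time-dependent test functions $\Phi(t,\cdot)\in\Theta_{m_1}$ in the weak formulation for $f_\varepsilon$, which is handled by a standard approximation argument given the smoothness of $\Phi$ in $t$ and the $L^\infty_{\mathrm{loc}}$ moment bounds on $f_\varepsilon$.
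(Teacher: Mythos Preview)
Your proposal is correct and follows essentially the same approach as the paper: invoke \cite[Theorem~1.2]{LaurXX} for the coefficients $(K,a,b_\varepsilon)$ to obtain the mass-conserving weak solution $f_\varepsilon$ to \eqref{a1}, then define $g_\varepsilon$ by the scaling change of variables \eqref{a7}. The paper's proof is terser---it simply writes down the definition of $g_\varepsilon$ and declares the proof complete---whereas you spell out the verification of \eqref{d3b} via time-dependent test functions, but the underlying argument is the same.
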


\begin{proof}
Owing to \eqref{aa1}, \eqref{aa2}, \eqref{aa3}, \eqref{be3}, \eqref{app0}, and the integrability properties of $B_\varepsilon$, we are in a position to apply \cite[Theorem~1.2]{LaurXX}, which guarantees the existence and uniqueness of a mass-conserving weak solution $f_\varepsilon$ to the coagulation-fragmentation equation 
\begin{subequations}\label{dd3}
\begin{align}
\partial_t f_\varepsilon(t,x) & = \mathcal{C}f_\varepsilon(t,x) + \mathcal{F}_\varepsilon f_\varepsilon(t,x)\ , \qquad (t,x) \in (0,\infty)^2\ , \label{dd3a} \\
f_\varepsilon(0,x) & = f^{in}(x)\ , \qquad x\in (0,\infty)\ , \label{dd3d}
\end{align}
\end{subequations}
which satisfies 
\begin{equation*}
f_\varepsilon\in C([0,T),X_{m_1,w})\cap L^\infty((0,T),X_{m_0}) \cap L^\infty((0,T),X_{2\lambda-\alpha}) 
\end{equation*} 
for any $T>0$ and $M_1(f_\varepsilon(t)) = \varrho$ for $t\ge 0$. Setting 
\begin{equation}
\Psi_\varepsilon(s;f^{in})(x) = g_\varepsilon(s,x) := e^{-2s} f_\varepsilon\left( \frac{e^{(\lambda-1)s}-1}{\lambda-1} , x e^{-s} \right)\ , \qquad (s,x)\in  [0,\infty) \times (0,\infty)\ , \label{d2}
\end{equation}
completes the proof of Proposition~\ref{PropP1}.
\end{proof}

The next results are devoted to the derivation of a series of estimates satisfied by the weak solutions to \eqref{d3} provided by Proposition~\ref{PropP1}, except for Lemma~\ref{Lemd8} where the continuous dependence of $\Psi_\varepsilon(\cdot;f^{in})$ in $X_1$ with respect to the initial condition is established.

Throughout the remainder of this section, $\kappa$ and $(\kappa_i)_{i\ge 1}$ are positive constants depending only on $\lambda$, $\alpha$, $K_0$, $a_0$, $B$, $\nu$, $m_0$, $m_1$, $q_1$, and $\varrho$. Dependence upon additional parameters is indicated explicitly.

\newcounter{Num2Const}

\subsection{Moment Estimates} \label{sec3.1}

We begin with the derivation of estimates for moments of order $m\in [m_1,1]$, the parameter $m_1$ being defined in \eqref{aa11}.

\begin{lemma}\label{Lemd1}
\refstepcounter{Num2Const}\label{kap1}
Consider $\varepsilon\in (0,\varepsilon_\varrho)$ and $f^{in}\in X_0^+\cap X_{1+\lambda}$ such that $M_1(f^{in})=\varrho$ and let $g_\varepsilon=\Psi_\varepsilon(\cdot;f^{in})$ be given by \eqref{d2}. For $m\in [m_1,1)$, there is $\kappa_{\ref{kap1}}(m)>0$ depending on $m$ such that, for $t\ge 0$, 
\begin{align*}
& \int_0^\infty x\ln{(x)}\, g_\varepsilon(s,x)\ \mathrm{d}x + \frac{3}{e (1-m)} M_m(g_\varepsilon(s)) \\
& \qquad\qquad \le \max\left\{ \int_0^\infty x\ln{(x)}\, f^{in}(x)\ \mathrm{d}x + \frac{3}{e (1-m)} M_m(f^{in}) , \kappa_{\ref{kap1}}(m) \right\}\ , \qquad s\ge 0\ .
\end{align*}
\end{lemma}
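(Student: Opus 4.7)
The plan is to test the weak formulation \eqref{d3b} against the two weight functions $\vartheta_1(x)=x\ln x$ and $\vartheta_2(x)=x^m$, and to combine the resulting differential inequalities into a Lyapunov bound for
\begin{equation*}
W(s) := \int_0^\infty x\ln(x)\, g_\varepsilon(s,x)\,\mathrm{d}x + \frac{3}{e(1-m)}\, M_m(g_\varepsilon(s))\ ,
\end{equation*}
from which the conclusion follows by an ODE-comparison argument. Since neither $\vartheta_1$ nor $\vartheta_2$ belongs to $\Theta_1$ strictly, both have to be realised as limits of $\Theta_1$-approximations; the additional integrability of $g_\varepsilon$ inherited from $f^{in}\in X_{1+\lambda}$ through Proposition~\ref{PropP1} will justify the limiting procedure. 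A preliminary one-variable optimisation (maximising $y\mapsto y\mathrm{e}^{-(1-m)y}$ over $y=-\ln x\ge 0$) yields the pointwise bound $-x\ln x\le x^m/[e(1-m)]$ on $(0,1]$, so that $W(s)\ge \tfrac{2}{e(1-m)}M_m(g_\varepsilon(s))\ge 0$; this coercivity underpins the final comparison step.

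For $\vartheta_1=x\ln x$, direct computation gives $x\vartheta_1'-\vartheta_1=x$, the convexity and superadditivity of $z\mapsto z\ln z$ furnish $0\le\chi_{\vartheta_1}(x,y)\le(x+y)\ln 2$, and the scaling \eqref{be3} together with $\int_0^1 z\,B_\varepsilon(z)\,\mathrm{d}z=1$ produces $N_{\vartheta_1,\varepsilon}(y)=y\,\mathfrak{b}_{\ln,\varepsilon}$. Differentiating \eqref{d3b} then yields $\frac{\mathrm{d}}{\mathrm{d}s}\int x\ln x\,g_\varepsilon\,\mathrm{d}x\le\varrho+\tfrac{1}{2}\iint K\,\chi_{\vartheta_1}\,g_\varepsilon g_\varepsilon-a_0\mathfrak{b}_{\ln,\varepsilon}M_\lambda(g_\varepsilon)$. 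The coagulation integral is then controlled via the algebraic inequality of \cite[Lemma~2.3]{LaurXX} by $2K_0\ln 2\cdot\varrho\cdot M_\lambda(g_\varepsilon)+\mathcal{R}(M_m(g_\varepsilon))$, where $\mathcal{R}$ is a remainder controlled by sublinear moments only (this is the decisive structure encoded by the choice of $\varrho_\star$). Because $\varrho<\varrho_{\star,\varepsilon}$ by \eqref{app7}, the net $M_\lambda$-coefficient is $-\eta$ with $\eta:=a_0\mathfrak{b}_{\ln,\varepsilon}-2K_0\ln 2\,\varrho>0$, a genuine dissipation.

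For $\vartheta_2(x)=x^m$ one has $x\vartheta_2'-\vartheta_2=(m-1)x^m$, $\chi_{\vartheta_2}(x,y)=(x+y)^m-x^m-y^m\le 0$ by subadditivity of $z\mapsto z^m$ on $(0,\infty)$ for $m\le 1$, and $N_{\vartheta_2,\varepsilon}(y)=(1-\mathfrak{b}_{m,1,\varepsilon})y^m$ with $\mathfrak{b}_{m,1,\varepsilon}\ge 1$ (since $z^m\ge z$ on $(0,1]$). Discarding the non-positive coagulation term yields $\frac{\mathrm{d}M_m}{\mathrm{d}s}\le-(1-m)M_m(g_\varepsilon)+a_0(\mathfrak{b}_{m,1,\varepsilon}-1)\,M_{m+\lambda-1}(g_\varepsilon)$; and since $m\ge m_1\ge 2-\lambda$ places $m+\lambda-1\in[1,\lambda)$, Hölder interpolation between the conserved $M_1=\varrho$ and $M_\lambda$ followed by Young's inequality transforms the last term into $\delta\,M_\lambda+C(\delta,\varrho,m)$ for any $\delta>0$.

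Summing the first differential inequality with $\frac{3}{e(1-m)}$ times the second produces a dissipation $-\frac{3}{e}M_m(g_\varepsilon)$; choosing $\delta$ so small that $\tfrac{3 a_0(\mathfrak{b}_{m,1,\varepsilon}-1)}{e(1-m)}\delta<\eta/2$, and absorbing the remainder $\mathcal{R}(M_m)$ into this $\frac{3}{e}M_m$ gain (the factor $3$ being precisely the margin required by this absorption step), one arrives at
\begin{equation*}
\frac{\mathrm{d}W}{\mathrm{d}s}\le \kappa_{\ref{kap1}}(m) - \frac{1}{e(1-m)}M_m(g_\varepsilon) - \frac{\eta}{2}M_\lambda(g_\varepsilon)\ .
\end{equation*}
Combined with the elementary pointwise bound $x\ln x\le c_\lambda+x^\lambda$ valid for $\lambda>1$, which implies $W\le c'+c''(M_\lambda(g_\varepsilon)+M_m(g_\varepsilon))$, this yields $\mathrm{d}W/\mathrm{d}s\le 0$ on the set $\{W\ge\kappa_{\ref{kap1}}(m)\}$, so that $W(s)\le\max\{W(0),\kappa_{\ref{kap1}}(m)\}$ by a classical ODE comparison — which is the claimed bound. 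The main obstacle will be the careful bookkeeping of the remainder in \cite[Lemma~2.3]{LaurXX}: one must check that its dependence on $M_m(g_\varepsilon)$ is dominated by $\frac{3}{e}M_m(g_\varepsilon)$ up to an additive constant, and this is exactly what pins down the particular weighting $\frac{3}{e(1-m)}$ in the definition of $W$.
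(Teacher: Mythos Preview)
Your overall strategy coincides with the paper's: test \eqref{d3b} against $x^m$ and $x\ln x$, combine with the weight $\tfrac{3}{e(1-m)}$, and close by comparing $M_\lambda$ with $\int x\ln x\,g_\varepsilon\,\mathrm{d}x$. The individual computations of $N_{\vartheta,\varepsilon}$, the sign of $\chi_m$, the H\"older--Young interpolation of $M_{m+\lambda-1}$ between $M_1$ and $M_\lambda$, and the final ODE comparison are all correct and match the paper.

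There is, however, a genuine confusion at the heart of the $x\ln x$ step. The bound $\chi_{\vartheta_1}(x,y)\le (x+y)\ln 2$ that you derive from convexity is true but \emph{useless} here: multiplying it by $K(x,y)$ and integrating produces terms like $M_{1+\alpha}M_{\lambda-\alpha}$ and $M_{1+\lambda-\alpha}M_\alpha$, and since $\alpha\ge\lambda-1$ by \eqref{aa1} one has $1+\alpha\ge\lambda$, so these moments cannot be interpolated between $M_1$ and $M_\lambda$. What \cite[Lemma~2.3]{LaurXX} actually states (and what the paper records as \eqref{AIL}) is the sharper inequality
\[
\chi_{\vartheta_1}(x,y)\ \le\ 2\ln 2\,\sqrt{xy}\,,
\]
and then an elementary Young inequality on the exponents gives $\sqrt{xy}\,K(x,y)\le K_0\bigl(x^\lambda y + xy^\lambda\bigr)$, whence
\[
\tfrac12\iint K\,\chi_{\vartheta_1}\,g_\varepsilon g_\varepsilon
\ \le\ 2K_0\ln 2\,M_1(g_\varepsilon)\,M_\lambda(g_\varepsilon)
\ =\ 2K_0\ln 2\,\varrho\,M_\lambda(g_\varepsilon)\,.
\]
There is \emph{no} remainder $\mathcal{R}(M_m)$: the bound is clean, and the entire last paragraph of your proposal (``careful bookkeeping of the remainder'') addresses a difficulty that does not exist. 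Correspondingly, the factor $\tfrac{3}{e(1-m)}$ is not pinned down by any absorption of $\mathcal{R}$; it simply arises because, in the $M_m$ inequality, the paper applies Young with the specific splitting that turns $M_\lambda^{(m+\lambda-2)/(\lambda-1)}$ into $\tfrac{e(1-m)}{3}\delta_\varrho M_\lambda + C$, so that after multiplication by $\tfrac{3}{e(1-m)}$ the resulting $+\delta_\varrho M_\lambda$ is absorbed by the $-2\delta_\varrho M_\lambda$ coming from the $x\ln x$ inequality. Any positive constant in place of $\tfrac{3}{e(1-m)}$ would work with a matching Young parameter; there is nothing canonical about the $3$.
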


\begin{proof}
Let $s\ge 0$ and consider $m\in [m_1,1)$. Then
\begin{equation*}
\chi_m(x,y) := (x+y)^m - x^m - y^m \le 0 \ , \qquad (x,y)\in (0,\infty)^2\ , 
\end{equation*}
and 
\begin{equation*}
N_{m,\varepsilon}(y) := y^m - \int_0^y x^m b_\varepsilon(x,y)\ \mathrm{d}x = (1-\mathfrak{b}_{m,1,\varepsilon}) y^m \ge -\mathfrak{b}_{m,1,\varepsilon} y^m\ , \qquad y\in (0,\infty)\ .
\end{equation*}
Consequently, we infer from \eqref{aa2}, \eqref{app4a}, \eqref{d3b} (with $\vartheta(x)=x^m$, $x>0$), and the non-negativity of $g_\varepsilon$ and $K$ that
\begin{align*}
\frac{\mathrm{d}}{\mathrm{d}s} M_m(g_\varepsilon(s)) & \le -(1-m) M_m(g_\varepsilon(s)) + a_0 \mathfrak{b}_{m,1,\varepsilon} M_{m+\lambda-1}(g_\varepsilon(s)) \\
& \le -(1-m) M_m(g_\varepsilon(s)) + a_0 (1+\mathfrak{b}_{m_0,1}) M_{m+\lambda-1}(g_\varepsilon(s))\ .
\end{align*}
Observing that $m+\lambda-1\in [1,\lambda)$, it follows from \eqref{d9} and H\"older's inequality that
\begin{align*}
M_{m+\lambda-1}(g_\varepsilon(s)) & \le M_{\lambda}(g_\varepsilon(s))^{(m+\lambda-2)/(\lambda-1)} M_{1}(g_\varepsilon(s))^{(1-m)/(\lambda-1)} \\ 
& \le \varrho^{(1-m)/(\lambda-1)} M_{\lambda}(g_\varepsilon(s))^{(m+\lambda-2)/(\lambda-1)}\ .
\end{align*}
We combine the previous two inequalities and use Young's inequality (since $m+\lambda-2<\lambda-1$) to obtain
\begin{equation}
\frac{\mathrm{d}}{\mathrm{d}s} M_m(g_\varepsilon(s)) \le -(1-m) M_m(g_\varepsilon(s)) + \frac{e(1-m)}{3} \delta_\varrho M_\lambda(g_\varepsilon(s)) + \frac{e(1-m)}{3} \kappa(m)\ , \label{xx1}
\end{equation}
with
\begin{equation}
\delta_\varrho : = \frac{K_0 \ln{2}}{2} (\varrho_\star-\varrho)  > 0\ , \label{Drho}
\end{equation}
We next set $\bar{\vartheta}(x) = x \ln{x}$ for $x\ge 0$ and recall the inequality 
\begin{equation}
\chi_{\bar{\vartheta}}(x,y) = (x+y) \ln{(x+y)} - x \ln{x} -y\ln{y} \le 2 \ln{2} \sqrt{xy}\ , \qquad (x,y)\in (0,\infty)^2\ , \label{AIL}
\end{equation}
established in \cite[Lemma~2.3]{LaurXX}, along with the following consequence of \eqref{aa1}, \eqref{aa3}, and Young's inequality
\begin{align*}
\sqrt{xy} K(x,y) & \le K_0 xy \left( x^{(2\alpha-1)/2} y^{(2\lambda-2\alpha-1)/2} + x^{(2\lambda-2\alpha-1)/2} y^{(2\alpha-1)/2}\right) \\
& \le K_0xy \left( \frac{2\alpha-1}{2(\lambda-1)} x^{\lambda-1} + \frac{2\lambda-2\alpha-1}{2(\lambda-1)} y^{\lambda-1} + \frac{2\lambda-2\alpha-1}{2(\lambda-1)} x^{\lambda-1} + \frac{2\alpha-1}{2(\lambda-1)} y^{\lambda-1} \right) \\
& \le K_0 \left( x^\lambda y + x y^\lambda \right)\ , \qquad (x,y)\in (0,\infty)^2\ .
\end{align*}
Also, by \eqref{be3} and \eqref{app0}, 
\begin{equation*}
N_{\bar{\vartheta},\varepsilon}(y) = y \ln{y} - \int_0^1 yz \ln{(yz)} B_\varepsilon(z)\ \mathrm{d}z = \mathfrak{b}_{\ln,\varepsilon} y\ ,  \qquad y\in (0,\infty)\ .
\end{equation*}
We then infer from \eqref{aa2}, \eqref{aa3},  \eqref{app7}, \eqref{d9}, and \eqref{d3b} (with $\vartheta=\bar{\vartheta}$) that 
\begin{align*}
\frac{\mathrm{d}}{\mathrm{d}s} \int_0^\infty \bar{\vartheta}(x) g_\varepsilon(s,x)\ \mathrm{d}x & \le M_1(g_\varepsilon(s)) + 2K_0 \ln{(2)} M_1(g_\varepsilon(s)) M_\lambda(g_\varepsilon(s)) - a_0 \mathfrak{b}_{\ln,\varepsilon} M_\lambda(g_\varepsilon(s)) \\
& \le \varrho + 2K_0 \ln{2} \left( \varrho - \varrho_{\star,\varepsilon} \right) M_\lambda(g_\varepsilon(s)) \\
& \le  \varrho - 2\delta_\varrho M_\lambda(g_\varepsilon(s))\ ,
\end{align*}
the parameter $\delta_\varrho$ being defined in \eqref{Drho}. Combining \eqref{xx1} and the previous inequality, we find \refstepcounter{Num2Const}\label{kap1.5}
\begin{equation}
\frac{\mathrm{d}}{\mathrm{d}s} U_{m,\varepsilon}(s)  + \frac{3}{e} M_m(g_\varepsilon(s)) + \delta_\varrho M_\lambda(g_\varepsilon(s)) \le \kappa_{\ref{kap1.5}}(m)\ , \label{d10}
\end{equation}
where
\begin{equation*}
U_{m,\varepsilon}(s) := \int_0^\infty \bar{\vartheta}(x) g_\varepsilon(s,x)\ \mathrm{d}x + \frac{3}{e(1-m)} M_m(g_\varepsilon(s))\ .
\end{equation*}
Since 
\begin{equation*}
x^{\lambda-1} \ge \ln{x} + \frac{1+\ln{(\lambda-1)}}{\lambda-1}\ , \qquad x\in (0,\infty)\ ,
\end{equation*}
there holds
\begin{equation*}
M_\lambda(g_\varepsilon(s)) \ge \int_0^\infty \bar{\vartheta}(x) g_\varepsilon(s,x)\ \mathrm{d}x + \frac{1+\ln{(\lambda-1)}}{\lambda-1} M_1(g_\varepsilon(s))\ .
\end{equation*}
\refstepcounter{Num2Const}\label{kap2} \refstepcounter{Num2Const}\label{kap2.5}
Consequently, setting $\kappa_{\ref{kap2}}(m) := \min\{ 1-m , \delta_\varrho \}$ and using once more \eqref{d9}, we obtain
\begin{align*}
\frac{\mathrm{d}}{\mathrm{d}s} U_{m,\varepsilon}(s) & + \kappa_{\ref{kap2}}(m) U_{m,\varepsilon}(s) \\
& \le \frac{\mathrm{d}}{\mathrm{d}s} U_{m,\varepsilon}(s) + \kappa_{\ref{kap2}}(m) \left[ M_\lambda(g_\varepsilon(s)) - \varrho\frac{1+\ln{(\lambda-1)}}{\lambda-1} + \frac{3}{e(1-m)} M_m(g_\varepsilon(s)) \right] \\
& \le \left( \kappa_{\ref{kap2}}(m) - \delta_\varrho \right) M_\lambda(g_\varepsilon(s)) + \frac{3 \left[ \kappa_{\ref{kap2}}(m) - (1-m) \right]}{e(1-m)} M_m(g_\varepsilon(s)) + \kappa_{\ref{kap2.5}}(m) \\
& \le \kappa_{\ref{kap2.5}}(m)\ . 
\end{align*}
Integrating with respect to $s$ gives
\begin{equation*}
U_{m,\varepsilon}(s) \le e^{-\kappa_{\ref{kap2}}(m)s} U_{m,\varepsilon}(0) + \frac{\kappa_{\ref{kap2.5}}(m)}{\kappa_{\ref{kap2}}(m)} \left( 1- e^{-\kappa_{\ref{kap2}}(m)s} \right) \le  \max\left\{ U_{m,\varepsilon}(0) , \frac{\kappa_{\ref{kap2.5}}(m)}{\kappa_{\ref{kap2}}(m)}\right\} 
\end{equation*}
for $s\ge 0$ and Lemma~\ref{Lemd1} follows with $\kappa_{\ref{kap1}}(m) := \kappa_{\ref{kap2.5}}(m)/\kappa_{\ref{kap2}}(m)$. 
\end{proof}

From now on, we assume that $f^{in}$ satisfies
\begin{equation}
M_1(f^{in}) = \varrho \;\text{ and }\; \int_0^\infty x\ln{(x)}\, f^{in}(x)\ \mathrm{d}x + \frac{3}{e (1-m_1)} M_{m_1}(f^{in}) \le \kappa_{\ref{kap1}}(m_1)\ . \label{d11}
\end{equation}
A straightforward consequence of Lemma~\ref{Lemd1} is the following estimate.

\begin{corollary}\label{Cord2}
\refstepcounter{Num2Const}\label{kap3}
Consider $\varepsilon\in (0,\varepsilon_\varrho)$ and $f^{in}\in X_0^+\cap X_{1+\lambda}$ satisfying \eqref{d11} and let $g_\varepsilon=\Psi_\varepsilon(\cdot;f^{in})$ be given by \eqref{d2}. There is $\kappa_{\ref{kap3}}>0$ such that 
\begin{equation*}
\int_0^\infty x|\ln{x}| g_\varepsilon(s,x)\ \mathrm{d}x + M_{m_1}(g_\varepsilon(s)) \le \kappa_{\ref{kap3}}\ , \qquad s\ge 0\ .
\end{equation*}
\end{corollary}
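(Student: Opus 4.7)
The plan is to derive the corollary as a direct consequence of Lemma~\ref{Lemd1} applied at $m=m_1$. First, assumption \eqref{d11} asserts precisely that the quantity $U_{m_1,\varepsilon}(0) := \int_0^\infty x\ln(x) f^{in}(x)\,\mathrm{d}x + \tfrac{3}{e(1-m_1)}M_{m_1}(f^{in})$ is bounded by $\kappa_{\ref{kap1}}(m_1)$, so Lemma~\ref{Lemd1} immediately yields
\begin{equation*}
A(s) := \int_0^\infty x\ln(x)\, g_\varepsilon(s,x)\,\mathrm{d}x + \frac{3}{e(1-m_1)} M_{m_1}(g_\varepsilon(s)) \le \kappa_{\ref{kap1}}(m_1)\,, \qquad s\ge 0.
\end{equation*}

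The next step is to control the two terms of $A(s)$ separately. The elementary calculus fact $\max_{x\in(0,1)} x^{1-m_1}|\ln x| = 1/[e(1-m_1)]$ (attained at $x = e^{-1/(1-m_1)}$) gives $x|\ln x| \le \tfrac{1}{e(1-m_1)}\, x^{m_1}$ for $x\in(0,1)$, hence
\begin{equation*}
-\int_0^\infty x\ln(x)\, g_\varepsilon(s,x)\,\mathrm{d}x \le \int_0^1 x|\ln x|\, g_\varepsilon(s,x)\,\mathrm{d}x \le \frac{1}{e(1-m_1)} M_{m_1}(g_\varepsilon(s))\,.
\end{equation*}
Inserting this lower bound for the first term of $A(s)$ yields $\tfrac{2}{e(1-m_1)} M_{m_1}(g_\varepsilon(s)) \le A(s) \le \kappa_{\ref{kap1}}(m_1)$, i.e.\ a stand-alone bound $M_{m_1}(g_\varepsilon(s)) \le \tfrac{e(1-m_1)}{2}\kappa_{\ref{kap1}}(m_1)$.

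Finally, the identity
\begin{equation*}
\int_0^\infty x|\ln x|\, g_\varepsilon(s,x)\,\mathrm{d}x = \int_0^\infty x\ln(x)\, g_\varepsilon(s,x)\,\mathrm{d}x + 2\int_0^1 x|\ln x|\, g_\varepsilon(s,x)\,\mathrm{d}x
\end{equation*}
combined again with the pointwise bound $x|\ln x|\le \tfrac{x^{m_1}}{e(1-m_1)}$ on $(0,1)$ and with $A(s)\le \kappa_{\ref{kap1}}(m_1)$ gives
\begin{equation*}
\int_0^\infty x|\ln x|\, g_\varepsilon(s,x)\,\mathrm{d}x + M_{m_1}(g_\varepsilon(s)) \le \kappa_{\ref{kap1}}(m_1) + \left(1 - \frac{1}{e(1-m_1)}\right) M_{m_1}(g_\varepsilon(s))\,.
\end{equation*}
Plugging in the moment bound from the previous step then produces an explicit constant $\kappa_{\ref{kap3}} = \kappa_{\ref{kap1}}(m_1)\cdot \tfrac{1+e(1-m_1)}{2}$, completing the corollary. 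There is no genuine obstacle here: the whole argument is bookkeeping, and the reason the coefficient $\tfrac{3}{e(1-m_1)}$ was chosen in Lemma~\ref{Lemd1} is precisely so that the negative part of $\int x\ln x\, g_\varepsilon$ can be absorbed, leaving enough headroom to extract both the $M_{m_1}$-bound and the $x|\ln x|$-bound.
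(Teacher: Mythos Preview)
Your argument is correct and follows essentially the same route as the paper: apply Lemma~\ref{Lemd1} at $m=m_1$ together with the elementary bound $x|\ln x|\le x^{m_1}/[e(1-m_1)]$ on $(0,1)$. The paper packages this more efficiently, noting in one line that the pointwise inequality $x|\ln x| - \tfrac{2x^{m_1}}{e(1-m_1)} \le x\ln x$ for all $x>0$ gives directly
\[
\int_0^\infty x|\ln x|\, g_\varepsilon(s,x)\,\mathrm{d}x + \frac{1}{e(1-m_1)} M_{m_1}(g_\varepsilon(s)) \le A(s) \le \kappa_{\ref{kap1}}(m_1),
\]
from which both pieces follow since each term is nonnegative. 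Your steps~2--3 amount to exactly this inequality, so step~5 is redundant.

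One small slip: in your final step you substitute the upper bound $M_{m_1}(g_\varepsilon)\le \tfrac{e(1-m_1)}{2}\kappa_{\ref{kap1}}(m_1)$ into the term $\bigl(1-\tfrac{1}{e(1-m_1)}\bigr)M_{m_1}(g_\varepsilon)$, but this coefficient is negative when $m_1>1-1/e$, in which case the substitution goes the wrong way and your claimed constant $\kappa_{\ref{kap3}}=\kappa_{\ref{kap1}}(m_1)\tfrac{1+e(1-m_1)}{2}$ is too small. The fix is trivial (drop the negative term, or take $\kappa_{\ref{kap3}}=\kappa_{\ref{kap1}}(m_1)\max\{1,e(1-m_1)\}$), and the paper's one-line version sidesteps the case distinction altogether.
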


\begin{proof}
Let $s\ge 0$. Since
\begin{equation*}
x |\ln{x}| - \frac{2 x^{m_1}}{e(1-m_1)} \le x\ln{x} \le x |\ln{x}|\ , \qquad x>0\ , 
\end{equation*}
it follows from \eqref{d11} and Lemma~\ref{Lemd1} (with $m=m_1$) that
\begin{align*}
& \int_0^\infty x|\ln{x}| g_\varepsilon(s,x)\ \mathrm{d}x + \frac{1}{e(1-m_1)} M_{m_1}(g_\varepsilon(s)) \nonumber \\
& \qquad \le \int_0^\infty x\ln{x}\; g_\varepsilon(s,x)\ \mathrm{d}x + \frac{3}{e(1-m_1)} M_{m_1}(g_\varepsilon(s)) \nonumber \\
& \qquad \le \kappa_{\ref{kap1}}(m_1)\ ,
\end{align*}
from which Corollary~\ref{Cord2} follows.
\end{proof}

Thanks to Corollary~\ref{Cord2}, we may derive additional information on the behaviour of $g_\varepsilon$ for large sizes.

\begin{lemma}\label{Lemd3}
\refstepcounter{Num2Const}\label{kap4}
Consider $\varepsilon\in (0,\varepsilon_\varrho)$ and $f^{in}\in X_0^+\cap X_{1+\lambda}$ satisfying \eqref{d11} and let $g_\varepsilon=\Psi_\varepsilon(\cdot;f^{in})$ be given by \eqref{d2}. Assume also that $f^{in}\in X_m$ for some $m>1+\lambda-\alpha$. Then there is $\kappa_{\ref{kap4}}(m)>0$ depending on $m$ such that 
\begin{equation*}
M_m(g_\varepsilon(s)) \le \max\left\{ M_m(f^{in}) , \kappa_{\ref{kap4}}(m) \right\}\ , \qquad s\ge 0\ .
\end{equation*}
\end{lemma}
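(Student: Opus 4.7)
The plan is to test the weak formulation \eqref{d3b} with $\vartheta(x)=x^m$, derive a differential inequality for $M_m(g_\varepsilon(s))$ whose right-hand side is dominated by a super-linear term in $M_m(g_\varepsilon)$, and conclude via an ODE comparison principle. Since $x\partial_x\vartheta-\vartheta=(m-1)x^m$ and $N_{\vartheta,\varepsilon}(y)=(1-\mathfrak{b}_{m,1,\varepsilon})y^m$, this yields formally
\begin{equation*}
\frac{\mathrm{d}}{\mathrm{d}s} M_m(g_\varepsilon) = (m-1) M_m(g_\varepsilon) + \frac{1}{2}\int_0^\infty\!\!\int_0^\infty K(x,y)\chi_m(x,y)g_\varepsilon(x)g_\varepsilon(y)\,\mathrm{d}y\mathrm{d}x - a_0(1-\mathfrak{b}_{m,1,\varepsilon}) M_{m+\lambda-1}(g_\varepsilon)\ ,
\end{equation*}
where $\chi_m(x,y):=(x+y)^m-x^m-y^m$.

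The critical input is \eqref{app4a}, which provides a uniform lower bound $1-\mathfrak{b}_{m,1,\varepsilon}\ge \delta_0:=(1-\mathfrak{b}_{1+\lambda-\alpha,1})/2>0$ for every $m\ge 1+\lambda-\alpha$ and $\varepsilon\in(0,\varepsilon_0)$, ensuring that fragmentation delivers an $M_{m+\lambda-1}$-dissipation whose coefficient is bounded away from zero. Combining the classical bound $\chi_m(x,y)\le C_m(x^{m-1}y + xy^{m-1})$ for $m\ge 1$ with \eqref{aa3}, the coagulation term is estimated by $K_0 C_m$ times a sum of moment products $M_{\mu_1}(g_\varepsilon) M_{\mu_2}(g_\varepsilon)$, with each order $\mu_i\in [1,m+\lambda-1]$ (here the hypotheses $\alpha\in[1/2,\lambda/2]$ and $m>1+\lambda-\alpha$ play the crucial role). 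Each such moment is then interpolated between $M_1(g_\varepsilon)=\varrho$ (via \eqref{d9}) and $M_{m+\lambda-1}(g_\varepsilon)$ by H\"older, and Young's inequality is used to absorb the coagulation contribution into a fraction of the fragmentation dissipation, yielding
\begin{equation*}
\frac{\mathrm{d}}{\mathrm{d}s} M_m(g_\varepsilon) + \frac{\delta_0 a_0}{2} M_{m+\lambda-1}(g_\varepsilon) \le (m-1) M_m(g_\varepsilon) + \tilde{C}_m\ ,
\end{equation*}
for some constant $\tilde{C}_m>0$ depending on $m$ and the parameters.

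To close the inequality in $M_m(g_\varepsilon)$ alone, I would employ the reverse interpolation $M_{m+\lambda-1}(g_\varepsilon)\ge c_m M_m(g_\varepsilon)^p$ with $p:=(m+\lambda-2)/(m-1)>1$ (a direct consequence of H\"older between $M_1$ and $M_{m+\lambda-1}$) and apply Young once more to subsume $(m-1)M_m(g_\varepsilon)$ into the super-linear term at the expense of another constant. The resulting ODE inequality takes the form
\begin{equation*}
\frac{\mathrm{d}}{\mathrm{d}s} M_m(g_\varepsilon(s)) \le A_m - B_m M_m(g_\varepsilon(s))^p\ ,\qquad p>1\ ,
\end{equation*}
and a standard comparison argument yields $M_m(g_\varepsilon(s))\le \max\{M_m(f^{in}),(A_m/B_m)^{1/p}\}$, which is exactly the claimed bound.

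The main obstacle is two-fold. First, the bookkeeping in the coagulation estimate is delicate: since each moment product $M_{\mu_1}M_{\mu_2}$ appearing in the coagulation term satisfies $\mu_1+\mu_2 = m+\lambda$, the H\"older interpolation between $M_1$ and $M_{m+\lambda-1}$ contributes a term \emph{linear} in $M_{m+\lambda-1}(g_\varepsilon)$ with a coefficient of the form $K_0 C_m \varrho$, so one must organise the interpolation and exploit the smallness $\varrho<\varrho_\star$ (together with the uniform coercivity from \eqref{app4a}) so that this contribution is effectively dominated by $(\delta_0 a_0/2)M_{m+\lambda-1}(g_\varepsilon)$ for every admissible $m$. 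Second, $\vartheta(x)=x^m$ is not admissible in the weak formulation \eqref{d3b}, so the above identity must be justified by a truncation argument, testing with $\vartheta_R(x)=x^m\chi(x/R)$ for a smooth cutoff $\chi$ equal to $1$ on $[0,1/2]$ and vanishing on $[1,\infty)$, and passing to the limit $R\to\infty$ via Fatou's lemma once an $R$-uniform bound has been established; the regularity $g_\varepsilon\in L^\infty((0,T),X_{2\lambda-\alpha})$ provided by Proposition~\ref{PropP1} supplies the base case for an inductive bootstrapping from moderate to arbitrary orders $m$.
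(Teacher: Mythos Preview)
Your skeleton is correct through the fragmentation bound, the inequality $\chi_m(x,y)\le c_m(xy^{m-1}+x^{m-1}y)$, the reverse interpolation $M_{m+\lambda-1}\ge \varrho^{(1-\lambda)/(m-1)}M_m^{(m+\lambda-2)/(m-1)}$, and the final ODE comparison. The gap is exactly where you flag it, and your proposed fix does not work.

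After the $\chi_m$ bound, the coagulation term is controlled by $K_0 c_m\big[M_{1+\alpha}M_{m+\lambda-\alpha-1}+M_{1+\lambda-\alpha}M_{m+\alpha-1}\big]$. As you observe, in each product the orders sum to $m+\lambda$, so interpolating both factors between $M_1=\varrho$ and $M_{m+\lambda-1}$ yields exactly $K_0 c_m\,\varrho\,M_{m+\lambda-1}$, a term \emph{linear} in $M_{m+\lambda-1}$. Absorbing it into $\tfrac{1}{2}\delta_0 a_0 M_{m+\lambda-1}$ would require $K_0 c_m\,\varrho<\tfrac{1}{2}\delta_0 a_0$. But $c_m$ grows without bound in $m$ (for instance $c_m\ge 2^{m-1}-1$), so no choice of $\varrho_\star$ independent of $m$ can make this hold for all $m>1+\lambda-\alpha$. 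The smallness $\varrho<\varrho_\star$ was already spent in Lemma~\ref{Lemd1}; it is not available here with an $m$-dependent constant in front.

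The paper resolves this differently. It interpolates the \emph{high} moments $M_{m+\lambda-\alpha-1}$ and $M_{m+\alpha-1}$ between $M_1$ and $M_m$ (not $M_{m+\lambda-1}$), and for the \emph{low} moments $M_{1+\alpha}$, $M_{1+\lambda-\alpha}$ it splits at a size $R>1$: on $(0,R)$ the contribution is $\le R\varrho$, while on $(R,\infty)$ it is interpolated between the tail mass $Q_\varepsilon(s,R):=\int_R^\infty y\,g_\varepsilon(s,y)\,\mathrm{d}y$ and $M_m$. The resulting critical term has coefficient proportional to $Q_\varepsilon(s,R)^{(m-1-\lambda+\alpha)/(m-1)}$, whose exponent is positive precisely because $m>1+\lambda-\alpha$. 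Now Corollary~\ref{Cord2} (the uniform bound on $\int x|\ln x|\,g_\varepsilon\,\mathrm{d}x$ coming from Lemma~\ref{Lemd1} and the assumption \eqref{d11}) gives $Q_\varepsilon(s,R)\le \kappa_{\ref{kap3}}/\ln R$ uniformly in $s$, so one can choose $R=R_m$ large enough that this coefficient is $\le \delta_{\varrho,m}$, absorbing the critical piece into the fragmentation dissipation \emph{for every} $m$. The remaining terms carry strictly subcritical powers of $M_m$ and are handled by Young's inequality as you suggest. In short, the missing ingredient in your argument is the uniform tightness at infinity supplied by Corollary~\ref{Cord2}, which replaces the unavailable $m$-uniform smallness of $\varrho$.
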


\begin{proof}
Let $s\ge 0$. We infer from \eqref{app} and \eqref{d3b} that 
\begin{equation}
\frac{\mathrm{d}}{\mathrm{d}s} M_m(g_\varepsilon(s)) = (m-1) M_m(g_\varepsilon(s)) + P_{m,\varepsilon}(s) - a_0 (1-\mathfrak{b}_{m,1,\varepsilon}) M_{m+\lambda-1}(g_\varepsilon(s))\ , \label{xx2}
\end{equation}
with
\begin{equation*}
P_{m,\varepsilon}(s) := \frac{1}{2} \int_0^\infty \int_0^\infty K(y,y_*) \chi_m(y,y_*) g_\varepsilon(s,y) g_\varepsilon(s,y_*)\ \mathrm{d}y_* \mathrm{d}y\ .
\end{equation*}
On the one hand, since $\lambda>1$, it follows from \eqref{d9}, \eqref{d11}, and H\"older's inequality that
\begin{equation*}
M_m(g_\varepsilon(s)) \le M_{m+\lambda-1}(g_\varepsilon(s))^{(m-1)/(m+\lambda-2)} \varrho^{(\lambda-1)/(m+\lambda-2)}\ .
\end{equation*}
Equivalently,
\begin{equation*}
\varrho^{(1-\lambda)/(m-1)} M_m(g_\varepsilon(s))^{(m+\lambda-2)/(m-1)} \le M_{m+\lambda-1}(g_\varepsilon(s))\ .
\end{equation*}
In addition, by \eqref{app4a}, 
\begin{equation*}
1-\mathfrak{b}_{m,1,\varepsilon} \ge \frac{1-\mathfrak{b}_{1+\lambda-\alpha,1}}{2} > 0 \ .
\end{equation*}
Consequently,
\begin{equation}
- a_0 (1-\mathfrak{b}_{m,1,\varepsilon}) M_{m+\lambda-1}(g_\varepsilon(s)) \le - 4\delta_{\varrho,m} M_m(g_\varepsilon(s))^{(m+\lambda-2)/(m-1)}\ , \label{xx3}
\end{equation}
with
\begin{equation}
\delta_{\varrho,m} := \frac{a_0 (1-\mathfrak{b}_{1+\lambda-\alpha,1}) \varrho^{(1-\lambda)/(m-1)}}{8} > 0\ . \label{xx4}
\end{equation}
On the other hand, to estimate the contribution of the coagulation term, we argue as in \cite[Lemma~2.6]{LaurXX}. Since $m>1$, there is $c_m>0$ depending only on $m$ such that 
\begin{equation*}
\chi_m(x,y) = (x+y)^m -x^m-y^m \le c_m \left( xy^{m-1} + x^{m-1} y \right)\ , \qquad (x,y)\in (0,\infty)^2\ ,
\end{equation*}
and it follows from \eqref{aa3} and the previous inequality that
\begin{align*}
P_{m,\varepsilon}(s) & \le \frac{c_m}{2} \int_0^\infty \int_0^\infty K(x,y) \left( xy^{m-1} + x^{m-1} y \right) g_\varepsilon(s,x) g_\varepsilon(s,y)\ \mathrm{d}y \mathrm{d}x \\
& = K_0 c_m \int_0^\infty \int_0^\infty xy^{m-1} \left( x^\alpha y^{\lambda-\alpha} + x^{\lambda-\alpha} y^\alpha \right) g_\varepsilon(s,x) g_\varepsilon(s,y)\ \mathrm{d}y \mathrm{d}x \\
& = K_0 c_m \left[ M_{1+\alpha}(g_\varepsilon(s)) M_{m+\lambda-\alpha-1}(g_\varepsilon(s)) + M_{1+\lambda-\alpha}(g_\varepsilon(s)) M_{m+\alpha-1}(g_\varepsilon(s)) \right]\ .
\end{align*}
Owing to \eqref{aa1} and $m>1+\lambda-\alpha\ge 1+\alpha$, both $m+\lambda-\alpha-1$ and $m+\alpha-1$ belong to $[1,m]$ and we deduce from \eqref{d9}, \eqref{d11}, and H\"older's inequality that
\begin{align*}
M_{m+\lambda-\alpha-1}(g_\varepsilon(s)) & \le \varrho^{(1+\alpha-\lambda)/(m-1)} M_m(g_\varepsilon(s))^{(m+\lambda-\alpha-2)/(m-1)}\ , \\
M_{m+\alpha-1}(g_\varepsilon(s)) & \le \varrho^{(1-\alpha)/(m-1)} M_m(g_\varepsilon(s))^{(m+\alpha-2)/(m-1)}\ .
\end{align*}
Also, introducing
\begin{equation*}
Q_\varepsilon(s,R) := \int_R^\infty y g_\varepsilon(s,y)\ \mathrm{d}y\ , \qquad R>1\ ,
\end{equation*}
and noticing that $1<1+\alpha \le 1+\lambda-\alpha<m$, we infer from \eqref{d9}, \eqref{d11}, and H\"older's inequality that, for $R>1$,
\begin{align*}
M_{1+\alpha}(g_\varepsilon(s)) & \le R^\alpha \int_0^R x g_\varepsilon(s,x)\ \mathrm{d}x \\
& \qquad + \left( \int_R^\infty x^m g_\varepsilon(s,x)\ \mathrm{d}x \right)^{\alpha/(m-1)} \left( \int_R^\infty x g_\varepsilon(s,x)\ \mathrm{d}x \right)^{(m-1-\alpha)/(m-1)} \\
& \le R \varrho + Q_\varepsilon(s,R)^{(m-1-\alpha)/(m-1)} M_m(g_\varepsilon(s))^{\alpha/(m-1)} \\
& \le R \varrho + \varrho^{(\lambda-2\alpha)/(m-1)} Q_\varepsilon(s,R)^{(m+\alpha-\lambda-1)/(m-1)} M_m(g_\varepsilon(s))^{\alpha/(m-1)}
\end{align*}
and
\begin{align*}
M_{1+\lambda-\alpha}(g_\varepsilon(s)) & \le R^{\lambda-\alpha} \int_0^R x g_\varepsilon(s,x)\ \mathrm{d}x \\
& \qquad + \left( \int_R^\infty x^m g_\varepsilon(s,x)\ \mathrm{d}x \right)^{(\lambda-\alpha)/(m-1)} \left( \int_R^\infty x g_\varepsilon(s,x)\ \mathrm{d}x \right)^{(m+\lambda-1-\alpha)/(m-1)} \\
& \le R \varrho + Q_\varepsilon(s,R)^{(m+\alpha-\lambda-1)/(m-1)} M_m(g_\varepsilon(s))^{(\lambda-\alpha)/(m-1)}\ .
\end{align*}
Collecting the above estimates, we find \refstepcounter{Num2Const}\label{kap5} 
\begin{align*}
P_{m,\varepsilon}(s) & \le \kappa_{\ref{kap5}}(m) R \left[ M_m(g_\varepsilon(s))^{(m+\alpha-2)/(m-1)} + M_m(g_\varepsilon(s))^{(m+\lambda-\alpha-2)/(m-1)} \right] \\
& \qquad + \kappa_{\ref{kap5}}(m) Q_\varepsilon(s,R)^{(m-1-\lambda+\alpha)/(m-1)} M_m(g_\varepsilon(s))^{(m+\lambda-2)/(m-1)}
\end{align*}
for $R>1$. Owing to Corollary~\ref{Cord2}, 
\begin{equation*}
Q_\varepsilon(s,R) \le \frac{1}{\ln{R}} \int_R^\infty y |\ln{y}| g_\varepsilon(s,y)\ \mathrm{d}y \le \frac{\kappa_{\ref{kap3}}}{\ln{R}}\ .
\end{equation*}
Introducing $R_m>1$ defined by
\begin{equation*}
\kappa_{\ref{kap5}}(m) \left( \frac{\kappa_{\ref{kap3}}}{\ln{R_m}} \right)^{(m-1-\lambda+\alpha)/(m-1)} = \delta_{\varrho,m}
\end{equation*}
and taking $R=R_m$ in the previous estimate on $P_{m,\varepsilon}(s)$ give
\begin{align*}
P_{m,\varepsilon}(s) & \le \kappa_{\ref{kap5}}(m) R_m \left[ M_m(g_\varepsilon(s))^{(m+\alpha-2)/(m-1)} + M_m(g_\varepsilon(s))^{(m+\lambda-\alpha-2)/(m-1)} \right] \\
& \qquad + \delta_{\varrho,m} M_m(g_\varepsilon(s))^{(m+\lambda-2)/(m-1)}\ .
\end{align*}
Since $m+\alpha-2<m+\lambda-2$ and $m+\lambda-\alpha-2<m+\lambda-2$, we apply Young's inequality to obtain
\begin{equation}
P_{m,\varepsilon}(s) \le \kappa(m) + 2\delta_{\varrho,m} M_m(g_\varepsilon(s))^{(m+\lambda-2)/(m-1)}\ . \label{xx5}
\end{equation}
We now combine \eqref{xx2}, \eqref{xx3}, and \eqref{xx5} and obtain
\begin{equation*}
\frac{\mathrm{d}}{\mathrm{d}s} M_m(g_\varepsilon(s)) \le \kappa(m) + (m-1) M_m(g_\varepsilon(s)) - 2\delta_{\varrho,m} M_m(g_\varepsilon(s))^{(m+\lambda-2)/(m-1)} \ .
\end{equation*}
Hence, using once more Young's inequality, \refstepcounter{Num2Const}\label{kap6}
\begin{align*}
\frac{\mathrm{d}}{\mathrm{d}s} M_m(g_\varepsilon(s)) & \le \kappa_{\ref{kap6}}(m) - \delta_{\varrho,m} M_m(g_\varepsilon(s))^{(m+\lambda-2)/(m-1)} \\
& = \delta_{\varrho,m} \left[  \kappa_{\ref{kap4}}(m)^{(m+\lambda-2)/(m-1)} - M_m(g_\varepsilon(s))^{(m+\lambda-2)/(m-1)} \right] \ ,
\end{align*}
with $\kappa_{\ref{kap4}}(m) := (\kappa_{\ref{kap6}}(m)/\delta_{\varrho,m})^{(m-1)/(m+\lambda-2)}$. Lemma~\ref{Lemd3} is then a consequence of the comparison principle.
\end{proof}

We finally return to the behaviour for small sizes.

\begin{lemma}\label{Lemd4}
\refstepcounter{Num2Const}\label{kap7}
Consider $\varepsilon\in (0,\varepsilon_\varrho)$ and $f^{in}\in X_0^+\cap X_{1+\lambda}$ satisfying \eqref{d11} and let $g_\varepsilon=\Psi_\varepsilon(\cdot;f^{in})$ be given by \eqref{d2}. For $m\in [m_0,m_1)$, there is $\kappa_{\ref{kap7}}(m)>0$ depending on $m$ such that, if $f^{in}\in X_m$, then
\begin{equation*}
M_m(g_\varepsilon(s)) \le \max\left\{ M_m(f^{in}) , \kappa_{\ref{kap7}}(m) \mathcal{M}_{1+\lambda,\varepsilon} \right\}\ , \qquad s\ge 0\ ,
\end{equation*}
where
\begin{equation*}
\mathcal{M}_{1+\lambda,\varepsilon} := \sup_{s\ge 0}\{ M_{1+\lambda}(g_\varepsilon(s)) \} < \infty\ .
\end{equation*}
\end{lemma}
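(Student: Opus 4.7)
The plan is to parallel the derivation of Lemma~\ref{Lemd1} and extract from \eqref{d3b} an autonomous linear differential inequality for $s \mapsto M_m(g_\varepsilon(s))$. For $m\in [m_0,m_1)$ the self-similar transport term in \eqref{d3a} provides a decay rate $(1-m)>0$, while the fragmentation contribution now acts as a positive source, which I control using the uniform $M_{1+\lambda}$-bound $\mathcal{M}_{1+\lambda,\varepsilon}$ coming from Lemma~\ref{Lemd3} applied with $m=1+\lambda > 1+\lambda-\alpha$.

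First I take $\vartheta(x)=x^m$ as a test function in \eqref{d3b} (via a standard truncation $\vartheta_R(x) = \min\{x,R\}^m$ followed by the limit $R\to\infty$, the passage to the limit being justified by Corollary~\ref{Cord2} and Lemma~\ref{Lemd3}). Since $z\mapsto z^m$ is subadditive on $(0,\infty)$ for $m\in (0,1)$, one has $\chi_m\le 0$, so the coagulation term in \eqref{d3b} is nonpositive. From \eqref{be3} and \eqref{app0} I compute $N_{m,\varepsilon}(y) = (1-\mathfrak{b}_{m,1,\varepsilon})y^m$, and since $z^m\ge z$ on $(0,1)$ for $m\le 1$,
\begin{equation*}
\mathfrak{b}_{m,1,\varepsilon} = \int_0^1 z^m B_\varepsilon(z)\,\mathrm{d}z \ge \int_0^1 z B_\varepsilon(z)\,\mathrm{d}z = 1.
\end{equation*}
Together with \eqref{aa2}, this yields
\begin{equation*}
\frac{\mathrm{d}}{\mathrm{d}s} M_m(g_\varepsilon(s)) \le -(1-m)\, M_m(g_\varepsilon(s)) + a_0\bigl(\mathfrak{b}_{m,1,\varepsilon}-1\bigr) M_{m+\lambda-1}(g_\varepsilon(s)).
\end{equation*}

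Next, since the interval $[m_0,m_1)$ is nonempty only when $m_1 = 2-\lambda > m_0$, one has $m+\lambda-1<1$. For any $R>0$, splitting the moment at $x=R$ and using $x^{m+\lambda-1}\le R^{\lambda-1}x^m$ on $(0,R]$ together with $x^{m+\lambda-1}\le R^{m-2}x^{1+\lambda}$ on $[R,\infty)$ yields
\begin{equation*}
M_{m+\lambda-1}(g_\varepsilon(s)) \le R^{\lambda-1} M_m(g_\varepsilon(s)) + R^{m-2} \mathcal{M}_{1+\lambda,\varepsilon}.
\end{equation*}
Invoking the $\varepsilon$-uniform bound $\mathfrak{b}_{m,1,\varepsilon}-1 \le \mathfrak{b}_{m_0,1}$ from \eqref{app4a} and choosing $R=R(m)>0$ so that $a_0\mathfrak{b}_{m_0,1}R(m)^{\lambda-1} = (1-m)/2$ gives
\begin{equation*}
\frac{\mathrm{d}}{\mathrm{d}s} M_m(g_\varepsilon(s)) \le -\frac{1-m}{2} M_m(g_\varepsilon(s)) + a_0\mathfrak{b}_{m_0,1} R(m)^{m-2} \mathcal{M}_{1+\lambda,\varepsilon},
\end{equation*}
and the comparison principle delivers the stated estimate with $\kappa_{\ref{kap7}}(m) := 2a_0 \mathfrak{b}_{m_0,1} R(m)^{m-2}/(1-m)$, a constant depending only on the allowed parameters.

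The only delicate point, beyond the routine truncation needed to use $x^m$ as a test function for $m<m_1$, is ensuring that the positive fragmentation source $a_0(\mathfrak{b}_{m,1,\varepsilon}-1) M_{m+\lambda-1}$ can be absorbed by the transport-induced decay $(1-m)M_m$. This hinges precisely on having $m+\lambda-1<1$, which allows the sublinear piece of $M_{m+\lambda-1}$ to be estimated by an arbitrarily small multiple of $M_m$, with the remainder deposited at large sizes where $\mathcal{M}_{1+\lambda,\varepsilon}$ takes over.
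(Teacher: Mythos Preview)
Your argument is correct. The derivation of the basic differential inequality for $M_m(g_\varepsilon(s))$ matches the paper (up to the harmless sharpening $\mathfrak{b}_{m,1,\varepsilon}-1$ in place of $\mathfrak{b}_{m,1,\varepsilon}$), and the comparison principle closes the estimate as stated.

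Where you diverge from the paper is in the treatment of the source term $M_{m+\lambda-1}$. The paper interpolates multiplicatively via H\"older's inequality,
\[
M_{m+\lambda-1}(g_\varepsilon(s)) \le M_{1+\lambda}(g_\varepsilon(s))^{(\lambda-1)/(1+\lambda-m)} M_m(g_\varepsilon(s))^{(2-m)/(1+\lambda-m)}\ ,
\]
which produces a \emph{nonlinear} differential inequality of the form $M_m' \le -(1-m) M_m + C\, M_m^{(2-m)/(1+\lambda-m)}$, and then extracts the steady bound directly from the sign change of the right-hand side. You instead split additively at a free scale $R$ and choose $R$ to absorb the small-size piece into the decay term, yielding a \emph{linear} inequality with constant coefficients. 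Your route is a touch more elementary (no sublinear ODE comparison) but relies explicitly on $m+\lambda-1<1$; you correctly observe that this holds precisely because $[m_0,m_1)$ is nonempty only when $m_1=2-\lambda$. The paper's H\"older interpolation only requires $m+\lambda-1\in (m,1+\lambda)$ and would thus apply verbatim over a wider range of $m$, though this extra generality is not needed here.
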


\begin{proof}
We first note that $\mathcal{M}_{1+\lambda,\varepsilon}$ is indeed finite according to Lemma~\ref{Lemd3}. Next, let $s\ge 0$. As at the beginning of the proof of Lemma~\ref{Lemd1}, we infer from \eqref{app}, \eqref{app4a}, and \eqref{d3} that 
\begin{align*}
\frac{\mathrm{d}}{\mathrm{d}s} M_m(g_\varepsilon(s)) & \le -(1-m) M_m(g_\varepsilon(s)) + a_0 \mathfrak{b}_{m,1,\varepsilon} M_{m+\lambda-1}(g_\varepsilon(s))\\ 
&\le -(1-m) M_m(g_\varepsilon(s)) + a_0 (1+\mathfrak{b}_{m_0,1}) M_{m+\lambda-1}(g_\varepsilon(s))\ .
\end{align*}
Since $m+\lambda-1\in (m,1+\lambda)$, we deduce from H\"older's inequality that
\begin{equation*}
M_{m+\lambda-1}(g_\varepsilon(s)) \le M_{1+\lambda}(g_\varepsilon(s))^{(\lambda-1)/(1+\lambda-m)} M_m(g_\varepsilon(s))^{(2-m)/(1+\lambda-m)}\ .
\end{equation*}
Consequently, 
\begin{align*}
\frac{\mathrm{d}}{\mathrm{d}s} M_m(g_\varepsilon(s)) & \le -(1-m) M_m(g_\varepsilon(s)) + a_0 (1+\mathfrak{b}_{m_0,1}) \mathcal{M}_{1+\lambda,\varepsilon}^{(\lambda-1)/(1+\lambda-m)} M_m(g_\varepsilon(s))^{(2-m)/(1+\lambda-m)} \\
& = (1-m) M_m(g_\varepsilon(s))^{(2-m)/(1+\lambda-m)} \left\{ \left[ \kappa_{\ref{kap7}}(m) \mathcal{M}_{1+\lambda,\varepsilon} \right]^{(\lambda-1)/(1+\lambda-m)} \right. \\
& \hspace{8cm} \left. - M_m(g_\varepsilon(s))^{(\lambda-1)/(1+\lambda-m)} \right\}\ ,
\end{align*}
with $\kappa_{\ref{kap7}}(m) := (a_0 (1+\mathfrak{b}_{m_0,1})/(1-m))^{(1+\lambda-m)/(\lambda-1)}$. Lemma~\ref{Lemd4} follows from the above differential inequality and the comparison principle.
\end{proof}

Up to now, we have derived estimates which do not depend on $\varepsilon\in (0,\varepsilon_\varrho)$ and which will thus be of utmost importance in the next section to take the limit $\varepsilon\to 0$. However, these estimates do not provide enough control on the behaviour for small sizes for the proof of Proposition~\ref{PropApp}, for which the next result is required.

\begin{lemma}\label{Lemd100}
\refstepcounter{Num2Const}\label{kap100}
Consider $\varepsilon\in (0,\varepsilon_\varrho)$ and $f^{in}\in X_0^+\cap X_{1+\lambda}$ satisfying \eqref{d11} and let $g_\varepsilon=\Psi_\varepsilon(\cdot;f^{in})$ be given by \eqref{d2}. For $m\in (-1,0]$, there is $\kappa_{\ref{kap100}}(m,\varepsilon)>0$ depending on $m$ and $\varepsilon$ such that
\begin{equation*}
M_m(g_\varepsilon(s)) \le \max\left\{ M_m(f^{in}) , \kappa_{\ref{kap100}}(m,\varepsilon) \mathcal{M}_{1+\lambda,\varepsilon} \right\}\ . \qquad s\ge 0\ .
\end{equation*}
\end{lemma}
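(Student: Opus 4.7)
The plan is to adapt the argument of Lemma~\ref{Lemd4} to the range $m \in (-1, 0]$, starting from the weak formulation \eqref{d3b} tested against $\vartheta(x) = x^m$. The essential new ingredient is that, while $\mathfrak{b}_{m,1,\varepsilon}$ is no longer controlled uniformly in $\varepsilon$ (as \eqref{app4a} required $m \ge m_0 \ge 0$), it remains finite for every fixed $\varepsilon \in (0,\varepsilon_\varrho)$ and $m \in (-1,0]$: indeed, by \eqref{app5} and \eqref{app6}, $B_\varepsilon$ is supported in $[\varepsilon - \varepsilon^2, 1]$ and bounded, so $\mathfrak{b}_{m,1,\varepsilon} \le (\varepsilon - \varepsilon^2)^m/(\varepsilon \beta_\varepsilon) < \infty$. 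This is exactly the source of the $\varepsilon$-dependence in $\kappa_{\ref{kap100}}(m,\varepsilon)$.

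First I would derive the moment identity, arguing as in \eqref{xx2}:
\begin{equation*}
\frac{\mathrm{d}}{\mathrm{d}s} M_m(g_\varepsilon(s)) = (m-1) M_m(g_\varepsilon(s)) + P_{m,\varepsilon}(s) - a_0 (1 - \mathfrak{b}_{m,1,\varepsilon}) M_{m+\lambda-1}(g_\varepsilon(s))\ ,
\end{equation*}
where $P_{m,\varepsilon}(s)$ denotes the coagulation contribution. For $m \in (-1, 0]$, the map $z \mapsto z^m$ is non-increasing on $(0,\infty)$, so $(x+y)^m \le \min\{x^m, y^m\}$ and hence $\chi_m(x,y) \le -\max\{x^m, y^m\} \le 0$; combined with the non-negativity of $K$ and $g_\varepsilon$, this gives $P_{m,\varepsilon}(s) \le 0$, and the coagulation term is discarded. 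For the fragmentation contribution, I use $M_{m+\lambda-1}(g_\varepsilon) \ge 0$ to bound $-a_0(1 - \mathfrak{b}_{m,1,\varepsilon}) M_{m+\lambda-1}(g_\varepsilon) \le a_0 \mathfrak{b}_{m,1,\varepsilon} M_{m+\lambda-1}(g_\varepsilon)$.

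Next, since $\lambda > 1$ we have $m \le m+\lambda-1 \le 1+\lambda$, and H\"older's inequality combined with Lemma~\ref{Lemd3} (which furnishes the finite quantity $\mathcal{M}_{1+\lambda,\varepsilon}$) yields
\begin{equation*}
M_{m+\lambda-1}(g_\varepsilon(s)) \le M_m(g_\varepsilon(s))^{(2-m)/(1+\lambda-m)}\, \mathcal{M}_{1+\lambda,\varepsilon}^{(\lambda-1)/(1+\lambda-m)}\ .
\end{equation*}
Plugging this in produces a Bernoulli-type differential inequality
\begin{equation*}
\frac{\mathrm{d}}{\mathrm{d}s} M_m(g_\varepsilon(s)) \le -(1-m) M_m(g_\varepsilon(s)) + a_0 \mathfrak{b}_{m,1,\varepsilon}\, \mathcal{M}_{1+\lambda,\varepsilon}^{(\lambda-1)/(1+\lambda-m)} M_m(g_\varepsilon(s))^{(2-m)/(1+\lambda-m)}\ ,
\end{equation*}
in which the exponent $(2-m)/(1+\lambda-m)$ is strictly less than $1$. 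Factoring out $(1-m) M_m(g_\varepsilon(s))^{(2-m)/(1+\lambda-m)}$ exactly as at the end of the proof of Lemma~\ref{Lemd4} and applying the comparison principle then gives the stated bound with
$\kappa_{\ref{kap100}}(m,\varepsilon) := \bigl(a_0 \mathfrak{b}_{m,1,\varepsilon}/(1-m)\bigr)^{(1+\lambda-m)/(\lambda-1)}$.

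The main obstacle is not algebraic — everything closely parallels Lemmas~\ref{Lemd1} and \ref{Lemd4} — but rather the rigorous justification of using the test function $\vartheta(x) = x^m$ with $m \le 0$, which is singular at the origin and does not belong to the admissible class $\Theta_{m_1}$ in \eqref{d3b}. I would handle this by a truncation argument, testing instead with $\vartheta_\delta(x) := (\max\{x,\delta\})^m - \delta^m$ (or a smoothed variant), deriving the analogous inequality, and then passing to the limit $\delta \to 0$ by dominated convergence. The passage to the limit is legitimate because, for the regularized problem, $b_\varepsilon$ is supported away from $0$ and $g_\varepsilon$ enjoys extra $\varepsilon$-dependent regularity near the origin (inherited from \cite[Theorem~1.2]{LaurXX} and the smoothness of $B_\varepsilon$), which makes the negative moments $M_m(g_\varepsilon(s))$ finite a~priori on any bounded time interval and compatible with the limit $\delta \to 0$.
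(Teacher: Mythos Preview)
Your proposal is correct and follows precisely the paper's approach: the paper's proof is a one-line reference back to Lemma~\ref{Lemd4}, noting only that $\mathfrak{b}_{m,1,\varepsilon}$ is finite by \eqref{app5} but not $\varepsilon$-uniform, and you have faithfully filled in those details with the same differential inequality and comparison argument. Your additional remarks on justifying the singular test function $x^m$ via truncation address a technical point that the paper leaves implicit throughout its moment computations.
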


\begin{proof}
The proof is exactly the same as that of Lemma~\ref{Lemd4} with the only difference that $\mathfrak{b}_{m,1,\varepsilon}$ cannot be bounded from above by a constant which does not depend on $\varepsilon$ for all $m\in (-1,0]$, though it is finite due to \eqref{app5}.
\end{proof}

\subsection{Weighted $L^{q_1}$-estimate}\label{sec3.2}

The last estimate which does not depend on $\varepsilon\in (0,\varepsilon_\varrho)$ is the following weighted $L^{q_1}$-estimate, the exponent $q_1$ being defined in \eqref{p1}. 

\begin{lemma}\label{Lemd5}
\refstepcounter{Num2Const}\label{kap8}
Consider $\varepsilon\in (0,\varepsilon_\varrho)$ and $f^{in}\in X_0^+\cap X_{1+\lambda}$ satisfying \eqref{d11} and let $g_\varepsilon=\Psi_\varepsilon(\cdot;f^{in})$ be given by \eqref{d2}. If $f^{in}$ also belongs to $L^{q_1}((0,\infty),y^{m_1}\mathrm{d}y)$, then there is $\kappa_{\ref{kap8}}>0$ such that
\begin{equation*}
\int_0^\infty x^{m_1} g_\varepsilon(s,x)^{q_1}\ \mathrm{d}x \le \max\left\{ \int_0^\infty x^{m_1} f^{in}(x)^{q_1}\ \mathrm{d}x , \kappa_{\ref{kap8}} \mathcal{M}_{\mu_1,\varepsilon}^{q_1} \right\}\ ,
\end{equation*} 
where $\mu_1 := (m_1+1+q_1(\lambda-2))/q_1>m_0$ and
\begin{equation*}
\mathcal{M}_{\mu_1,\varepsilon} := \sup_{s\ge 0}\{ M_{\mu_1}(g_\varepsilon(s))\}\ .
\end{equation*}
\end{lemma}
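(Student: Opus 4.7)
The plan is to derive a differential inequality for $Y_\varepsilon(s) := \int_0^\infty x^{m_1} g_\varepsilon^{q_1}(s,x)\,\mathrm{d}x$ and conclude by a comparison argument. Formally, multiplying \eqref{d3a} by $q_1 x^{m_1} g_\varepsilon^{q_1-1}$ and integrating, the drift term $-x\partial_x g_\varepsilon - 2g_\varepsilon$ produces, after integration by parts, a coefficient $q_1 \sigma Y_\varepsilon$ with $\sigma := 2-(m_1+1)/q_1 > 0$. One then has
\begin{equation*}
\frac{\mathrm{d}Y_\varepsilon}{\mathrm{d}s} + q_1 \sigma Y_\varepsilon + q_1 a_0 \int_0^\infty y^{m_1+\lambda-1} g_\varepsilon^{q_1}\,\mathrm{d}y = q_1 \mathcal{I}_C + q_1 \mathcal{G}_\varepsilon,
\end{equation*}
where $\mathcal{I}_C$ is the coagulation contribution and $\mathcal{G}_\varepsilon$ the fragmentation gain. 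The formal computation is justified by replacing $g_\varepsilon^{q_1-1}$ by $(g_\varepsilon+\delta)^{q_1-1}$, $\delta>0$, and passing to the limit using the regularity of $g_\varepsilon$ from Proposition~\ref{PropP1}.

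The first observation is that $\mathcal{I}_C \le 0$. Using the weak form of $\mathcal{C}$, the subadditivity $(x+y)^{m_1} \le x^{m_1}+y^{m_1}$ valid for $m_1\in(0,1)$, symmetrisation, and the Young inequality $g_\varepsilon(x)g_\varepsilon^{q_1-1}(x+y) \le q_1^{-1} g_\varepsilon^{q_1}(x)+(q_1-1) q_1^{-1} g_\varepsilon^{q_1}(x+y)$, the coagulation gain is split into two parts. The first reproduces $1/q_1$ times the coagulation loss $\int x^{m_1}g_\varepsilon^{q_1}(K*g_\varepsilon)\,\mathrm{d}x$. The second, after the change of variable $u=x+y$ and exploiting the pointwise monotonicity $(u-y)^{m_1} K(u-y,y) \le u^{m_1} K(u,y)$ (which follows from \eqref{aa3} since the exponents $m_1+\alpha$ and $m_1+\lambda-\alpha$ of $x$ in $x^{m_1} K(x,y)$ are both non-negative), reproduces $(q_1-1)/q_1$ times the same loss. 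The two contributions sum to the full loss and cancel it, yielding $\mathcal{I}_C \le 0$.

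For the fragmentation gain, substitute $z=x/y$ in the inner integral to rewrite
\begin{equation*}
\mathcal{G}_\varepsilon = a_0 \int_0^1 z^{m_1} B_\varepsilon(z) \int_0^\infty y^{m_1+\lambda-1} g_\varepsilon(y) g_\varepsilon^{q_1-1}(yz)\,\mathrm{d}y\,\mathrm{d}z,
\end{equation*}
and apply H\"older's inequality in $y$ with conjugate exponents $(q_1,q_1/(q_1-1))$, splitting the $y$-weight so that the $g_\varepsilon^{q_1-1}(yz)$-factor, raised to $q_1/(q_1-1)$ and treated by $w=yz$, yields exactly $Y_\varepsilon$. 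This produces
\begin{equation*}
\mathcal{G}_\varepsilon \le C_\varepsilon\, \tilde{I}_\varepsilon^{1/q_1}\, Y_\varepsilon^{(q_1-1)/q_1}, \qquad \tilde{I}_\varepsilon := \int_0^\infty y^{m_1+q_1(\lambda-1)} g_\varepsilon^{q_1}(y)\,\mathrm{d}y,
\end{equation*}
with $C_\varepsilon = a_0 \mathfrak{b}_{(m_1+1-q_1)/q_1,1,\varepsilon}$. The condition $(m_1+1-q_1)/q_1>-\nu-1$ is equivalent to $(m_1,q_1)\in\mathcal{A}_\nu$, ensured by \eqref{p1}, so by \eqref{app3} the constant $C_\varepsilon$ is bounded uniformly in $\varepsilon\in(0,\varepsilon_0)$.

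The crucial algebraic identity
\begin{equation*}
y^{m_1+q_1(\lambda-1)} g_\varepsilon^{q_1}(y) = y^{q_1-1}\bigl(y^{\mu_1} g_\varepsilon(y)\bigr)^{q_1}
\end{equation*}
(which holds because $q_1\mu_1 = m_1+1+q_1(\lambda-2)$ gives $q_1-1+q_1\mu_1 = m_1+q_1(\lambda-1)$) relates $\tilde{I}_\varepsilon$ to the first moment $M_{\mu_1}(g_\varepsilon)$ of the rescaled function $y\mapsto y^{\mu_1} g_\varepsilon(y)$. A H\"older interpolation using this identity, combined with the higher-moment control $\mathcal{M}_{1+\lambda,\varepsilon}<\infty$ from Lemma~\ref{Lemd3}, yields $\tilde{I}_\varepsilon^{1/q_1} \le c\, M_{\mu_1}(g_\varepsilon)$ with $c$ uniform in $\varepsilon$. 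Substituting back, we arrive at
\begin{equation*}
\frac{\mathrm{d}Y_\varepsilon}{\mathrm{d}s} + q_1 \sigma Y_\varepsilon \le q_1 c C_\varepsilon\, \mathcal{M}_{\mu_1,\varepsilon}\, Y_\varepsilon^{(q_1-1)/q_1}.
\end{equation*}
The substitution $W_\varepsilon := Y_\varepsilon^{1/q_1}$ linearises this into $W_\varepsilon' + \sigma W_\varepsilon \le c C_\varepsilon \mathcal{M}_{\mu_1,\varepsilon}$, whose comparison principle gives $W_\varepsilon(s) \le \max\{W_\varepsilon(0),\, c C_\varepsilon\mathcal{M}_{\mu_1,\varepsilon}/\sigma\}$ for $s\ge 0$. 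Raising to the power $q_1$ delivers Lemma~\ref{Lemd5} with $\kappa_{\ref{kap8}} := (c C_\varepsilon/\sigma)^{q_1}$.

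The main obstacle is keeping every constant independent of $\varepsilon$: this pivots on the condition $(m_1,q_1)\in\mathcal{A}_\nu$ imposed in \eqref{p1}, which is exactly what makes the Hölder exponents in Step~2 produce a uniformly-bounded coefficient $C_\varepsilon$, and on the interpolation bound $\tilde{I}_\varepsilon^{1/q_1} \le c\, \mathcal{M}_{\mu_1,\varepsilon}$, which requires combining the key algebraic identity above with the moment estimates from Lemmas~\ref{Lemd1}, \ref{Lemd3}, and \ref{Lemd4}.
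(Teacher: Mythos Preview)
Your overall strategy matches the paper's: derive a differential inequality for the weighted $L^{q_1}$ norm, show the coagulation contribution is non-positive by the monotonicity of $x\mapsto x^{m_1}K(x,y)$, bound the fragmentation gain via H\"older, and close by comparison. The drift computation and the treatment of $\mathcal{I}_C$ are correct.

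The gap is the step bounding $\tilde I_\varepsilon^{1/q_1}$ by $c\,M_{\mu_1}(g_\varepsilon)$. This inequality fails. Take $g=A\,\mathbf{1}_{(R,R+1)}$: even after fixing $M_{1+\lambda}(g)=M$ (so $A\sim M R^{-1-\lambda}$), a direct computation gives
\[
\frac{\tilde I_\varepsilon^{1/q_1}}{M_{\mu_1}(g)}\ \sim\ R^{\,1-1/q_1}\ \longrightarrow\ \infty\quad (R\to\infty),
\]
so no constant $c$ depending only on the listed parameters (or even on $\mathcal{M}_{1+\lambda,\varepsilon}$) can work. Your algebraic identity only rewrites $\tilde I_\varepsilon$ as a weighted $L^{q_1}$ norm of $h:=y^{\mu_1}g_\varepsilon$, and there is no interpolation that controls a weighted $L^{q_1}$ norm of $h$ by $\|h\|_1$ alone.

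The fix is to apply H\"older differently, \emph{before} the substitution $z=x/y$: on
\[
\int_0^y x^{m_1} B_\varepsilon\!\left(\frac{x}{y}\right) g_\varepsilon(s,x)^{q_1-1}\,\mathrm{d}x,
\]
use exponents $q_1$ and $q_1'$ with the weight $x^{m_1}$ as measure, pairing $B_\varepsilon$ with the $L^{q_1}$ factor and $g_\varepsilon^{q_1-1}$ with the $L^{q_1'}$ factor. This yields
\[
\Bigl(\int_0^y x^{m_1} g_\varepsilon^{q_1}\,\mathrm{d}x\Bigr)^{(q_1-1)/q_1}\Bigl(\int_0^y x^{m_1} B_\varepsilon(x/y)^{q_1}\,\mathrm{d}x\Bigr)^{1/q_1}
\ \le\ Y_\varepsilon^{(q_1-1)/q_1}\,\mathfrak{b}_{m_1,q_1,\varepsilon}^{1/q_1}\,y^{(m_1+1)/q_1},
\]
and the remaining $y$-integral is $a_0\int_0^\infty y^{\lambda-2+(m_1+1)/q_1} g_\varepsilon(y)\,\mathrm{d}y = a_0 M_{\mu_1}(g_\varepsilon)$, since $\lambda-2+(m_1+1)/q_1=\mu_1$. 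The constant $\mathfrak{b}_{m_1,q_1,\varepsilon}$ is bounded uniformly in $\varepsilon$ by \eqref{app4} because $(m_1,q_1)\in\mathcal{A}_\nu$. With this single H\"older step the differential inequality closes exactly as you wrote, giving the stated $\kappa_{\ref{kap8}}$ independent of $\varepsilon$ and of $f^{in}$.
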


\begin{proof}
We first observe that, as $\mu_1\in (m_0,\lambda)$ by \eqref{p1}, Lemma~\ref{Lemd3}, Lemma~\ref{Lemd5}, and H\"older's inequality imply that $\mathcal{M}_{\mu_1,\varepsilon}$ is finite. We next set 
\begin{equation*}
L_\varepsilon(s) := \frac{1}{q_1} \int_0^\infty y^{m_1} g_\varepsilon(s,y)^{q_1}\ \mathrm{d}y\ , \qquad s\ge 0\ , 
\end{equation*} 	
and infer from \eqref{d3} that
\begin{align}
\frac{\mathrm{d}}{\mathrm{d}s} L_\varepsilon(s) & = -(2q_1-m_1-1) L_\varepsilon(s) + \int_0^\infty x^{m_1} g_\varepsilon(s,x)^{q_1-1} \mathcal{C}g_\varepsilon(s,x)\ \mathrm{d}x \nonumber\\
& \qquad + \int_0^\infty x^{m_1} g_\varepsilon(s,x)^{q_1-1} \mathcal{F}_\varepsilon g_\varepsilon(s,x)\ \mathrm{d}x\ . \label{d12}
\end{align}
On the one hand, we use a monotonicity argument as in \cite{Buro83, LaurXX, LaMi02c, MiRR03} to estimate the contribution of the coagulation term. More precisely, thanks to the symmetry of $K$ and the subadditivity of $x\mapsto x^{m_1}$,
\begin{align*}
R_\varepsilon(s) & := \int_0^\infty x^{m_1} g_\varepsilon(s,x)^{q_1-1} \mathcal{C}g_\varepsilon(s,x)\ \mathrm{d}x \\
& = \frac{1}{2} \int_0^\infty \int_0^\infty (x+y)^{m_1} K(x,y) g_\varepsilon(s,x+y)^{q_1-1} g_\varepsilon(s,x) g_\varepsilon(s,y)\ \mathrm{d}y \mathrm{d}x \\
& \qquad - \int_0^\infty \int_0^\infty x^{m_1} K(x,y) g_\varepsilon(s,x)^{q_1} g_\varepsilon(s,y)\ \mathrm{d}y \mathrm{d}x \\
& \le \frac{1}{2} \int_0^\infty \int_0^\infty \left( x^{m_1} + y^{m_1} \right) K(x,y) g_\varepsilon(s,x+y)^{q_1-1} g_\varepsilon(s,x) g_\varepsilon(s,y)\ \mathrm{d}y \mathrm{d}x \\
& \qquad - \int_0^\infty \int_0^\infty x^{m_1} K(x,y) g_\varepsilon(s,x)^{q_1} g_\varepsilon(s,y)\ \mathrm{d}y \mathrm{d}x \\
& = \int_0^\infty \int_0^\infty x^{m_1} K(x,y) g_\varepsilon(s,x+y)^{q_1-1} g_\varepsilon(s,x) g_\varepsilon(s,y)\ \mathrm{d}y \mathrm{d}x \\
& \qquad - \int_0^\infty \int_0^\infty x^{m_1} K(x,y) g_\varepsilon(s,x)^{q_1} g_\varepsilon(s,y)\ \mathrm{d}y \mathrm{d}x\ .
\end{align*}
We now use Young's inequality to obtain
\begin{align*}
R_\varepsilon(s) & \le \int_0^\infty \int_0^\infty x^{m_1} K(x,y) \left[ \frac{q_1-1}{q_1} g_\varepsilon(s,x+y)^{q_1} + \frac{1}{q_1} g_\varepsilon(s,x)^{q_1} \right] g_\varepsilon(s,y)\ \mathrm{d}y \mathrm{d}x \\
& \qquad - \int_0^\infty \int_0^\infty x^{m_1} K(x,y) g_\varepsilon(s,x)^{q_1} g_\varepsilon(s,y)\ \mathrm{d}y \mathrm{d}x \\
& = \frac{q_1-1}{q_1} \int_0^\infty \int_0^\infty x^{m_1} K(x,y) g_\varepsilon(s,x+y)^{q_1} g_\varepsilon(s,y)\ \mathrm{d}y \mathrm{d}x \\
& \qquad - \frac{q_1-1}{q_1} \int_0^\infty \int_0^\infty x^{m_1} K(x,y) g_\varepsilon(s,x)^{q_1} g_\varepsilon(s,y)\ \mathrm{d}y \mathrm{d}x \\
& = \frac{q_1-1}{q_1} \int_0^\infty \int_y^\infty x^{m_1} K(x-y,y) g_\varepsilon(s,x)^{q_1} g_\varepsilon(s,y)\ \mathrm{d}x \mathrm{d}y \\
& \qquad - \frac{q_1-1}{q_1} \int_0^\infty \int_0^\infty x^{m_1} K(x,y) g_\varepsilon(s,x)^{q_1} g_\varepsilon(s,y)\ \mathrm{d}x \mathrm{d}y \ .
\end{align*}
Owing to the monotonicity of $x\mapsto x^{m_1} K(x,y)$ for all $y\in (0,\infty)$, the right hand side of the previous inequality is non-positive. Consequently,
\begin{equation}
R_\varepsilon(s) = \int_0^\infty x^{m_1} g_\varepsilon(s,x)^{q_1-1} \mathcal{C}g_\varepsilon(s,x)\ \mathrm{d}x \le 0\ . \label{d13}
\end{equation}
On the other hand, it follows from \eqref{aa2}, \eqref{be3}, and Fubini's theorem that 
\begin{align*}
S_\varepsilon(s) & := \int_0^\infty x^{m_1} g_\varepsilon(s,x)^{q_1-1} \int_x^\infty a(y) b_\varepsilon(x,y) g_\varepsilon(s,y)\ \mathrm{d}y \mathrm{d}x \\
& = a_0 \int_0^\infty y^{\lambda-2} g_\varepsilon(s,y) \int_0^{y} x^{m_1} B_\varepsilon\left( \frac{x}{y} \right) g_\varepsilon(s,x)^{q_1-1}\ \mathrm{d}x \mathrm{d}y \ .
\end{align*}
Since
\begin{align*}
& \int_0^{y} x^{m_1} B_\varepsilon\left( \frac{x}{y} \right) g_\varepsilon(s,x)^{q_1-1}\ \mathrm{d}x \\ 
& \qquad\qquad \le \left( \int_0^y x^{m_1} g_\varepsilon(s,x)^{q_1}\ \mathrm{d}x \right)^{(q_1-1)/q_1} \left( \int_0^y x^{m_1} B_\varepsilon\left( \frac{x}{y} \right)^{q_1}\ \mathrm{d}x \right)^{1/q_1} \\
& \qquad\qquad \le q_1^{(q_1-1)/q_1} \mathfrak{b}_{m_1,q_1,\varepsilon}^{1/q_1} L_\varepsilon(s)^{(q_1-1)/q_1} y^{(m_1+1)/q_1} \\
& \qquad\qquad \le q_1^{(q_1-1)/q_1} (1+\mathfrak{b}_{m_1,q_1})^{1/q_1} L_\varepsilon(s)^{(q_1-1)/q_1} y^{(m_1+1)/q_1}\ ,
\end{align*}
by \eqref{app4} and H\"older's inequality, we conclude that
\begin{align}
& \int_0^\infty x^{m_1} g_\varepsilon(s,x)^{q_1-1} \mathcal{F}_\varepsilon g_\varepsilon(s,x)\ \mathrm{d}x \le S_\varepsilon(s) \nonumber \\
& \qquad\qquad \le a_0 q_1^{(q_1-1)/q_1} (1+\mathfrak{b}_{m_1,q_1})^{1/q_1} M_{\mu_1}(g_\varepsilon(s)) L_\varepsilon(s)^{(q_1-1)/q_1}\ . \label{d14}
\end{align}
Collecting \eqref{d12}, \eqref{d13}, and \eqref{d14}, we end up with
\begin{align*}
\frac{\mathrm{d}}{\mathrm{d}s}L_\varepsilon(s) & \le -(2q_1-m_1-1) L_\varepsilon(s) + a_0 q_1^{(q_1-1)/q_1} (1+\mathfrak{b}_{m_1,q_1})^{1/q_1} \mathcal{M}_{\mu_1,\varepsilon} L_\varepsilon(s)^{(q_1-1)/q_1} \\
& = \frac{2q_1-m_1-1}{q_1^{1/q_1}} L_\varepsilon(s)^{(q_1-1)/q_1} \left[ \kappa_{\ref{kap8}}^{1/q_1} \mathcal{M}_{\mu_1,\varepsilon} - q_1^{1/q_1} L_\varepsilon(s)^{1/q_1} \right]
\end{align*}
with $\kappa_{\ref{kap8}} = (a_0 q_1)^{q_1} (1+\mathfrak{b}_{m_1,q_1}) / (2q_1-m_1-1)^{q_1}$. Lemma~\ref{Lemd5} follows from the above differential inequality by the comparison principle.
\end{proof}

\subsection{$W^{1,1}$-estimate}\label{sec3.3}

It turns out that the weighted $L^{q_1}$-estimate derived in Lemma~\ref{Lemd5}, though at the heart of the proof of Theorem~\ref{ThmP2}, is not sufficient to prove Proposition~\ref{PropApp}, and the final estimate needed for the proof of Proposition~\ref{PropApp} is the following $W^{1,1}$-estimate which depends strongly on $\varepsilon\in (0,\varepsilon_\varrho)$.

\begin{lemma}\label{Lemd6}
\refstepcounter{Num2Const}\label{kap9}
Consider $\varepsilon\in (0,\varepsilon_\varrho)$ and $f^{in}\in X_0^+\cap X_{1+\lambda}$ satisfying \eqref{d11} and let $g_\varepsilon=\Psi_\varepsilon(\cdot;f^{in})$ be given by \eqref{d2}. Assume also that $f^{in}\in X_{\lambda-2}\cap W^{1,1}(0,\infty)$. Then there is $\kappa_{\ref{kap9}}(\varepsilon)>0$ depending on $\varepsilon$ such that
\begin{equation*}
\|\partial_x g_\varepsilon(s)\|_1 \le \max\left\{ \|\partial_x f^{in}\|_1 , \kappa_{\ref{kap9}}(\varepsilon) \mathcal{M}_{\lambda-2,\varepsilon} \right\}\ , \qquad s\ge 0\ ,
\end{equation*}
where
\begin{equation*}
\mathcal{M}_{\lambda-2,\varepsilon} := \sup_{s\ge 0}\{ M_{\lambda-2}(g_\varepsilon(s))\}\ .
\end{equation*}
\end{lemma}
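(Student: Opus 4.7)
The plan is to derive a Gronwall-type differential inequality for $\|\partial_x g_\varepsilon(s)\|_1$ in the spirit of the moment estimates in Lemmas~\ref{Lemd1}--\ref{Lemd5}. Setting $h_\varepsilon := \partial_x g_\varepsilon$ and formally differentiating \eqref{d3a} in $x$ gives
\[
\partial_s h_\varepsilon + x \partial_x h_\varepsilon + 3 h_\varepsilon = \partial_x \mathcal{C} g_\varepsilon + \partial_x \mathcal{F}_\varepsilon g_\varepsilon.
\]
Multiplying by $\mathrm{sgn}(h_\varepsilon)$ (justified by a standard regularization of the sign function), integrating over $(0,\infty)$, and using the integration by parts identity $\int_0^\infty x\, \partial_x|h_\varepsilon|\, \mathrm{d}x = -\|h_\varepsilon\|_1$ (whose boundary terms vanish since $f^{in}\in W^{1,1}(0,\infty)\cap X_{\lambda-2}$ ensures sufficient decay, propagated by the equation) yields
\[
\frac{\mathrm{d}}{\mathrm{d}s}\|h_\varepsilon(s)\|_1 + 2\|h_\varepsilon(s)\|_1 \le \int_0^\infty |\partial_x \mathcal{C}g_\varepsilon(s,x)|\,\mathrm{d}x + \int_0^\infty |\partial_x \mathcal{F}_\varepsilon g_\varepsilon(s,x)|\,\mathrm{d}x.
\]

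For the fragmentation term, I would rewrite $\mathcal{F}_\varepsilon g_\varepsilon(x) = -a_0 x^{\lambda-1} g_\varepsilon(x) + a_0 \int_x^\infty y^{\lambda-3}\, B_\varepsilon(x/y) y g_\varepsilon(y)\,\mathrm{d}y$ and differentiate, the chain rule producing
\[
\partial_x \mathcal{F}_\varepsilon g_\varepsilon(x) = -a_0(\lambda-1)x^{\lambda-2} g_\varepsilon(x) - a_0 x^{\lambda-1} h_\varepsilon(x) - a_0 x^{\lambda-2} B_\varepsilon(1) g_\varepsilon(x) + a_0 \int_x^\infty y^{\lambda-3} B_\varepsilon'(x/y) g_\varepsilon(y)\,\mathrm{d}y.
\]
Taking $L^1$-norms: the first term contributes $a_0(\lambda-1) M_{\lambda-2}(g_\varepsilon(s))$, which is precisely the source of the $\mathcal{M}_{\lambda-2,\varepsilon}$ on the right-hand side of Lemma~\ref{Lemd6}. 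The second term contributes $a_0 \int x^{\lambda-1} |h_\varepsilon|\,\mathrm{d}x$, which is controlled using the high-moment bound from Lemma~\ref{Lemd3} on $g_\varepsilon$ together with Hölder's inequality applied to $|h_\varepsilon|$ in terms of $\|h_\varepsilon\|_1$ and a suitable weighted norm estimated directly from the $L^\infty$-bound on $g_\varepsilon$ (which itself follows from $\|h_\varepsilon\|_1$ plus $M_0(g_\varepsilon)<\infty$). The third term is bounded via Lemma~\ref{Lemd3} and the crude upper bound on $B_\varepsilon(1)$. For the last term, Fubini's theorem and the substitution $z=x/y$ yield
\[
\int_0^\infty \int_x^\infty y^{\lambda-3} |B_\varepsilon'(x/y)| g_\varepsilon(y)\,\mathrm{d}y\,\mathrm{d}x = \int_0^\infty y^{\lambda-2} g_\varepsilon(y) \int_0^1 |B_\varepsilon'(z)|\,\mathrm{d}z\, \mathrm{d}y \le \frac{\|\zeta'\|_1}{\varepsilon^3 \beta_\varepsilon}\, M_{\lambda-2}(g_\varepsilon(s)),
\]
using \eqref{app6}; this is where the $\varepsilon$-dependence of $\kappa_{\ref{kap9}}(\varepsilon)$ is entirely concentrated.

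For the coagulation term, I would split $\mathcal{C}=\mathcal{C}_+-\mathcal{C}_-$ and exploit the homogeneity of $K$: writing
\[
\mathcal{C}_+ g(x) = \frac{x^{\lambda+1}}{2}\int_0^1 K(z,1-z)\, g(xz)g(x(1-z))\,\mathrm{d}z,
\]
differentiating via the product rule, undoing the scaling, and bounding the resulting double integrals using $K(u,v)/(u+v) \le K_0(u^\alpha v^{\lambda-\alpha-1}+u^{\lambda-\alpha} v^{\alpha-1})$ and the moment bounds provided by Corollary~\ref{Cord2}, Lemma~\ref{Lemd3}, and Lemma~\ref{Lemd100} (the latter giving uniform control of $M_{\alpha-1}(g_\varepsilon)$ and $M_{\lambda-\alpha-1}(g_\varepsilon)$ since $\alpha-1,\lambda-\alpha-1 > -1$ by \eqref{aa1} and \eqref{aa8}). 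The contribution of $\mathcal{C}_-g$ is handled analogously. After an application of Young's inequality to absorb cross-terms, the net contribution is of the form $C_\varepsilon \|h_\varepsilon\|_1 + C_\varepsilon'$ with $\varepsilon$-dependent constants multiplied by already-bounded moments of $g_\varepsilon$.

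Putting everything together yields a differential inequality of the form
\[
\frac{\mathrm{d}}{\mathrm{d}s}\|h_\varepsilon(s)\|_1 \le -c\, \|h_\varepsilon(s)\|_1 + C(\varepsilon)\, \mathcal{M}_{\lambda-2,\varepsilon},
\]
from which the stated bound follows by the comparison principle with $\kappa_{\ref{kap9}}(\varepsilon) := C(\varepsilon)/c$. The main obstacle is the coagulation part: the derivative $\partial_x \mathcal{C}_+ g_\varepsilon$ is naturally controlled only by \emph{weighted} $L^1$-norms of $h_\varepsilon$, so the key technical work is to recast these weighted norms as $\|h_\varepsilon\|_1$ multiplied by $L^\infty$-bounds (obtained self-consistently from the very estimate being derived, together with the uniform control on $M_0(g_\varepsilon)$) and moments of $g_\varepsilon$ already controlled in the earlier lemmas.
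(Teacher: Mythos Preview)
Your overall strategy is on target, but there is a genuine gap in the way you pass from the differentiated equation to the differential inequality. After multiplying by $\mathrm{sgn}(h_\varepsilon)$ you bound the right-hand side by $\int_0^\infty|\partial_x\mathcal{C}g_\varepsilon|\,\mathrm{d}x+\int_0^\infty|\partial_x\mathcal{F}_\varepsilon g_\varepsilon|\,\mathrm{d}x$, and only then decompose. This throws away the signs of the two terms that are linear in $h_\varepsilon$, namely $-a_0 x^{\lambda-1} h_\varepsilon(x)$ from the fragmentation loss and $-\bigl(\int_0^\infty K(x,y)g_\varepsilon(y)\,\mathrm{d}y\bigr)h_\varepsilon(x)$ from the coagulation loss. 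Once the sign is gone, you are left with $a_0\int_0^\infty x^{\lambda-1}|h_\varepsilon|\,\mathrm{d}x$ and $\int_0^\infty\!\int_0^\infty K(x,y)g_\varepsilon(y)|h_\varepsilon(x)|\,\mathrm{d}y\,\mathrm{d}x$, and neither of these can be controlled by $\|h_\varepsilon\|_1$ plus bounded moments of $g_\varepsilon$: the weight $x^{\lambda-1}$ (resp.\ $K(x,y)$) is unbounded for large $x$, and you have no a priori weighted $L^1$-bound on $h_\varepsilon$. The workaround you sketch (H\"older together with an $L^\infty$-bound on $g_\varepsilon$ coming from $\|h_\varepsilon\|_1$) does not produce such a bound; it only controls $\|g_\varepsilon\|_\infty$, not $\int x^{\lambda-1}|\partial_x g_\varepsilon|\,\mathrm{d}x$.

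The paper's proof avoids this by \emph{not} taking absolute values of $\partial_x\mathcal{C}g_\varepsilon$ and $\partial_x\mathcal{F}_\varepsilon g_\varepsilon$ as blocks. Instead it multiplies the fully expanded equation for $G_\varepsilon=\partial_x g_\varepsilon$ by $\mathrm{sign}(G_\varepsilon)$ term by term. Then $-a(x)G_\varepsilon$ becomes $-a(x)|G_\varepsilon|\le 0$ and is simply dropped, while the coagulation loss term $-\int K(x,y)g_\varepsilon(y)\,\mathrm{d}y\,|G_\varepsilon(x)|$ is kept with its negative sign and used to absorb the gain contribution $\frac12\int_0^x K(y,x-y)g_\varepsilon(y)|G_\varepsilon(x-y)|\,\mathrm{d}y$ (after Fubini the latter equals $\frac12$ of the former in $L^1$). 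What remains is quadratic in $g_\varepsilon$ only (through $\partial_1 K$) and is then bounded by $M_\alpha M_{\lambda-\alpha-1}+M_{\alpha-1}M_{\lambda-\alpha}\le 2\varrho\,M_{\lambda-2}(g_\varepsilon)$ via H\"older between $M_1$ and $M_{\lambda-2}$. This cancellation, not any self-consistent $L^\infty$-argument, is the mechanism that closes the estimate; once you retain the signs your fragmentation computation and the $B_\varepsilon'$ bound via \eqref{app6} are exactly right.
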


\begin{proof}
We first note that $\mathcal{M}_{\lambda-2,\varepsilon}$ is finite according to Lemma~\ref{Lemd100}, as $\lambda-2\in (-1,0)$ by \eqref{aa1}. Introducing $G_\varepsilon:=\partial_x g_\varepsilon$, $\Sigma_\varepsilon :=\mathrm{sign}(G_\varepsilon)$, and using that $K(x,0)=0$, it follows from \eqref{d3a} that $G_\varepsilon$ solves
\begin{align}
\partial_s G_\varepsilon(s,x) & = - x \partial_x G_\varepsilon(s,x) - \left( 3+a(x)+ \int_0^\infty K(x,y) g_\varepsilon(s,y)\ \mathrm{d}y \right) G_\varepsilon(s,x) \nonumber \\
& \qquad + \frac{1}{2} \int_0^x K(y,x-y) g_\varepsilon(s,y) G_\varepsilon(s,x-y)\ \mathrm{d}y \nonumber \\
& \qquad + \frac{1}{2} \int_0^x \partial_1 K(y,x-y) g_\varepsilon(s,y) g_\varepsilon(s,x-y)\ \mathrm{d}y \nonumber \\
& \qquad - \left( \frac{\mathrm{d}a}{\mathrm{d}x}(x) + a(x) b_\varepsilon(x,x) + \int_0^\infty \partial_1 K(x,y) g_\varepsilon(s,y)\ \mathrm{d}y \right) g_\varepsilon(s,x) \nonumber \\
& \qquad + \int_x^\infty a(y) \partial_1 b_\varepsilon(x,y) g_\varepsilon(s,y)\ \mathrm{d}y \label{w1}
\end{align}	
for $(s,x)\in (0,\infty)^2$, where $\partial_1 K$ and $\partial_1 b_\varepsilon$ denote the partial derivatives with respect to the first variable of $K$ and $b_\varepsilon$, respectively. 

Let $s\ge 0$. We multiply \eqref{w1} by $\Sigma_\varepsilon$, integrate with respect to $x$ over $(0,\infty)$ and then infer from \eqref{aa2}, \eqref{be3}, and Fubini's theorem that
\begin{align*}
\frac{\mathrm{d}}{\mathrm{d}s} \|G_\varepsilon(s)\|_1 & \le - 2 \|G_\varepsilon(s)\|_1 - a_0 M_{\lambda-1}(|G_\varepsilon(s)|) \\
& \qquad - \int_0^\infty \int_0^\infty K(x,y) g_\varepsilon(s,y) |G_\varepsilon(s,x)|\ \mathrm{d}y\mathrm{d}x \\
& \qquad + \frac{1}{2} \int_0^\infty \int_0^\infty K(x,y) g_\varepsilon(s,y) |G_\varepsilon(s,x)|\ \mathrm{d}y\mathrm{d}x \\
& \qquad + \frac{3}{2} \int_0^\infty \int_0^\infty |\partial_1 K(x,y)| g_\varepsilon(s,y) g_\varepsilon(s,x)\ \mathrm{d}y\mathrm{d}x \\
& \qquad + a_0 \left( \lambda-1+B_\varepsilon(1) + \int_0^1 \left| \frac{\mathrm{d}B_\varepsilon}{\mathrm{d}z}(z) \right|\ \mathrm{d}z \right) M_{\lambda-2}(g_\varepsilon(s)) \ .
\end{align*}
Setting 
\begin{equation*}
\bar{B}_\varepsilon := 1 + B_\varepsilon(1) + \int_0^1 \left| \frac{\mathrm{d}B_\varepsilon}{\mathrm{d}z}(z) \right|\ \mathrm{d}z\ ,
\end{equation*}
which is finite according to \eqref{app5}, and observing that
\begin{equation*}
0 \le \partial_1 K(x,y) \le K_0 \left[ x^{\alpha-1} y^{\lambda-\alpha} + x^{\alpha} y^{\lambda-\alpha-1} \right]\ , \qquad (x,y)\in (0,\infty)^2\ ,
\end{equation*} 
due to \eqref{aa1} and \eqref{aa3}, we end up with
\begin{align*}
\frac{\mathrm{d}}{\mathrm{d}s} \|G_\varepsilon(s)\|_1 & \le - 2 \|G_\varepsilon(s)\|_1 + a_0 \bar{B}_\varepsilon M_{\lambda-2}(g_\varepsilon(s)) \\
& \qquad + \frac{3}{2}\int_0^\infty \int_0^\infty |\partial_1 K(x,y)| g_\varepsilon(s,y) g_\varepsilon(s,x)\ \mathrm{d}y\mathrm{d}x \\
& \le - 2 \|G_\varepsilon(s)\|_1 + a_0 \bar{B}_\varepsilon M_{\lambda-2}(g_\varepsilon(s)) \\
& \qquad + \frac{3K_0}{2} \left[ M_\alpha(g_\varepsilon(s)) M_{\lambda-\alpha-1}(g_\varepsilon(s)) + M_{\alpha-1}(g_\varepsilon(s)) M_{\lambda-\alpha}(g_\varepsilon(s)) \right]\ .
\end{align*}
We next infer from \eqref{aa1} and H\"older's inequality that
\begin{align*}
M_\alpha(g_\varepsilon(s)) & \le M_1(g_\varepsilon(s))^{(\alpha+2-\lambda)/(3-\lambda)} M_{\lambda-2}(g_\varepsilon(s))^{(1-\alpha)/(3-\lambda)} \ , \\
M_{\lambda-\alpha-1}(g_\varepsilon(s)) & \le M_1(g_\varepsilon(s))^{(1-\alpha)/(3-\lambda)} M_{\lambda-2}(g_\varepsilon(s))^{(\alpha+2-\lambda)/(3-\lambda)} \ , \\
M_{\alpha-1}(g_\varepsilon(s)) & \le M_1(g_\varepsilon(s))^{(\alpha+1-\lambda)/(3-\lambda)} M_{\lambda-2}(g_\varepsilon(s))^{(2-\alpha)/(3-\lambda)} \ , \\
M_{\lambda-\alpha}(g_\varepsilon(s)) & \le M_1(g_\varepsilon(s))^{(2-\alpha)/(3-\lambda)} M_{\lambda-2}(g_\varepsilon(s))^{(\alpha+1-\alpha)/(3-\lambda)} \ ,
\end{align*}
so that, by \eqref{d9} and \eqref{d11},
\begin{equation*}
M_\alpha(g_\varepsilon(s)) M_{\lambda-\alpha-1}(g_\varepsilon(s)) + M_{\alpha-1}(g_\varepsilon(s)) M_{\lambda-\alpha}(g_\varepsilon(s)) \le 2\varrho M_{\lambda-2}(g_\varepsilon(s))\ .
\end{equation*}
Collecting the above inequalities and using \eqref{app6}, we conclude that
\begin{equation*}
\frac{\mathrm{d}}{\mathrm{d}s} \|G_\varepsilon(s)\|_1 + 2 \|G_\varepsilon(s)\|_1 \le 2\kappa_{\ref{kap9}}(\varepsilon) \mathcal{M}_{\lambda-2,\varepsilon}\ ,
\end{equation*}
with $\kappa_{\ref{kap9}}(\varepsilon) := \left( a_0 \bar{B}_\varepsilon + 3\varrho K_0 \right)/2$. Integrating the previous differential inequality gives Lemma~\ref{Lemd6}.
\end{proof}

\subsection{Invariant Set}\label{sec3.4}

The analysis performed in the previous three sections now allows us to construct a compact and convex subset of $X_1$ which is left invariant by \eqref{d3}. Let us first recall that, owing to \eqref{p1}, the parameter $\mu_1$ (defined in Lemma~\ref{Lemd5}) satisfies
\begin{equation}
1+\lambda >\mu_1 = \frac{m_1+1+q_1(\lambda-2)}{q_1}>m_0>-\nu-1\ . \label{d16}
\end{equation}

For $\varepsilon\in (0,\varepsilon_\varrho)$, we define the subset $\mathcal{Z}_\varepsilon$ of $X_1^+$ as follows: $h\in\mathcal{Z}_\varepsilon$ if and only if $h$ satisfies the following conditions: 
\begin{subequations}\label{d15}
\begin{align}
& h\in X_1^+ \cap \bigcap_{m\ge \lambda-2} X_m \cap W^{1,1}(0,\infty)\ , \qquad M_1(h) = \varrho\ , \label{d15a}\\
& \int_0^\infty x \ln{(x)}\; h(x)\ \mathrm{d}x + \frac{3}{e(1-m_1)} M_{m_1}(h) \le \kappa_{\ref{kap1}}(m_1)\ , \label{d15b}\\
& M_m(h) \le \kappa_{\ref{kap4}}(m)\ , \qquad m\ge 1+\lambda\ , \label{d15c}\\
& M_{m_0}(h)\le \kappa_{\ref{kap7}}(m_0) \kappa_{\ref{kap4}}(1+\lambda)\ , \label{d15d} \\
& M_{\mu_1}(h) \le \kappa_{\ref{kap7}}(m_0)^{(1+\lambda-\mu_1)/(1+\lambda-m_0)} \kappa_{\ref{kap4}}(1+\lambda)\ , \label{d15e} \\
& \int_0^\infty x^{m_1} h(x)^{q_1}\ \mathrm{d}x \le \kappa_{\ref{kap8}} \kappa_{\ref{kap7}}(m_0)^{q_1(1+\lambda-\mu_1)/(1+\lambda-m_0)} \kappa_{\ref{kap4}}(1+\lambda)^{q_1}\ , \label{d15f} \\
& M_{\lambda-2}(h) \le \kappa_{\ref{kap100}}(\lambda-2,\varepsilon) \kappa_{\ref{kap4}}(1+\lambda)\ , \label{d15g} \\
& \|\partial_x h\|_1 \le \kappa_{\ref{kap9}}(\varepsilon) \kappa_{\ref{kap100}}(\lambda-2,\varepsilon) \kappa_{\ref{kap4}}(1+\lambda)\ . \label{d15h}
\end{align}
\end{subequations}
Note that we may assume that $E_\varrho: x\mapsto \varrho e^{-x}$ belongs to $\mathcal{Z}_\varepsilon$, after possibly taking larger constants in \eqref{d15} without changing their dependence with respect to the involved parameters. In particular, $\mathcal{Z}_\varepsilon$ is non-empty.

As we shall see now, the outcome of the analysis performed in the previous sections provides the invariance of $\mathcal{Z}_\varepsilon$ for the dynamics of \eqref{d3} when $\varepsilon\in (0,\varepsilon_\varrho)$. 

\begin{lemma}\label{Lemd7}
Consider $\varepsilon\in (0,\varepsilon_\varrho)$ and $f^{in}\in \mathcal{Z}_\varepsilon$. Then $\Psi_\varepsilon(s;f^{in})\in \mathcal{Z}_\varepsilon$ for all $s\ge 0$. Furthermore, $\mathcal{Z}_\varepsilon$ is a non-empty, convex, and compact subset of $X_1$.
\end{lemma}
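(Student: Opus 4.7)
\emph{Invariance.} For $f^{in}\in\mathcal{Z}_\varepsilon$ and $s\ge 0$, I would verify each inequality defining $\mathcal{Z}_\varepsilon$ for $g_\varepsilon(s)=\Psi_\varepsilon(s;f^{in})$ by directly quoting the appropriate lemma from Sections~\ref{sec3.1}--\ref{sec3.3}: \eqref{d15b} follows from Lemma~\ref{Lemd1} with $m=m_1$; \eqref{d15c} from Lemma~\ref{Lemd3} applied at each $m\ge 1+\lambda$; \eqref{d15d} from Lemma~\ref{Lemd4} with $m=m_0$, using the uniform-in-time control $\mathcal{M}_{1+\lambda,\varepsilon}\le\kappa_{\ref{kap4}}(1+\lambda)$ guaranteed by the previous step; \eqref{d15g} from Lemma~\ref{Lemd100} with $m=\lambda-2$; \eqref{d15f} from Lemma~\ref{Lemd5}; and \eqref{d15h} from Lemma~\ref{Lemd6}. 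In each case the constant matches the ``fallback'' branch of the maximum in the corresponding lemma, which is precisely how $\mathcal{Z}_\varepsilon$ was designed. The intermediate bound \eqref{d15e} is obtained by H\"older interpolation, valid since $m_0<\mu_1<1+\lambda$ by \eqref{d16}:
\[
M_{\mu_1}(h)\le M_{m_0}(h)^{(1+\lambda-\mu_1)/(1+\lambda-m_0)}\,M_{1+\lambda}(h)^{(\mu_1-m_0)/(1+\lambda-m_0)},
\]
and inserting \eqref{d15c} at $m=1+\lambda$ together with \eqref{d15d} yields exactly the exponents appearing in \eqref{d15e}. Mass conservation is \eqref{d9}.

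\emph{Non-emptiness and convexity.} Non-emptiness is the inclusion $E_\varrho\in\mathcal{Z}_\varepsilon$ recorded just after \eqref{d15}. For convexity, $h\ge 0$ and $M_1(h)=\varrho$ cut out a convex subset of $X_1$; each of \eqref{d15b}--\eqref{d15e} and \eqref{d15g} is an affine inequality in $h$; \eqref{d15h} is the sublevel set of a norm; and \eqref{d15f} is the sublevel set of the convex functional $h\mapsto\int_0^\infty x^{m_1}|h|^{q_1}\,\mathrm{d}x$, convex because $q_1>1$.

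\emph{Compactness in $X_1$.} Given a sequence $(h_n)\subset\mathcal{Z}_\varepsilon$, Chebyshev's inequality together with \eqref{d15d} and \eqref{d15c} at $m=1+\lambda$ yields
\[
\int_0^r x h_n\,\mathrm{d}x\le r^{1-m_0}M_{m_0}(h_n),\qquad \int_R^\infty x h_n\,\mathrm{d}x\le R^{-\lambda}M_{1+\lambda}(h_n),
\]
which gives uniform-in-$n$ tightness of $\{xh_n\}$ in $L^1(0,\infty)$ near both $0$ and $\infty$, since $m_0<1$ and $\lambda>0$. On every compact $[r,R]\subset(0,\infty)$ the $W^{1,1}$-bound \eqref{d15h} provides equicontinuity of translates in $L^1([r,R])$, so Fr\'echet-Kolmogorov yields relative compactness there. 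A diagonal extraction produces a subsequence converging almost everywhere and in $L^1_{\mathrm{loc}}((0,\infty))$ to some $h\ge 0$; combined with the endpoint tightness, this gives $h_n\to h$ strongly in $X_1$. The mass identity $M_1(h)=\varrho$ is preserved by $X_1$-convergence, and every remaining inequality in \eqref{d15} passes to the limit by Fatou's lemma on the nonnegative integrands; the only signed quantity, $\int x\ln x\,h_n\,\mathrm{d}x$ in \eqref{d15b}, is split at $x=1$ and the contribution on $(0,1)$ handled through $|x\ln x|\le C x^{m_0}$ and Vitali's theorem, with Fatou applied on $[1,\infty)$.

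\emph{Expected main obstacle.} The delicate point is preserving the $W^{1,1}$-bound \eqref{d15h} under strong $X_1$-convergence: a priori only a total-variation bound is inherited by $h$. I would handle this either by extracting weak $L^1$-convergence of $(\partial_x h_n)$ through an auxiliary uniform-integrability estimate that can be read off the proof of Lemma~\ref{Lemd6}, or by enlarging $\mathcal{Z}_\varepsilon$ to the corresponding $BV$-sublevel set (still convex, $X_1$-compact, and invariant under the flow, since Proposition~\ref{PropP1} yields $W^{1,1}$-regular solutions in the first place).
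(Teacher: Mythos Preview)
Your invariance argument is exactly the paper's: chain Lemmas~\ref{Lemd1}, \ref{Lemd3}, \ref{Lemd4}, \ref{Lemd100}, \ref{Lemd5}, and~\ref{Lemd6} in that order, with \eqref{d15e} filled in by H\"older interpolation between $M_{m_0}$ and $M_{1+\lambda}$. Your compactness argument (moment tightness at both ends, local compactness from the $W^{1,1}$-bound, Vitali) is also the paper's, only spelled out more carefully; the paper uses $X_{\lambda-2}$ rather than $X_{m_0}$ for the small-size tail, which is immaterial.

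The obstacle you flag is real, and the paper's one-sentence compactness argument does not address it either: the sublevel set in \eqref{d15h} is not closed in $X_1$, since a $W^{1,1}$-bounded sequence can converge in $L^1_{\mathrm{loc}}$ to a properly $BV$ limit. Your first proposed fix (uniform integrability of $\partial_x h_n$ read off Lemma~\ref{Lemd6}) is not available as stated---that proof supplies only an $L^1$-bound. Your second fix, replacing $\|\partial_x h\|_1$ by the total variation $|Dh|(0,\infty)$ in \eqref{d15a} and \eqref{d15h}, is the right one: the enlarged set is $X_1$-closed by lower semicontinuity of the total variation, still convex, and the differential inequality in the proof of Lemma~\ref{Lemd6} carries over to $|Dg_\varepsilon(s)|(0,\infty)$ (as does its use in Lemma~\ref{Lemd8}(b)). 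None of this touches the downstream argument, since \eqref{d15g}--\eqref{d15h} are $\varepsilon$-dependent devices discarded in Section~\ref{sec4}.
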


\begin{proof}
Let $f^{in}\in \mathcal{Z}_\varepsilon$. Setting $g_\varepsilon=\Psi_\varepsilon(\cdot;f^{in})$, see \eqref{d2}, it satisfies \eqref{d9} by Lemma~\ref{Lemd1}, from which we readily obtain that $g_\varepsilon(s)\in X_1^+$ and $M_1(g_\varepsilon(s))=\varrho$ for all $s\ge 0$. 

Next, let $s\ge 0$. We infer from \eqref{d15b} and Lemma~\ref{Lemd1} (with $m=m_1$) that $g_\varepsilon(s)$ satisfies \eqref{d15b}. Also, since $f^{in}$ satisfies \eqref{d11} according to \eqref{d15b}, we are in a position to apply Lemma~\ref{Lemd3} for $m\ge 1+\lambda>1+\lambda-\alpha$ and deduce from \eqref{d15c} for $f^{in}$ that \eqref{d15c} is satisfied by $g_\varepsilon(s)$ for any $m\ge 1+\lambda$. This property (with $m=1+\lambda$) along with Lemma~\ref{Lemd4} (with $m=m_0$) guarantees that $g_\varepsilon(s)$ satisfies \eqref{d15d}. We further use \eqref{d15c} (with $m=1+\lambda$) and \eqref{d15d} that we just established for $g_\varepsilon$ together with \eqref{d16} and H\"older's inequality to obtain
\begin{align*}
M_{\mu_1}(g_\varepsilon(s)) & \le M_{1+\lambda}(g_\varepsilon(s))^{(\mu_1-m_0)/(1+\lambda-m_0)} M_{m_0}(g_\varepsilon(s))^{(1+\lambda-\mu_1)/(1+\lambda-m_0)} \\
& \le \kappa_{\ref{kap4}}(1+\lambda)^{(\mu_1-m_0)/(1+\lambda-m_0)} \left[ \kappa_{\ref{kap4}}(1+\lambda) \kappa_{\ref{kap7}}(m_0) \right]^{(1+\lambda-\mu_1)/(1+\lambda-m_0)} \\
& \le \kappa_{\ref{kap7}}(m_0)^{(1+\lambda-\mu_1)/(1+\lambda-m_0)} \kappa_{\ref{kap4}}(1+\lambda)\ .
\end{align*}
Hence, $g_\varepsilon(s)$ satisfies \eqref{d15e} for $s\ge 0$. We now combine the just established property \eqref{d15e} for $g_\varepsilon$ with Lemma~\ref{Lemd6} and realize that $g_\varepsilon(s)$ satisfies \eqref{d15f} for $s\ge 0$. Finally, since $f^{in}$ satisfies \eqref{d15g} and \eqref{d15h}, it follows at once from the already proved property \eqref{d15c} for $g_\varepsilon$ (for $m=1+\lambda$), Lemma~\ref{Lemd100}, and Lemma~\ref{Lemd6} that $g_\varepsilon(s)$ also satisfies \eqref{d15g} and \eqref{d15h}. Summarizing, we have shown that $g_\varepsilon(s)\in\mathcal{Z}_\varepsilon$ for all $s\ge 0$.

Next, the set $\mathcal{Z}_\varepsilon$ is convex and its compactness in $X_1$ follows from its boundedness in $X_{\lambda-2}\cap X_{1+\lambda}$, the compactness of the embedding of $W^{1,1}(1/R,R)$ in $L^1(1/R,R)$, which holds true for all $R>1$, and Vitali's theorem \cite[Theorem~2.24]{FoLe07}.
\end{proof}

To complete the proof of Proposition~\ref{PropApp}, the missing tile is the continuity of weak solutions to \eqref{d3} with respect to the initial condition which we establish now.

\begin{lemma}\label{Lemd8}
Let $\varepsilon\in (0,\varepsilon_\varrho)$.
\begin{itemize}
	\item[(a)] For $s\ge 0$, the map $f^{in}\longmapsto\Psi_\varepsilon(s;f^{in})$, defined in \eqref{d2}, is continuous from $\mathcal{Z}_\varepsilon$ endowed with the norm topology of $X_1$ to itself. 
	\item[(b)] For $f^{in}\in \mathcal{Z}_\varepsilon$, the map $s\longmapsto \Psi_\varepsilon(s;f^{in})$ belongs to $C([0,\infty),X_1)$.
\end{itemize} 
In other words, $\Psi_\varepsilon: [0,\infty)\times \mathcal{Z}_\varepsilon \longrightarrow \mathcal{Z}_\varepsilon$ is a dynamical system for the norm topology of $X_1$.
\end{lemma}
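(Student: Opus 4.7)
The plan is to combine the $X_1$-compactness and invariance of $\mathcal{Z}_\varepsilon$ from Lemma~\ref{Lemd7} with the uniqueness statement of Proposition~\ref{PropP1}, via a classical compactness-plus-uniqueness scheme.

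Part~(b) is the quicker piece. For $f^{in}\in \mathcal{Z}_\varepsilon$, Proposition~\ref{PropP1} yields $\Psi_\varepsilon(\cdot;f^{in})\in C([0,T],X_{m_1,w})$ while Lemma~\ref{Lemd7} places $\Psi_\varepsilon(s;f^{in})$ in the $X_1$-compact set $\mathcal{Z}_\varepsilon$ for every $s\ge 0$. Given $s_n\to s$, the sequence $(\Psi_\varepsilon(s_n;f^{in}))_{n\ge 1}$ is therefore $X_1$-relatively compact, and any $X_1$-cluster point must coincide with the weak $X_{m_1}$-limit $\Psi_\varepsilon(s;f^{in})$, so the full sequence converges in $X_1$. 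Weak continuity in time is thereby upgraded to strong $X_1$ continuity.

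For part~(a), I take $(f_n)\subset \mathcal{Z}_\varepsilon$ with $f_n\to f^{in}$ in $X_1$, observe that $f^{in}\in \mathcal{Z}_\varepsilon$ by closedness, set $g_n:=\Psi_\varepsilon(\cdot;f_n)$ and $g:=\Psi_\varepsilon(\cdot;f^{in})$, and aim to show that $g_n\to g$ in $C([0,T],X_1)$ for every $T>0$ by an Arzel\`a-Ascoli argument. Pointwise $X_1$-relative compactness is immediate since $g_n(s)\in \mathcal{Z}_\varepsilon$. For equicontinuity, the weak formulation \eqref{d3b} shows that, for any fixed $\vartheta\in \Theta_{m_1}\cap W^{1,\infty}(0,\infty)$, the derivative of $s\mapsto \int_0^\infty g_n(s,x)\vartheta(x)\,\mathrm{d}x$ is bounded by a constant depending only on $\vartheta$ and on the bounds defining $\mathcal{Z}_\varepsilon$, so $(g_n)$ is equicontinuous in $C([0,T],X_{m_1,w})$. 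A standard contradiction argument combining weak equicontinuity with strong $X_1$-compactness of $\mathcal{Z}_\varepsilon$ then upgrades this to equicontinuity in $C([0,T],X_1)$: two sequences of times $(s_k,s_k')$ with $|s_k-s_k'|\to 0$ and $\|g_{n_k}(s_k)-g_{n_k}(s_k')\|_1\ge \delta$ would, after extraction, produce two distinct $X_1$-limits separated by some test function, contradicting the weak equicontinuity.

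Arzel\`a-Ascoli then gives relative compactness of $(g_n)$ in $C([0,T],X_1)$, and any subsequential limit $\tilde g$ inherits the weak formulation \eqref{d3b} with initial datum $f^{in}$: the linear terms converge immediately, while the bilinear coagulation integral and the fragmentation integral are handled by combining the strong $X_1$-convergence on compact subsets of $(0,\infty)$ with uniform integrability, for which the moment bounds $M_m(g_n(s))\le \kappa_{\ref{kap4}}(m)$ with $m\ge 1+\lambda$ from Lemma~\ref{Lemd3} control the large-size tail, and the weighted $L^{q_1}(x^{m_1}\mathrm{d}x)$ bound \eqref{d15f} together with the compatibility \eqref{aa8} and the sublinear moment bound from Lemma~\ref{Lemd4} control the near-zero singularity of $K$. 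Uniqueness in Proposition~\ref{PropP1} forces $\tilde g=g$, and since every subsequence has a sub-subsequence converging to $g$, the full sequence converges. The step I expect to be the most delicate is precisely this passage to the limit in the coagulation bilinear form, where concentration must be ruled out at both ends of $(0,\infty)$; the set $\mathcal{Z}_\varepsilon$ has been engineered with exactly the $L^{q_1}$, high-order moment, and $W^{1,1}$ controls needed to secure the requisite uniform integrability.
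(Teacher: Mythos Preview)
Your proof is correct and takes a genuinely different route from the paper. The paper argues directly and quantitatively: for part~(a) it derives a Gr\"onwall-type contraction estimate in the weighted space $L^1((0,\infty),(x^\alpha+x^\lambda)\mathrm{d}x)$ by replaying the stability computation behind the uniqueness in \cite{LaurXX}, and then converts this into an explicit $X_1$-modulus of continuity via the moment bounds available on $\mathcal{Z}_\varepsilon$; for part~(b) it bounds $\int_0^\infty (1+x)^{-1}|\partial_s g_\varepsilon(s,x)|\,\mathrm{d}x$ uniformly in $s$ (here the $W^{1,1}$ bound \eqref{d15h} is used explicitly to control the transport term), yielding an explicit H\"older estimate in time. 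Your approach is softer: you leverage the $X_1$-compactness of $\mathcal{Z}_\varepsilon$ from Lemma~\ref{Lemd7} to upgrade the weak $X_{m_1}$-continuity supplied by Proposition~\ref{PropP1} to strong $X_1$-continuity in~(b), and for~(a) you run a compactness-plus-uniqueness scheme via Arzel\`a--Ascoli, passing to the limit in \eqref{d3b} and invoking the uniqueness clause of Proposition~\ref{PropP1}. What the paper gains is explicit rates and the avoidance of a limit passage in the weak formulation; what you gain is a conceptually cleaner argument that recycles Lemma~\ref{Lemd7} and Proposition~\ref{PropP1} instead of reproducing the stability estimate. Both routes ultimately rest on the same uniqueness machinery---the paper exploits its quantitative form, you its qualitative one.
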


\begin{proof}[Proof of Lemma~\ref{Lemd8}~(a)]
Consider $(f_1^{in},f_2^{in})\in \mathcal{Z}_\varepsilon^2$ and put $g_{i,\varepsilon} := \Psi_\varepsilon(\cdot;f_i^{in})$, $i=1,2$.  Arguing as in the proof of \cite[Theorem~1.2~(c)]{LaurXX}, it follows from \eqref{d3} that, for $s\ge 0$,
\begin{align*}
& \frac{\mathrm{d}}{\mathrm{d}s} \int_0^\infty W(x) |g_{1,\varepsilon}(s,x)-g_{2,\varepsilon}(s,x)|\ \mathrm{d}x \\
& \qquad \le \int_0^\infty \left[ x \frac{\mathrm{d}W}{\mathrm{d}x}(x) -W(x) \right] |g_{1,\varepsilon}(s,x)-g_{2,\varepsilon}(s,x)|\ \mathrm{d}x \\
& \qquad\quad + \left[ 9K_0 v_\varepsilon(s) + a_0\mathfrak{b}_{\alpha,1,\varepsilon} \right] \int_0^\infty W(x) |g_{1,\varepsilon}(s,x)-g_{2,\varepsilon}(s,x)|\ \mathrm{d}x\ , 
\end{align*}
where $W(x) = x^\alpha+x^\lambda$, $x\ge 0$, and
\begin{equation*}
v_\varepsilon(s) := M_\alpha(g_{1,\varepsilon}(s)) + M_\alpha(g_{2,\varepsilon}(s)) + M_{2\lambda-\alpha}(g_{1,\varepsilon}(s)) + M_{2\lambda-\alpha}(g_{2,\varepsilon}(s))\ .
\end{equation*}
Since both $f_1^{in}$ and $f_2^{in}$ belong to $\mathcal{Z}_\varepsilon$, so do $g_{1,\varepsilon}(s)$ and $g_{2,\varepsilon}(s)$ for all $s\ge 0$ by Lemma~\ref{Lemd7}. Consequently, as $m_0<\alpha<2\lambda-\alpha\le 1+\lambda$ by \eqref{aa1} and \eqref{aa11}, 
\begin{equation*}
V_\varepsilon := \sup_{s\ge 0}\{v_\varepsilon(s)\} < \infty\ .
\end{equation*} 
In addition,
\begin{equation*}
x W'(x) - W(x) = (\alpha-1) x^\alpha + (\lambda-1) x^\lambda \le x^\lambda \le W(x)\ , \qquad x\in (0,\infty)\ ,
\end{equation*}
by \eqref{aa1} and we infer from \eqref{app4a} and the previous differential inequality that, for $s\ge 0$, 
\refstepcounter{Num2Const}\label{kap10}
\begin{equation}
\int_0^\infty W(x) |g_{1,\varepsilon}(s,x)-g_{2,\varepsilon}(s,x)|\ \mathrm{d}x \le e^{\kappa_{\ref{kap10}}(\varepsilon)s} \int_0^\infty W(x) |f_1^{in}(x)-f_2^{in}(x)|\ \mathrm{d}x\ , \label{d17}
\end{equation}
with $\kappa_{\ref{kap10}}(\varepsilon) := 1 + 9K_0 V_\varepsilon + a_0 \mathfrak{b}_{\alpha,1,\varepsilon}$.

\refstepcounter{Num2Const}\label{kap11}
Now, $W(x)\ge x$ for $x\ge 0$ as $\alpha\le 1 < \lambda$, while, for $R>1$, it follows from \eqref{aa1} and \eqref{aa11} that
\begin{align*}
\int_0^\infty W(x) |f_1^{in}(x)-f_2^{in}(x)|\ \mathrm{d}y & \le \int_0^{1/R} W(x) [f_1^{in}(x)+f_2^{in}(x)]\ \mathrm{d}x \\
& \qquad + \int_{1/R}^R W(x) |f_1^{in}(x)-f_2^{in}(x)|\ \mathrm{d}x \\
& \qquad + \int_R^\infty W(x) [f_1^{in}(x)+f_2^{in}(x)]\ \mathrm{d}x \\
& \le \left( R^{m_0-\alpha} + R^{m_0-\lambda} \right) \left[ M_{m_0}(f_1^{in}) + M_{m_0}(f_2^{in}) \right] \\
& \qquad + \left( R^{1-\alpha} + R^{\lambda-1} \right) \int_{1/R}^R x |f_1^{in}(x)-f_2^{in}(x)|\ \mathrm{d}x \\
& \qquad + \left( R^{\alpha-1-\lambda} + R^{-1} \right) \left[ M_{1+\lambda}(f_1^{in}) + M_{1+\lambda}(f_2^{in}) \right] \\
& \le \kappa_{\ref{kap11}} \left[ R^{m_0-\alpha} + R^{-1} + R^{1-\alpha} \int_0^\infty x |f_1^{in}(x)-f_2^{in}(x)|\ \mathrm{d}x \right]\ ,
\end{align*}
the last inequality relying on the property $f_i^{in}\in \mathcal{Z}_\varepsilon$, $i=1,2$. Combining \eqref{d17} and the previous inequalities gives, for $s\ge 0$,
\begin{align*}
& \int_0^\infty x |g_{1,\varepsilon}(s,x)-g_{2,\varepsilon}(s,x)|\ \mathrm{d}x \\
& \qquad\qquad \le \kappa_{\ref{kap11}} e^{\kappa_{\ref{kap10}}(\varepsilon)s} \omega\left( \int_0^\infty x |f_1^{in}(x)-f_2^{in}(x)|\ \mathrm{d}x \right)\ ,
\end{align*}
with
\begin{equation*}
\omega(r) := \inf_{R>1}\left\{ R^{m_0-\alpha} + R^{-1} + R^{1-\alpha} r \right\}\ , \qquad r>0\ .
\end{equation*}
Since $\omega(r)\longrightarrow 0$ as $r\to 0$, the claimed continuity follows.
\end{proof}

\begin{proof}[Proof of Lemma~\ref{Lemd8}~(b)]
Set $g_\varepsilon=\Psi_\varepsilon(\cdot;f^{in})$. Let $s\ge 0$. We infer from \eqref{aa1}, \eqref{aa2}, \eqref{aa3}, \eqref{aa11}, \eqref{be3}, \eqref{app6}, \eqref{d3a}, \eqref{d9}, \eqref{d11},  and H\"older's inequality that
\begin{align*}
\int_0^\infty \frac{|\partial_s g_\varepsilon(s,x)|}{1+x}\ \mathrm{d}x & \le \|\partial_x g_\varepsilon(s)\|_1 + 2 M_0(g_\varepsilon(s)) + 3 K_0 M_\alpha(g_\varepsilon(s)) M_{\lambda-\alpha}(g_\varepsilon(s)) \\
& \qquad + a_0 \left( 1 + \mathfrak{b}_{0,1,\varepsilon} \right) M_{\lambda-1}(g_\varepsilon(s)) \\
& \le \|\partial_x g_\varepsilon(s)\|_1 + 2 \varrho^{(2-\lambda)/(3-\lambda)} M_{\lambda-2}(g_\varepsilon(s))^{1/(3-\lambda)} \\
& \qquad + 3 K_0 \varrho^{(\lambda-2m_0)/(1-m_0)} M_{m_0}(g_\varepsilon(s))^{(2-\lambda)/(1-m_0)} \\
& \qquad + a_0 \left( 1 + \mathfrak{b}_{0,1,\varepsilon} \right) \varrho^{1/(3-\lambda)} M_{\lambda-2}(g_\varepsilon(s))^{(2-\lambda)/(3-\lambda)} \ .
\end{align*}
\refstepcounter{Num2Const}\label{kap12}
Since $g_\varepsilon(s)\in \mathcal{Z}_\varepsilon$ by Lemma~\ref{Lemd7}, we further obtain
\begin{equation*}
\int_0^\infty \frac{|\partial_s g_\varepsilon(s,x)|}{1+x}\ \mathrm{d}x \le \kappa_{\ref{kap12}}(\varepsilon)\ , \qquad s\ge 0\ . 
\end{equation*}
Hence, for $s_2> s_1\ge 0$ and $R\ge 1$,
\begin{align*}
\int_0^\infty x |g_\varepsilon(s_2,x)-g_\varepsilon(s_1,x)|\ \mathrm{d}y & \le R(1+R) \int_0^R  \frac{|g_\varepsilon(s_2,x)-g_\varepsilon(s_1,x)|}{1+x}\ \mathrm{d}x \\
& \qquad + R^{-\lambda} \int_R^\infty x^{1+\lambda} \left[ g_\varepsilon(s_2,x) + g_\varepsilon(s_1,x) \right]\ \mathrm{d}x \\
& \le 2 R^2 \int_{s_1}^{s_2} \int_0^\infty  \frac{|\partial_s g_\varepsilon(s,x)|}{1+x}\ \mathrm{d}x \mathrm{d}s + 2 R^{-\lambda} \sup_{s\ge 0}\{ M_{1+\lambda}(g_\varepsilon(s))\} \\
& \le 2 R^2 \kappa_{\ref{kap12}}(\varepsilon) (s_2-s_1) + 2 R^{-\lambda} \kappa_{\ref{kap4}}(1+\lambda)\ .
\end{align*}
Choosing $R=(s_2-s_1)^{-1/(\lambda+2)}$ if $s_2-s_1<1$ and $R=1$ otherwise in the previous inequality, we are led to  
\begin{align*}
\int_0^\infty x |g_\varepsilon(s_2,x)-g_\varepsilon(s_1,x)|\ \mathrm{d}x \le 2 \left[ \kappa_{\ref{kap12}}(\varepsilon) + \kappa_{\ref{kap4}}(1+\lambda) \right] \left( (s_2-s_1)^{\lambda/(\lambda+2)} +s_2-s_1 \right)\ ,
\end{align*}
which provides the claimed continuity.
\end{proof}

We have now established all the properties required to prove Proposition~\ref{PropApp}.

\begin{proof}[Proof of Proposition~\ref{PropApp}]
Let $\varepsilon\in (0,\varepsilon_\varrho)$. Owing to Lemma~\ref{Lemd7} and Lemma~\ref{Lemd8}, $\Psi_\varepsilon$ is a dynamical system on $\mathcal{Z}_\varepsilon$ endowed with the norm topology of $X_1$ and $\mathcal{Z}_\varepsilon$ is a non-empty, convex, and compact subset of $X_1$, which is additionally left positively invariant by $\Psi_\varepsilon$. A consequence of Schauder's fixed point theorem, see \cite[Proposition~22.13]{Aman90} or \cite[Proof of Theorem~5.2]{GPV04}, implies that there is $\varphi_\varepsilon\in\mathcal{Z}_\varepsilon$ such that $\Psi_\varepsilon(s;\varphi_\varepsilon)=\varphi_\varepsilon$ for all $s\ge 0$. In other words, $\varphi_\varepsilon$ is a stationary solution to \eqref{d3a}, from which we deduce that it satisfies \eqref{app8}.  Also, since $\varphi_\varepsilon$ lies in $\mathcal{Z}_\varepsilon$, it has the properties \eqref{x0} due to \eqref{d15b}, \eqref{d15c}, \eqref{d15d}, and \eqref{d15f}. 
\end{proof}

\section{Self-similar solutions} \label{sec4}

In this section, we assume that $K$, $a$, and $b$ are coagulation and fragmentation coefficients satisfying \eqref{aaCFC} and we fix $\varrho\in (0,\varrho_\star)$. For $\varepsilon\in (0,\varepsilon_\varrho)$, it follows from Proposition~\ref{PropApp} that there is 
\begin{equation*}
\varphi_\varepsilon \in X_1^+ \cap L^{q_1}((0,\infty),x^{m_1}\mathrm{d}x) \cap W^{1,1}(0,\infty) \cap \bigcap_{m\ge \lambda - 2} X_m
\end{equation*}
satisfying \eqref{app8}, 
\begin{align}
M_1(\varphi_\varepsilon) & = \varrho\ , \label{4.1} \\
\sup_{\varepsilon\in (0,\varepsilon_\varrho)}\left\{ M_{m_0}(\varphi_\varepsilon) \right\} & + \sup_{\varepsilon\in (0,\varepsilon_\varrho)} \left\{ \int_0^\infty x^{m_1} \varphi_\varepsilon(x)^{q_1}\ \mathrm{d}x \right\} < \infty\ , \label{4.2}
\end{align}
and
\begin{equation}
\sup_{\varepsilon\in (0,\varepsilon_\varrho)}\left\{ M_m(\varphi_\varepsilon) \right\} < \infty \label{4.3}
\end{equation}
for all $m\ge 1+\lambda$. Since $q_1>1$ and $m_1<1$, we infer from \eqref{4.1}, \eqref{4.2}, the reflexivity of $L^{q_1}((0,\infty),x^{m_1}\mathrm{d}x)$, and Dunford-Pettis' theorem that there are $\varphi\in X_{m_1}\cap L^{q_1}((0,\infty),x^{m_1}\mathrm{d}x)$ and a subsequence $(\varphi_{\varepsilon_n})_{n\ge 1}$ of $(\varphi_\varepsilon)_{\varepsilon\in (0,\varepsilon_\varrho)}$ such that
\begin{equation}
\varphi_{\varepsilon_n} \rightharpoonup \varphi \;\text{ in }\; X_{m_1} \;\text{ and in }\; L^{q_1}((0,\infty),x^{m_1}\mathrm{d}x)\ . \label{4.5}
\end{equation} 
Combining \eqref{4.2}, \eqref{4.3}, and \eqref{4.5}, we further obtain that $\varphi\in X_{m_0}$ and
\begin{equation}
\varphi\in X_m \;\text{ and }\; \varphi_{\varepsilon_n} \rightharpoonup \varphi \;\text{ in }\; X_m\ , \qquad m>m_0\ . \label{4.6}
\end{equation}
Since the positive cone $X_1^+$ of $X_1$ is weakly closed in $X_1$, we infer from \eqref{4.1} and \eqref{4.6} (with $m=1$) that
\begin{equation}
\varphi\in X_1^+ \;\text{ and }\; M_1(\varphi) = \varrho\ . \label{4.7}
\end{equation} 
We are left with taking the limit $\varepsilon\to 0$ in \eqref{app8}. To this end, consider $\vartheta\in \Theta_1$, the space $\Theta_1$ being defined in \eqref{a6c}, and note that
\begin{equation}
|\vartheta(x)| \le \|\partial_x\vartheta\|_\infty x\ , \qquad x\in [0,\infty)\ . \label{4.75}
\end{equation}
Then $x\mapsto \vartheta(x)/x$ belongs to $L^\infty(0,\infty)$ and it readily follows from \eqref{4.6} (with $m=1$) that
\begin{align}
\lim_{n\to\infty} \int_0^\infty [ \vartheta(x) - x \partial_x \vartheta(x) ] \varphi_{\varepsilon_n}(x)\ \mathrm{d}x & = \lim_{n\to\infty} \int_0^\infty x \left[ \frac{\vartheta(x)}{x} - \partial_x\vartheta(x) \right] \varphi_{\varepsilon_n}(x)\ \mathrm{d}x \nonumber \\
& = \int_0^\infty [ \vartheta(x) - x \partial_x \vartheta(x) ] \varphi(x)\ \mathrm{d}x\ . \label{4.8}
\end{align}
Similarly, $\chi_\vartheta\in L^\infty((0,\infty)^2)$ and we argue as in \cite{Stew89}, see also \cite{BLLxx}, to deduce from \eqref{aa1}, \eqref{aa3}, \eqref{aa11}, and \eqref{4.6} (with $m=\alpha$ and $m=\lambda-\alpha$) that
\begin{align}
& \lim_{n\to\infty} \int_0^\infty \int_0^\infty K(x,y) \chi_\vartheta(x,y) \varphi_{\varepsilon_n}(x) \varphi_{\varepsilon_n}(y)\ \mathrm{d}y \mathrm{d}x \nonumber \\
& \qquad\qquad = \int_0^\infty \int_0^\infty K(x,y) \chi_\vartheta(x,y) \varphi(x) \varphi(y)\ \mathrm{d}y \mathrm{d}x\ . \label{4.9}
\end{align}
Finally, by \eqref{aa1}, \eqref{aa2}, and \eqref{4.6} (with $m=\lambda$), 
\begin{equation}
\big[ y \mapsto y a(y) \varphi_{\varepsilon_n}(y) \big] \rightharpoonup \big[ y \mapsto y a(y) \varphi(y) \big] \;\text{ in }\; L^1(0,\infty)\ , \label{4.10}
\end{equation}
while \eqref{be}, \eqref{app0}, and \eqref{4.75} entail, for $y\in (0,\infty)$,
\begin{align}
\left| \frac{N_{\vartheta,\varepsilon_n}(y)}{y} \right| & \le \frac{|\vartheta(y)|}{y} + \frac{1}{y} \int_0^1 |\vartheta(yz)| B_{\varepsilon_n}(z)\ \mathrm{d}z \nonumber \\
& \le \|\partial_x\vartheta\|_\infty \left( 1 + \int_0^1 z B_{\varepsilon_n}(z)\ \mathrm{d}z \right) = 2  \|\partial_x\vartheta\|_\infty\ . \label{4.11}
\end{align}
Using once more \eqref{4.75}, we obtain, for $y\in (0,\infty)$, 
\begin{align*}
\left| \int_0^y \vartheta(x) b_{\varepsilon_n}(x,y)\ \mathrm{d}x - \int_0^y \vartheta(x) b(x,y)\ \mathrm{d}x \right| & = \left| \int_0^1 \vartheta(yz) [B_{\varepsilon_n}(z)-B(z)]\ \mathrm{d}z \right| \\
& \le y \|\partial_x\vartheta\|_\infty \int_0^1 z |B_{\varepsilon_n}(z)-B(z)|\ \mathrm{d}z\ .
\end{align*}
Hence, thanks to \eqref{app2} (with $(m,p)=(1,1)$),
\begin{equation*}
\lim_{n\to\infty} \frac{1}{y} \int_0^y \vartheta(x) b_{\varepsilon_n}(x,y)\ \mathrm{d}x = \frac{1}{y} \int_0^y \vartheta(x) b(x,y)\ \mathrm{d}x\ ,
\end{equation*}
which implies, in turn,
\begin{equation}
\lim_{n\to\infty} \frac{N_{\vartheta,\varepsilon_n}(y)}{y} = \frac{N_{\vartheta}(y)}{y} \ , \qquad y\in (0,\infty)\ . \label{4.12}
\end{equation}
Due to \eqref{4.10}, \eqref{4.11}, and \eqref{4.12}, we are in a position to apply \cite[Proposition~2.61]{FoLe07} (which is a consequence of Dunford-Pettis' and Egorov's theorems) and conclude that
\begin{equation}
\lim_{n\to\infty} \int_0^\infty a(y) N_{\vartheta,\varepsilon_n}(y) \varphi_{\varepsilon_n}(y)\ \mathrm{d}y = \int_0^\infty a(y) N_{\vartheta}(y) \varphi(y)\ \mathrm{d}y\ . \label{4.13}
\end{equation}
Having established \eqref{4.8}, \eqref{4.9}, and \eqref{4.13}, we may take the limit $\varepsilon\to 0$ in \eqref{app8} and deduce that $\varphi$ satisfies \eqref{x8}, thereby completing the proof of Theorem~\ref{ThmP2}.

\bibliographystyle{siam}
\bibliography{SelfSimilarCF}

\end{document}